\documentclass[twoside]{article}  

\usepackage[margin=2cm,bottom=2.5cm]{geometry} 
\usepackage{amsmath,amsthm,amssymb,latexsym}
\usepackage[v2,tips]{xy}

\newcommand{\vnten}{\overline\otimes}
\newcommand{\proten}{{\widehat{\otimes}}}
\newcommand{\lin}{\operatorname{lin}}
\newcommand{\mf}[1]{\mathfrak{#1}}
\newcommand{\mc}[1]{\mathcal{#1}}
\newcommand{\ip}[2]{{\langle {#1} , {#2} \rangle}}
\newcommand{\op}{{\operatorname{op}}}
\newcommand{\id}{\operatorname{id}}

\theoremstyle{plain}
\newtheorem{proposition}{Proposition}[section]
\newtheorem{theorem}[proposition]{Theorem}

\newtheorem{lemma}[proposition]{Lemma}
\theoremstyle{definition}

\begin{document}

\large
\title{Representing multipliers of the Fourier algebra on non-commutative $L^p$ spaces}
\author{Matthew Daws}
\maketitle

\begin{abstract}
We show that the multiplier algebra of the Fourier algebra on a locally compact group
$G$ can be isometrically represented on a direct sum on non-commutative $L^p$ spaces
associated to the right von Neumann algebra of $G$.  If these spaces are given their
canonical Operator space structure, then we get a completely isometric representation
of the completely bounded multiplier algebra.  We make a careful study of the
non-commutative $L^p$ spaces we construct, and show that they are completely isometric
to those considered recently by Forrest, Lee and Samei; we improve a result of theirs about
module homomorphisms.  We suggest a definition of a Figa-Talamanca--Herz algebra built
out of these non-commutative $L^p$ spaces, say $A_p(\hat G)$.  It is shown that
$A_2(\hat G)$ is isometric to $L^1(G)$, generalising the abelian situation.

\smallskip

\noindent Subject classification: 43A22, 43A30, 46L51 (Primary);
   22D25, 42B15, 46L07, 46L52 (Secondary).

\noindent Keywords: Multiplier, Fourier algebra, non-commutative $L^p$ space,
   complex interpolation.
\end{abstract}

\section{Introduction}

The Fourier algebra $A(G)$ is, for a locally compact group $G$, the space of
coefficient functionals $s\mapsto (\lambda(s)\xi|\eta)$ for $s\in G$, where
$\xi,\eta\in L^2(G)$.  Here
$\lambda$ denotes the left-regular representation of $G$ on $L^2(G)$.  For an
abelian group, $A(G)$ is nothing but the Fourier transform of $L^1(\hat G)$,
where $\hat G$ is the Pontryagin dual of $G$.  Eymard defined $A(G)$ for general
$G$ in \cite{eymard}.  We can also identify $A(G)$ as the predual of the group
von Neumann algebra $VN(G)$, see \cite[Chapter~VII, Section~3]{tak2}.

In this paper we shall be interested in the multiplier algebra of $A(G)$.  This
can either be thought of abstractly as the double centraliser algebra (see
\cite{johnson}) of $A(G)$, or, as $A(G)$ is a regular algebra of functions on
$G$, as the space of continuous functions $f$ such that $fa\in A(G)$ for
each $a\in A(G)$, see \cite{spronk} for example.  There is now much evidence
that $A(G)$ is often best viewed as an \emph{operator space}, when given the
standard operator space structure as the predual of $VN(G)$.  Then it is natural
to consider only the \emph{completely bounded} multipliers, leading to $M_{cb}A(G)$
(see \cite{spronk} or \cite{DcH}).  In \cite{neufang} a representation of
$M_{cb}A(G)$ on $\mc{CB}(B(L^2(G)))$ was defined, extending a representation of $M(G)$
defined in \cite{gha}.  It was shown that these representations are commutants of
each other, hence in some sense extending Pontryagin duality.  Similar ideas
were considered for Kac algebras in \cite{KR} and have been extended (along
with the commutation ideas) to Locally Compact Quantum Groups in \cite{jnr}.

In is shown in \cite{DcH} that both $MA(G)$ and $M_{cb}A(G)$ are dual spaces, in
such a way that the algebra products are separately weak$^*$-continuous (so these
are \emph{dual Banach algebras}); see also \cite[Section~6.2]{spronk}.  Now,
$\mc{CB}(B(L^2(G)))$ is also a dual Banach algebra, and the representation of
$M_{cb}A(G)$ constructed in \cite{neufang} is weak$^*$-weak$^*$-continuous.
However, it was shown in \cite[Corollary~3.8]{daws}
(and extended in \cite{uygul} to the completely bounded case) that a dual Banach
algebra $\mc A$ admits an isometric, weak$^*$-weak$^*$-continuous representation on
$\mc{B}(E)$ for some \emph{reflexive} Banach space $E$.  The space $E$ is built as the
large direct sum of real interpolation spaces, and is rather abstract.

In this paper, we shall show that we can represent $MA(G)$ on a direct sum of
non-commutative $L^p$ spaces associated to $VN(G)$; we can also represent $M_{cb}A(G)$
on the same space, if it is equipped with the canonical operator space structure.
Indeed, our construction is motivated by that of Young in \cite{young}; as Young didn't
consider multipliers, we sketch his ideas in Section~\ref{sec:first} below.

Once we have motivated looking at (non-commutative) $L^p$ spaces, we discuss weights
on $VN(G)$ and non-commutative $L^p$ for (possibly) non-semifinite von Neumann algebras
in Section~\ref{sec:second}.  This will involve introducing the complex interpolation method.
In Section~\ref{sec:third} we apply these ideas to the Fourier algebra, leading to a scale
of spaces $L^p(\hat G)$, for $1<p<\infty$, which are $A(G)$-modules.  We make a careful
study of these spaces, and prove some approximation results which allow us to work with
functions instead of abstract operators in the von Neumann algebra.  With this perspective,
the $A(G)$-module actions are just point-wise multiplication of functions.
We show that our spaces are (completely) isometrically isomorphic
to the two families of spaces constructed in \cite[Section~6]{fls}.  We think that our
construction is easier and more natural than that of \cite{fls}, although we have to
worry more about the details of the complex interpolation method.  The payoff is that,
for example, we can easily extend a cohomological result from \cite{fls}, which we can
show to hold for all values of $p$ (and not just $p\geq 2$).  

In Section~\ref{sec:rep_ag} we prove our representation result.  Let $p_n\rightarrow 1$
in $(1,\infty)$, and let $E$ be the $\ell^2$ direct sum of the spaces $L^{p_n}(\hat G)$.
Then $MA(G)$ is weak$^*$-weak$^*$-continuously isometric to the idealiser of $A(G)$ in
$\mc B(E)$.  If we equip $E$ with the canonical operator space structure, then $M_{cb}A(G)$
is weak$^*$-weak$^*$-continuously completely isometric to the idealiser of $A(G)$ in
$\mc{CB}(E)$.  As arguments involving multipliers often using bounded approximate identities,
it's worth stressing that our results hold for all locally compact groups $G$.  As hinted
at in Section~\ref{sec:first}, Figa-Talamanca--Herz algebras make a natural appearance, and
with our new tools, we define a notion of what $A_p(\hat G)$ should be for a non-abelian
group $G$.  We show that $A_2(\hat G)$ is canonically isometric to $L^1(G)$, but we have
been unable to decide if $A_p(\hat G)$ is always an algebra.

For Banach algebra notions, we follow \cite{dales} and \cite{palmer}; we always
write $E^*$ for the dual of a Banach or Operator space $E$, reserving the notation
$A'$ for the commutatant.  We shall only use standard facts about Operator spaces,
for which we refer the reader to \cite{ER} and \cite{pisier}.  In the few places where
we use matrix calculations, we shall simply write $\|\cdot\|$ for the norm on
$\mathbb M_n(E)$, for any $n$.

\section{Group convolution algebras}\label{sec:first}

In this section we quickly review Young's construction in \cite[Theorem~4]{young}, as
applied to multipliers.  Let $G$ be a locally compact group, and consider the group
convolution algebra $L^1(G)$.  The multiplier algebra of $L^1(G)$ can be isometrically
isomorphically identified with $M(G)$, the measure algebra of $G$.  This is Wendel's
theorem, \cite{wendel} or \cite[Theorem~3.3.40]{dales}.

Let $(p_n)$ be some sequence in $(1,\infty)$ converging to $1$.  Let $E$ be the direct
sum, in an $\ell^2$ sense, of the spaces $L^{p_n}(G)$.  To be exact, $E$ consists of
sequences $(\xi_n)$ where, for each $n$, $\xi_n\in L^{p_n}(G)$, with
\[ \|(\xi_n)\| := \Big( \sum_n \|\xi_n\|^2_{p_n} \Big)^{1/2} < \infty. \]
Thus $E$ is reflexive.
Then $M(G)$ acts contractively on each $L^{p_n}(G)$ space by convolution, and hence also
on $E$, leading to a contractive homomorphism $\theta:M(G)\rightarrow \mc B(E)$.

\begin{theorem}\label{thm:for_measure}
With notation as above, $\theta$ is isometric and weak$^*$-weak$^*$-continuous.
\end{theorem}

We first introduce some further concepts.  We write $\proten$ for the (completed)
projective tensor product (see \cite[Appendix~A3]{dales} for example).  For any
reflexive Banach space $F$, we thus have that $\mc B(F) = (F \proten F^*)^*$.
Let $\lambda_p:L^1(G)\rightarrow\mc B(L^p(G))$ be the left-regular representation, and
let $(\lambda_p)_*:L^p(G)\proten L^{p'}(G) \rightarrow L^\infty(G)$ be the
adjoint.  Here $p'$ is the conjugate index to $p$, so that $L^p(G)^* = L^{p'}(G)$.  For
$a\in L^1(G), \xi\in L^p(G)$ and $\eta\in L^{p'}(G)$, we see that
\begin{align*} \ip{(\lambda_p)_*(\xi\otimes\eta)}{a} &= \ip{\eta}{\lambda_p(a)(\xi)}
= \int_G \int_G \eta(t) a(s) \xi(s^{-1}t) \ ds \ dt
= \ip{\omega_{\xi,\eta}}{a}.
\end{align*}
Here $\omega_{\xi,\eta}$ denotes the function $s\mapsto \int_G \xi(s^{-1}t)\eta(t) \ dt$.
Thus $\omega_{\xi,\eta}$ is a member of the Figa-Talamanca--Herz algebra $A_p(G)$,
identified as a subalgebra of $C_0(G)\subseteq L^\infty(G)$.  For further details see
\cite{herz1,herz2}.

This then suggests an abstract way to define $\tilde\theta:M(G)\rightarrow\mc B(L^p(G))$,
namely
\[ \ip{\eta}{\tilde\theta(\mu)(\xi)} = \ip{\mu}{\omega_{\xi,\eta}}
\qquad (\mu\in M(G), \xi\in L^p(G), \eta\in L^{p'}(G)). \]
By the above calculation, this extends $\theta$.  Furthermore, if $\xi,\eta\in C_{00}(G)$,
the space of compactly support continuous functions, then $\xi\in L^p(G)$, $\eta\in
L^{p'}(G)$, and for $\mu\in M(G)$ we see that
\[ \ip{\eta}{\tilde\theta(\mu)(\xi)} = \int_G \int_G \xi(s^{-1}t) \eta(t) \ dt
\ d\mu(s) = \ip{\eta}{\mu*\xi}, \]
where $\mu*\xi$ has the unambiguous meaning of $\mu$ convolved with $\xi$.  As such
$\xi$ and $\eta$ are dense, we are justified in saying that $\tilde\theta$ is simply
the convolution action of $M(G)$ on $L^p(G)$.

\begin{proof}[Proof of Theorem~\ref{thm:for_measure}]
Consider the adjoint map $\theta_*:E\proten E^*\rightarrow M(G)^*$ given by
\[ \ip{\theta_*(\xi\otimes\eta)}{\mu} = \ip{\eta}{\theta(\mu)(\xi)}
= \sum_n \ip{\eta_n}{\tilde\theta(\mu)(\xi_n)}
= \sum_n \ip{\mu}{\omega_{\xi_n,\eta_n}}, \]
where $\xi = (\xi_n)\in E, \eta=(\eta_n)\in E^*$ and $\mu\in M(G)$.  In particular,
$\theta_*$ maps into $C_0(G)$, the predual of $M(G)$, so that $\theta$ is
weak$^*$-weak$^*$-continuous.

For $f,g\in C_{00}(G)$, we have that $\omega_{f,g} = g * \check f$ as functions,
where $\check f(s) = f(s^{-1})$ for $s\in G$.  Furthermore, we have that
\[ \lim_{p\rightarrow 1} \|f\|_p = \|f\|_1, \quad \lim_{p'\rightarrow\infty}
\|g\|_{p'} = \|g\|_\infty. \]
For any $g\in C_{00}(G)$ and $\epsilon>0$, we can find some $f\in C_{00}(G)$
with $\|f\|_1=1$ and $\|g*\check f - g\|_\infty < \epsilon$ (for example, see the proof
of \cite[Lemma~3.3.22]{dales}).  As $p_n\rightarrow 1$, we can find $n$ with
$\|g\|_{p_n'} < (1+\epsilon)\|g\|_\infty$ and $\|f\|_{p_n} < 1+\epsilon$.  It
follows that
\[ |\ip{\mu}{g}| \geq |\ip{\mu}{\omega_{f, g}}| - \epsilon\|\mu\|, \]
and that
\[ \|\omega_{f,g}\|_{A_{p_n}(G)} \leq \|f\|_{p_n} \|g\|_{p_n'}
< (1+\epsilon)^2 \|g\|_\infty. \]
By taking suitable supremums, it now follows easily that $\theta$ is an isometry.
\end{proof}

For a Banach algebra $\mc A$, we say that $\mc A$ is \emph{faithful} if
for $a\in\mc A$, when $bac=0$ for all $b,c\in\mc A$, then $a=0$.  We shall always
assume that our algebras are faithful: notice that if $\mc A$ is unital, or has
an approximate identity, then $\mc A$ is faithful.
A pair $(L,R)$ of linear maps $A\rightarrow A$ is a \emph{multiplier}
(or \emph{centraliser}) if
\[ L(ab) = L(a)b, \quad R(ab) = aR(b), \quad
aL(b) = R(a)b \qquad (a,b\in\mc A). \]
The Closed Graph Theorem then shows that $L,R\in\mc B(\mc A)$.  For further
details see \cite{johnson}, \cite{dales} or \cite[Section~1.2]{palmer}.
Indeed, \cite[Theorem~1.2.4]{palmer} shows that if $L,R:\mc A\rightarrow\mc A$ are
any maps with $aL(b) = R(a)b$ for $a,b\in\mc A$,
then $(L,R)$ is already a multiplier.  Let $M(\mc A)$ be the space of multipliers,
normed by embedding into $\mc B(\mc A)\times\mc B(\mc A)$, and made into an algebra
for the product $(L,R)(L',R') = (LL', R'R)$.  Notice that $\mc A$ embeds (as $\mc A$
faithful) into $M(\mc A)$ by $a\mapsto (L_a,R_a)$ where $L_a(b)=ab, R_a(b)=ba$ for
$a,b\in\mc A$.

Then Wendel's Theorem tells us that for $(L,R)\in M(L^1(G))$ there exists a unique
$\mu\in M(G)$ such that $L(a) = \mu a$ and $R(a) = a\mu$ for $a\in L^1(G)$.  Indeed,
from the proof of \cite[Theorem~3.3.40]{dales}, we have that $\mu$ is the weak$^*$-limit
of $(L(e_\alpha))$ in $M(G)$, where $(e_\alpha)$ is a bounded approximate identity
for $L^1(G)$.  It is then easy to show that $L(a) = \mu a$ for $a\in L^1(G)$.  Notice
then that $R(a)b = aL(b) = a (\mu b) = (a\mu) b$ for $a,b\in L^1(G)$, so as $L^1(G)$
is faithful, $R(a) = a\mu$ as required.

\begin{theorem}\label{thm:ideal_meas}
With notation as above, the image of $\tilde\theta:M(G)\rightarrow\mc B(E)$ is
exactly the \emph{idealiser} of $\theta(L^1(G))$, namely
\[ \mc I = \{ T\in\mc B(E) : T\theta(a), \theta(a)T\in\theta(L^1(G)) \ (a\in L^1(G)) \}. \]
\end{theorem}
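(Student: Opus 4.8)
The plan is to prove the two inclusions $\theta(M(G))\subseteq\mc I$ and $\mc I\subseteq\theta(M(G))$ separately, where $\theta$ denotes the representation of $M(G)$ on $E$; the first inclusion is routine and the second rests on Wendel's Theorem together with a non-degeneracy argument.

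For the first inclusion, fix $\mu\in M(G)$ and $a\in L^1(G)$. Since $\theta$ is an algebra homomorphism and $M(G)$ is the multiplier algebra of $L^1(G)$, we have $\theta(\mu)\theta(a)=\theta(\mu*a)$ with $\mu*a\in L^1(G)$, and likewise $\theta(a)\theta(\mu)=\theta(a*\mu)\in\theta(L^1(G))$. Hence $\theta(\mu)\in\mc I$.

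For the converse, let $T\in\mc I$. As $\theta$ is isometric by Theorem~\ref{thm:for_measure}, hence injective, for each $a\in L^1(G)$ there are unique elements $L(a),R(a)\in L^1(G)$ with $\theta(L(a))=T\theta(a)$ and $\theta(R(a))=\theta(a)T$. First I would check that $(L,R)$ is a multiplier of $L^1(G)$: using that $\theta$ is an injective homomorphism, the relations $L(ab)=L(a)b$, $R(ab)=aR(b)$ and $aL(b)=R(a)b$ each follow by cancelling $\theta$ from identities such as $\theta(L(ab))=T\theta(a)\theta(b)=\theta(L(a)b)$ and $\theta(aL(b))=\theta(a)T\theta(b)=\theta(R(a)b)$. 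By Wendel's Theorem there is then a unique $\mu\in M(G)$ with $L(a)=\mu*a$ for all $a\in L^1(G)$, so that $\theta(\mu)\theta(a)=\theta(\mu*a)=\theta(L(a))=T\theta(a)$; that is, $(\theta(\mu)-T)\theta(a)=0$ for every $a\in L^1(G)$.

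It remains to deduce $T=\theta(\mu)$, and this is where the only real work lies, via non-degeneracy of the action. Let $(e_\alpha)$ be a bounded approximate identity for $L^1(G)$. On each summand $L^{p_n}(G)$ the convolution action is non-degenerate, $\|e_\alpha*\xi_n-\xi_n\|_{p_n}\to 0$ for $\xi_n\in L^{p_n}(G)$; since $(e_\alpha)$ is norm-bounded, the operators $\theta(e_\alpha)$ are uniformly bounded on $E$, and as the sequences with finitely many non-zero entries are dense in $E$, an $\epsilon/3$ argument gives $\theta(e_\alpha)\xi\to\xi$ for every $\xi\in E$. Applying the identity above with $a=e_\alpha$ and passing to the limit yields $(\theta(\mu)-T)\xi=\lim_\alpha(\theta(\mu)-T)\theta(e_\alpha)\xi=0$ for all $\xi\in E$, whence $T=\theta(\mu)\in\theta(M(G))$. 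The main obstacle is precisely this last step: one must ensure that the strong convergence $\theta(e_\alpha)\to\id_E$ survives the passage from the individual $L^{p_n}(G)$ spaces to their $\ell^2$ direct sum $E$, for which uniform boundedness of the approximate identity is essential.
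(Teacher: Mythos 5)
Your proof is correct and follows essentially the same route as the paper's: the easy inclusion first, then for $T\in\mc I$ defining $(L,R)$ via injectivity of $\theta$, invoking Wendel's Theorem to produce $\mu$, and concluding $T=\tilde\theta(\mu)$ from non-degeneracy of the $L^1(G)$-action on $E$. The only cosmetic differences are that you verify all three multiplier identities directly (where the paper checks only $aL(b)=R(a)b$ and cites \cite[Theorem~1.2.4]{palmer}), and that you make the paper's closing density assertion explicit via a bounded approximate identity argument on the $\ell^2$-sum.
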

\begin{proof}
For $\mu\in M(G)$, we have $\tilde\theta(\mu)\theta(a) = \theta(\mu a)$
and $\theta(a) \tilde\theta(\mu) = \theta(a \mu)$ for $a\in L^1(G)$, so that
$\tilde\theta(M(G)) \subseteq \mc I$.

Conversely, let $T\in\mc I$ and define $L,R:L^1(G)\rightarrow L^1(G)$ by
\[ L(a) = \theta^{-1}\big( T \theta(a) \big), \quad
R(a) = \theta^{-1}\big( \theta(a) T \big) \qquad (a\in L^1(G)), \]
which makes sense, as $\theta$ is injective onto its range.  For $a,b\in L^1(G)$
we see that $\theta(a) \theta(L(b)) = \theta(a) T\theta(b) = \theta(R(a)) \theta(b)$,
so that $aL(b) = R(a)b$.  Thus $(L,R)\in M(L^1(G))$.  Hence there exists $\mu\in M(G)$
with $L(a) = \mu a$ for $a\in L^1(G)$, so that $\tilde\theta(\mu)\theta(a)
= T\theta(a)$ for $a\in L^1(G)$.

By the construction of $E$, we see that $\{ \theta(a)\xi : a\in L^1(G), \xi\in E \}$
is linearly dense in $E$, from which it follows that $T = \tilde\theta(\mu)$,
completing the proof.
\end{proof}

Notice that we implicitly used the Closed Graph Theorem, in invoking
\cite[Theorem~1.2.4]{palmer}.  In the completely bounded setting, this would not be
available to us, and indeed, it is unclear to the author if a direct analogue of
this result would be true.  However, if $\mc A$ is commutative (or has a bounded
approximate identity) that $L$ and $R$ are closely related, allowing a modification
of the proof to work, see Theorem~\ref{thm:mcb_rep} below.  In relation to this, it is
interesting to note that \cite{jnr} works with \emph{one-sided} multipliers (or centralisers).

It is classical that $L^p(G)$ can be described as a complex interpolation space
between $L^1(G)$ and $L^\infty(G)$; see below for definitions, or \cite[Chapter~4]{bergh}.
We can recover the action of $L^1(G)$ on $L^p(G)$ by interpolation, but some care
is needed.  Indeed, obviously $L^1(G)$ is an $L^1(G)$-bimodule over itself, and so
by duality, $L^\infty(G)$ is an $L^1(G)$-bimodule.  However, notice that the resulting
left action of $L^1(G)$ on $L^\infty(G)$ is \emph{not} the usual convolution action.
With this in mind, the constructions in Section~\ref{sec:third} below should
appear less artificial.

\section{Non-commutative $L^p$ spaces}\label{sec:second}

In this section we sketch the complex interpolation approach to non-commutative
$L^p$ spaces, see \cite{terp} and \cite{izumi}.

\subsection{Weights on group von Neumann algebras}

For a locally compact group $G$, let $\lambda$ and $\rho$ be, respectively,
the left- and right-regular representations, defined by
\[ \big(\lambda(s)\xi\big)(t) = \xi(s^{-1}t), \quad
\big(\rho(s)\xi\big)(t) = \rho(ts) \nabla(s)^{1/2} \qquad (\xi\in L^2(G),
s,t\in G). \]
Here $\nabla$ is the modular function on $G$.  For $f\in L^1(G)$, we shall
write $\lambda(f)$ and $\rho(f)$ for the operators induced by integration, for
example
\[\big(\rho(f)\xi\big)(s) = \int_G f(t) \xi(st) \nabla(t)^{1/2} \ dt
\qquad (\xi\in L^2(G)). \]

Then the group von Neumann algebra
$VN(G)$ is the von Neumann algebra generated by $\lambda$, so $VN(G) = \lambda(G)''$.
Similarly, the right group von Neumann algebra, denoted here by $VN_r(G)$, is
generated by $\rho$.  We have that $VN(G)'=VN_r(G)$ and $VN_r(G)'=VN(G)$, see
\cite[Chapter~VII, Section~3]{tak2}.

An alternative way to construct $VN(G)$ is to start with $C_{00}(G)$, considered
as a left Hilbert algebra.  The inner-product is inherited from $L^2(G)$, the
product is convolution, and the involution is $f^\sharp(s) = \overline{f(s^{-1})}
\nabla(s)^{-1}$ for $f\in C_{00}(G), s\in G$.  See \cite{tak2} or \cite{sz}
for further details on left Hilbert algebras.  One word of caution: for
$f\in C_{00}(G)$ (or more generally, for \emph{right bounded} elements of
$L^2(G)$) we can define $\pi_r(f)\in VN_r(G)$ (using the notation of \cite{tak2}).
This is \emph{not} equal to $\rho(f)$; we have $\pi_r(f) = \rho(K(f))$ for $K$
defined below.

At this point, we shall stress that henceforth, for functions $a,b$ on $G$,
we denote the convolution product by $ab$ (when this makes sense)
and the point-wise product by $a\cdot b$.
An exception is that $\nabla$ always acts by point-wise multiplication.

The left Hilbert algebra leads naturally to a weight $\varphi$ on $VN(G)$.  This
weight is explored in detail by Haagerup in \cite[Section~2]{haa}.
We let $\mf n_\varphi = \{ x\in VN(G) : \varphi(x^*x)<\infty \}$ and
$\mf m_\varphi = \lin \mf n_\varphi^* \mf n_\varphi$, and extend $\varphi$ to
$\mf m_\varphi$ in the usual way.  Let us just note that
\[ \varphi(\lambda(f)) = f(e_G) \qquad (f\in C_{00}(G)^2), \]
where $e_G$ is the unit of $G$, and $C_{00}(G)^2 = \lin \{ fg : f,g\in C_{00}(G) \}$.

Let $(\pi, H, \Lambda)$ be the GNS construction for $(VN(G),\varphi)$.  We may
hence identify $H$ with $L^2(G)$ by $\Lambda(\lambda(f)) = f$ for $f\in C_{00}(G)$
(or more generally for left bounded $f\in L^2(G)$).  Henceforth we shall drop $\pi$
and always regard $VN(G)$ as acting on $L^2(G)$.
The \emph{modular conjugation} is the map
\[ J:L^2(G)\rightarrow L^2(G), \quad J\xi(s) = \overline{\xi(s^{-1})} \nabla(s)^{-1/2}
\qquad (\xi\in L^2(G), s\in G). \]
We define a linear version of $J$ to be $K$, where $K(\xi) = J(\overline{\xi})$ for
$\xi\in L^2(G)$.  We define the ``check map'' by $\check\xi(s) =\xi(s^{-1})$,
so $K\xi = \check\xi \nabla^{-1/2}$.
We have that $VN_r(G) = VN(G)' = JVN(G)J$, and
\[ \lambda(f) = J \rho(\overline{f}) J = K \rho(f) K \qquad (f\in L^1(G)). \]
The \emph{modular operator} is given by point-wise multiplication
by $\nabla$, and this leads to the \emph{modular automorphism group}
$(\sigma_t)_{t\in\mathbb R}$ given by $\sigma_t(\cdot) = \nabla^{it} (\cdot) \nabla^{-it}$.

We shall pick a canonical choice of weight $\varphi'$ on $VN_r(G)$ by
\[ \varphi'(x) = \varphi(Jx^*J) \qquad (x\in VN_r(G)^+). \]
Then $\mf m_{\varphi'} = J \mf m_\varphi J$ and the formula above defines $\varphi'$
on $\mf m_{\varphi'}$.  Let $(\pi', H', \Lambda')$ be the GNS construction for $\varphi'$.
We can identify $H'$ with $H$ by $\Lambda'(x) = J\Lambda(JxJ)$ for $x\in \mf n_{\varphi'}
= J \mf n_\varphi J$.  Hence we identify $H'$ with $L^2(G)$ by
\[ \Lambda'(\rho(f)) = J\Lambda(\lambda(\overline{f})) = K(f)
\qquad (f\in C_{00}(G)). \]
Again, we suppress $\pi'$ and regard $VN_r(G)$ as acting on $L^2(G)$.  Then the modular
conjugation for $\varphi'$ is simply $J$.  The modular automorphism group for
$\varphi'$ is $(\sigma'_t)_{t\in\mathbb R}$, and this is given by $\sigma'_t(x)
= J\sigma_t(JxJ)J$ for $x\in VN_r(G)$.  Some care is required when analytically
extending this to complex values; indeed, we have
$\sigma'_z(x) = J\sigma_{\overline z}(JxJ)J$ for analytic $x$ and $z\in\mathbb C$.
Consequently
\[ \sigma'_z(\rho(f)) = \rho\big( \nabla^{-i\overline z} f \big)
\qquad (f\in C_{00}(G), z\in\mathbb C). \]

\subsection{Non-commutative $L^p$ spaces}

There is a long history to non-commutative $L^p$ spaces, for which we refer the reader
to \cite{PZ}.  For a von Neumann algebra $\mc M$ with a finite normal trace
$\tau$, we can simply let $L^p(\mc M,\tau)$ be the completion of $\mc M$ with
respect to the norm $\|x\|_p = \tau((x^*x)^{p/2})$, for $1\leq p<\infty$.  Similar
remarks apply to semi-finite traces, although the framework of ``measurable operators''
gives a realisation of the completed space.  See \cite[Chapter~IX, Section~2]{tak2}
for further details.

For a general von Neumann algebra which might only admit a weight, Haagerup introduced
a crossed-product construction of a non-commutative $L^p$ space in \cite{haa_noncomm}.
Building on work of Connes, Hilsum provided a spatial definition of a non-commutative
$L^p$ space in \cite{hilsum}, and showed that the resulting space was isometrically
isomorphic to Haagerup's.  By analogy with the commutative case, we might expect the
complex interpolation method to play a role.  In \cite{kosaki}, Kosaki provided a
construction of a non-commutative $L^p$ space associated to a von Neumann algebra with
a finite weight (that is, a normal state) using the complex interpolation method.
He showed that his space is isometrically isomorphic to Haagerup's.  In \cite{terp}, Terp
extended a special case of Kosaki's construction to the semi-finite case, and she showed
that her $L^p$ space is isometrically isomorphic to Hilsum's (and hence to Haagerup's).

We shall instead follow Izumi's construction in \cite{izumi}, which simultaneously
generalises Kosaki's and Terp's constructions.  Of particular interest is that in
\cite{izumi2}, Izumi makes a detailed study of his spaces, introducing bilinear and
sesquilinear products, and showing that his $L^2$ spaces are canonically isometrically
isomorphic to the standard Hilbert space constructed from the underlying weight.
As such, Izumi's constructions are self-contained (although we note that,
technically, he relies upon Terp's work in a proof in \cite{izumi}).

First let us define the complex interpolation method.  See \cite{bergh},
\cite[Section~2.7]{pisier} for further details.  A \emph{compatible couple} of
Banach spaces is a pair $(E_0,E_1)$ continuously embedded into a Hausdorff topological
vector space $X$.  We can then make sense of the spaces $E_0\cap E_1$ and $E_0+E_1$,
and define norms on them by
\begin{gather*} \|x\| = \max( \|x\|_{E_0}, \|x\|_{E_1} ) \qquad (x\in E_0\cap E_1), \\
\|x\| = \inf\{ \|a\|_{E_0}+\|b\|_{E_1} : x=a+b, a\in E_0, b\in E_1\}
\qquad (x\in E_0+E_1).
\end{gather*}
We need $X$ to be Hausdorff to ensure that we get a norm on $E_0+E_1$.  However,
once we can form $E_0+E_1$, in what follows, we can always just replace $X$ by
$E_0+E_1$.

Let $\mc S = \{ z=x+iy\in\mathbb C : 0\leq x\leq 1, y\in\mathbb R\}$ and
$\mc S_0 = \{ z=x+iy\in\mathbb C : 0< x< 1, y\in\mathbb R\}$.
We let $\mc F$ be the space of functions $f:\mc S\rightarrow E_0+E_1$ such that:
\begin{enumerate}
\item $f$ is continuous and bounded, and analytic on $\mc S_0$;
\item for $j=0,1$, we have that $\mathbb R\mapsto E_j; t\mapsto f(j+it)$
is continuous, bounded, and tends to $0$ as $|t|\rightarrow\infty$.
\end{enumerate}
For more details on vector-valued analytic functions, see \cite[Appendix]{tak2}
for example.  We give $\mc F$ a norm by setting
\[ \|f\| = \max_{j=0,1} \sup_{t\in\mathbb R} \|f(j+it)\|_{E_0}
\qquad (f\in\mc F). \]
This is a norm, and then $\mc F$ becomes a Banach space.

For $0\leq\theta\leq 1$, we define $(E_0,E_1)_{[\theta]}$ to be the subspace of
$E_0+E_1$ consisting of those $x$ such that $x=f(\theta)$ for some $f\in\mc F$,
together with the quotient norm
\[ \|x\|_{[\theta]} = \inf\{ \|f\| : f\in\mc F, f(\theta)=x \}. \]
The following is proved in \cite[Theorem~4.1.2]{bergh}.

\begin{theorem}\label{thm:inter}
With notation as above, we have norm decreasing inclusions $E_0\cap E_1
\rightarrow (E_0,E_1)_{[\theta]} \rightarrow E_0+E_1$.  Let $(F_0,F_1)$ be
another pair of compatible Banach spaces, and let $T:E_0+E_1\rightarrow F_0+F_1$
be a linear map such that for $j=0,1$, $T(E_j) \subseteq F_j$ and the restriction
$T:E_j\rightarrow F_j$ is bounded.  Then
\[ T\big( (E_0,E_1)_{[\theta]} \big) \subseteq (F_0,F_1)_{[\theta]},
\quad \|T\| \leq \|T:E_0\rightarrow F_0\|^{1-\theta} \|T:E_1\rightarrow F_1\|^\theta. \]
\end{theorem}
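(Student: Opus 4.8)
The plan is to follow the classical argument for the complex interpolation functor, treating the two stated inclusions first and then the interpolation estimate. For the left-hand inclusion $E_0\cap E_1\to (E_0,E_1)_{[\theta]}$, I would show $\|x\|_{[\theta]}\leq\max(\|x\|_{E_0},\|x\|_{E_1})$ by exhibiting a good test function. A constant function fails condition (2) (no decay at infinity), so instead I would set $f_\epsilon(z)=e^{\epsilon(z-\theta)^2}x$ for $\epsilon>0$. This is entire, takes the value $x$ at $\theta$, and on the line $\operatorname{Re} z=j$ we have $|e^{\epsilon(j+it-\theta)^2}|=e^{\epsilon((j-\theta)^2-t^2)}$, which is bounded on $\mc S$ and tends to $0$ as $|t|\to\infty$; hence $f_\epsilon\in\mc F$. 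Its norm is $\max_j e^{\epsilon(j-\theta)^2}\|x\|_{E_j}$, and letting $\epsilon\to 0$ gives the claim. For the right-hand inclusion $(E_0,E_1)_{[\theta]}\to E_0+E_1$, given $x=f(\theta)$ I would prove $\|x\|_{E_0+E_1}\leq\|f\|$ by applying a maximum-principle argument to the $(E_0+E_1)$-valued analytic function $f$, using that $\|\cdot\|_{E_0+E_1}\leq\|\cdot\|_{E_j}$ on $E_j$ to bound the boundary values.

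For the interpolation estimate, first note that $T$ is automatically bounded $E_0+E_1\to F_0+F_1$, so $T\circ f$ makes sense and is analytic and bounded on $\mc S$. If $x=f(\theta)$ with $f\in\mc F$, then $(T\circ f)(j+it)=T(f(j+it))\in F_j$ with $\|(T\circ f)(j+it)\|_{F_j}\leq M_j\|f(j+it)\|_{E_j}$, where $M_j:=\|T:E_j\to F_j\|$; in particular $T\circ f$ inherits the boundary continuity and decay and lies in the analogous space $\mc F$ for $(F_0,F_1)$, with $(T\circ f)(\theta)=Tx$. This already shows $Tx\in(F_0,F_1)_{[\theta]}$. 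To upgrade the crude bound $\max_j M_j\|f\|$ to the geometric mean, I would rescale: set $\gamma=\log(M_0/M_1)$ and consider $g(z)=e^{(z-\theta)\gamma}T(f(z))$. Then $g(\theta)=Tx$, $g\in\mc F$, and on the line $\operatorname{Re} z=j$ the scalar factor has modulus $e^{(j-\theta)\gamma}$, so the boundary norm is at most $e^{(j-\theta)\gamma}M_j\|f\|$; the choice of $\gamma$ makes both boundary values equal to $M_0^{1-\theta}M_1^\theta\|f\|$. Taking the infimum over $f$ with $f(\theta)=x$ yields $\|Tx\|_{[\theta]}\leq M_0^{1-\theta}M_1^\theta\|x\|_{[\theta]}$. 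The degenerate cases $M_0=0$ or $M_1=0$ can be treated separately, or by a limiting argument with $M_j$ replaced by $M_j+\delta$.

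The hard part --- and the one analytic ingredient on which both the right-hand inclusion and the rescaling step ultimately rest --- is a vector-valued Hadamard three-lines theorem: if $h$ is continuous and bounded on $\mc S$, analytic on $\mc S_0$, and valued in a Banach space $Y$, with $\sup_t\|h(it)\|\leq M_0$ and $\sup_t\|h(1+it)\|\leq M_1$, then $\|h(\theta)\|\leq M_0^{1-\theta}M_1^\theta$. I would reduce this to the scalar case by Hahn--Banach, applying the classical three-lines lemma to $z\mapsto\ip{\phi}{h(z)}$ for each $\phi\in Y^*$ and taking a supremum over the unit ball. The scalar statement itself follows by applying the maximum modulus principle to $z\mapsto M_0^{z-1}M_1^{-z}\ip{\phi}{h(z)}$ on a large rectangle; here the decay hypothesis in condition (2) of the definition of $\mc F$ is exactly what lets the contributions from the horizontal edges vanish as the rectangle grows, avoiding any Phragm\'en--Lindel\"of subtlety. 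Once this lemma is in hand, the right-hand inclusion is the special case $Y=E_0+E_1$ with $M_0=M_1=\|f\|$, and the geometric-mean interpolation bound is precisely the estimate on the rescaled boundary functions feeding into it.
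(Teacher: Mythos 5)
Your outline is, in substance, the classical Bergh--L\"ofstr\"om argument; the paper itself offers no proof of this statement, simply citing \cite[Theorem~4.1.2]{bergh}, so there is no internal proof to diverge from. Your test function $f_\epsilon(z)=e^{\epsilon(z-\theta)^2}x$, the automatic boundedness of $T$ on $E_0+E_1$, the verification that $g(z)=e^{(z-\theta)\gamma}T(f(z))$ lies in $\mc F(F_0,F_1)$, and the choice of $\gamma$ equalising the two boundary bounds are all correct. One structural remark: the rescaling step does \emph{not} rest on the three-lines theorem, contrary to your closing sentence --- the inequality $\|Tx\|_{[\theta]}\leq\|g\|_{\mc F}\leq M_0^{1-\theta}M_1^\theta\|f\|$ is immediate from the definition of $\|\cdot\|_{[\theta]}$ as an infimum over test functions. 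Only the right-hand inclusion (and, implicitly, the fact that $\|\cdot\|_{[\theta]}$ is a norm rather than a mere seminorm) genuinely needs the three-lines lemma.

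There is, however, a genuine gap in your proof of that lemma. The horizontal edges of your rectangle are the segments $\{x\pm iR : 0\leq x\leq 1\}$, which lie in the \emph{interior} of the strip, whereas condition (2) in the definition of $\mc F$ gives decay of $\|f(j+it)\|_{E_j}$ only on the two vertical boundary lines $\operatorname{Re}z=0,1$. So the decay hypothesis does not, as you claim, make the horizontal-edge contributions vanish; worse, your lemma is stated for merely bounded $h$, for which no decay holds anywhere, so the rectangle argument as described cannot close. (Uniform decay of $\|f(x+iy)\|_{E_0+E_1}$ as $|y|\to\infty$, uniformly in $x$, is in fact true for $f\in\mc F$, but proving it requires harmonic majorization on the strip or the three-lines lemma itself, so invoking it here would be circular.) The repair is precisely the Phragm\'en--Lindel\"of device you set out to avoid: apply the maximum modulus principle to
\[ z\mapsto e^{\delta(z-\theta)^2}\,M_0^{z-1}M_1^{-z}\,\ip{\phi}{h(z)} \]
on $[0,1]\times[-R,R]$. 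Since $|e^{\delta(z-\theta)^2}|=e^{\delta((x-\theta)^2-t^2)}$, boundedness of $h$ alone forces the horizontal edges to vanish as $R\to\infty$, while the vertical edges are bounded by $e^{\delta\max_j(j-\theta)^2}$; letting $\delta\to 0$ gives $|\ip{\phi}{h(\theta)}|\leq M_0^{1-\theta}M_1^{\theta}$, and Hahn--Banach finishes as you indicate. With that one-line correction your proof is complete.
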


\begin{lemma}\label{lem:when_compat}
With notation as above, for $j=0,1$ let $T_j\in\mc B(E_j,F_j)$.  There exists
$T:E_0+E_1\rightarrow F_0+F_1$ with $T|_{E_j} = T_j$ for $j=0,1$ if, and only if,
$T_0$ and $T_1$ map $E_0\cap E_1$ into $F_0\cap F_1$ and agree on $E_0\cap E_1$.
\end{lemma}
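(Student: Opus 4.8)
The plan is to prove the two implications separately; the forward direction is an immediate consequence of restriction, while the backward direction amounts to building $T$ by hand, with all of the content lying in a well-definedness check. For the forward implication, I would suppose such a $T$ exists and test it on $x \in E_0 \cap E_1$, viewed inside $E_0 + E_1$. Regarding $x$ as lying in $E_0$ gives $T(x) = T_0(x) \in F_0$, while regarding it as lying in $E_1$ gives $T(x) = T_1(x) \in F_1$. Hence the single element $T(x)$ lies in $F_0 \cap F_1$, and simultaneously $T_0(x) = T_1(x)$, which delivers both asserted conclusions at once.

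For the backward implication, I would define $T$ on $E_0 + E_1$ by choosing, for each $x$, any decomposition $x = a + b$ with $a \in E_0$ and $b \in E_1$, and setting $T(x) = T_0(a) + T_1(b)$. Since $T_0(a) \in F_0$ and $T_1(b) \in F_1$, this lands in $F_0 + F_1$. Linearity, and the identities $T|_{E_j} = T_j$ obtained from the decompositions $x = x + 0$ and $x = 0 + x$, are then purely formal once well-definedness is in place.

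The crux, and the only point at which the hypotheses enter, is that $T$ must not depend on the chosen decomposition. If $x = a + b = a' + b'$, then inside $E_0 + E_1$ we have $a - a' = b' - b$; the left-hand side lies in $E_0$ and the right-hand side in $E_1$, so their common value $c$ lies in $E_0 \cap E_1$. The hypothesis that $T_0$ and $T_1$ agree there gives $T_0(c) = T_1(c)$ in $F_0 + F_1$, and expanding $T_0(a) - T_0(a') = T_0(c) = T_1(c) = T_1(b') - T_1(b)$ rearranges to $T_0(a) + T_1(b) = T_0(a') + T_1(b')$, as required. I do not anticipate any genuine obstacle here: the argument is entirely algebraic, the condition that $T_0, T_1$ map $E_0 \cap E_1$ into $F_0 \cap F_1$ is automatic once their values agree in $F_0 + F_1$, and although boundedness of $T$ is not asked for, one reads off in passing that $\|T\| \leq \max(\|T_0\|, \|T_1\|)$.
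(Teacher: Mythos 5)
Your proof is correct and follows essentially the same route as the paper: the same well-definedness check (if $x = a+b = a'+b'$ then $a-a' = b'-b \in E_0\cap E_1$, and agreement of $T_0,T_1$ there forces $T_0(a)+T_1(b) = T_0(a')+T_1(b')$), with the forward direction being the same easy restriction argument the paper dismisses as clear. Your side remarks --- that the intersection-mapping condition is automatic from agreement in $F_0+F_1$, and the bound $\|T\|\leq\max(\|T_0\|,\|T_1\|)$ --- are accurate but do not change the argument.
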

\begin{proof}
If $T_0$ and $T_1$ agree on $E_0\cap E_1$ and map into $F_0\cap F_1$,
then we try to define $T$ by $T(x_0+x_1) = T_0(x_0)+T_1(x_1)$ for $x_0\in E_0,
x_1\in E_1$.  This is well-defined, for if $x_0+x_1 = x_0'+x_1'$ then $x_0-x_0'
= x_1'-x_1\in E_0\cap E_1$ and so $T_0(x_0)-T_0(x_0') = T_1(x_1')-T_1(x_1)
\in F_0\cap F_1$.  The converse is clear.
\end{proof}

There is also a bilinear version, see \cite[Theorem~4.4.1]{bergh}.

\begin{theorem}\label{thm:bilin_int}
Let $(E_0,E_1)$, $(F_0,F_1)$ and $(G_0,G_1)$ be compatible couples, and let
$T:E_0\cap E_1 \times F_0\cap F_1 \rightarrow G_0\cap G_1$ be a bilinear map such
that for some constants $M_0,M_1$, we have
\[ \|T(x_j,y_j)\|_{G_j} \leq M_j \|x_j\|_{E_j} \|y_j\|_{F_j}
\qquad (j=0,1, x_j\in E_j, y_j\in F_j). \]
For $0<\theta<1$, there is a bilinear map
\[ T_\theta:(E_0,E_1)_{[\theta]} \times (F_0,F_1)_{[\theta]}
\rightarrow (G_0,G_1)_{[\theta]}, \]
which extends $T$, and which is bounded by $M_0^{1-\theta} M_1^\theta$.
\end{theorem}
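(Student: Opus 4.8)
The plan is to reduce to the case of arguments lying in the intersections $E_0\cap E_1$ and $F_0\cap F_1$, where $T$ is genuinely defined, and then to obtain $T_\theta$ by continuous bilinear extension. Recall from \cite[Theorem~4.2.2]{bergh} that $E_0\cap E_1$ is dense in $(E_0,E_1)_{[\theta]}$ for $0<\theta<1$, and likewise for the couple $(F_0,F_1)$, while $(G_0,G_1)_{[\theta]}$ is complete. Thus it suffices to show that, for $x\in E_0\cap E_1$ and $y\in F_0\cap F_1$, the element $T(x,y)$ (which lies in $G_0\cap G_1$) satisfies
\[ \|T(x,y)\|_{[\theta]} \leq M_0^{1-\theta} M_1^\theta \, \|x\|_{[\theta]} \, \|y\|_{[\theta]}; \]
a bounded bilinear map on the dense subspaces $E_0\cap E_1$ and $F_0\cap F_1$ then extends uniquely to $T_\theta$ on the full interpolation spaces with the same bound.

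To prove the displayed estimate, I would first note that the two hypotheses combine to give $\|T(a,b)\|_{G_0\cap G_1} = \max_j \|T(a,b)\|_{G_j} \leq (\max_j M_j)\, \|a\|_{E_0\cap E_1}\|b\|_{F_0\cap F_1}$, so $T$ is bounded bilinear into the intersection $G_0\cap G_1$. Next, given $\epsilon>0$, I would invoke the approximation result underlying \cite[Lemma~4.2.3]{bergh} to choose $f,g\in\mc F$ of the special form $\sum_k e^{\delta z^2+\lambda_k z} a_k$, with each $a_k$ in the relevant intersection, taking values in $E_0\cap E_1$ and $F_0\cap F_1$ respectively for \emph{every} $z\in\mc S$, and with $f(\theta)=x$, $g(\theta)=y$, $\|f\|<\|x\|_{[\theta]}+\epsilon$, $\|g\|<\|y\|_{[\theta]}+\epsilon$. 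Set $h(z)=T(f(z),g(z))$. Since $T$ is bilinear and bounded into $G_0\cap G_1$, and $f,g$ are entire with values in the intersections, $h$ is analytic and bounded on $\mc S$ into $G_0\cap G_1\subseteq G_0+G_1$; on the line $j+i\mathbb R$ we have $\|h(j+it)\|_{G_j}\leq M_j\|f(j+it)\|_{E_j}\|g(j+it)\|_{F_j}$, which tends to $0$ as $|t|\to\infty$ by the Gaussian decay of $f$ and $g$. Hence $h\in\mc F(G_0,G_1)$ and $T(x,y)=h(\theta)\in(G_0,G_1)_{[\theta]}$.

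The crude bound $\|h\|\leq(\max_j M_j)\|f\|\|g\|$ loses the geometric mean, so the final step is a rescaling. Assuming $M_0,M_1>0$ (the degenerate cases force $T\equiv0$ and are trivial), put $\lambda=M_0/M_1$ and $h_\lambda(z)=\lambda^{z-\theta}h(z)$. As $|\lambda^{z-\theta}|=\lambda^{\operatorname{Re} z-\theta}$ is bounded on $\mc S$, we still have $h_\lambda\in\mc F(G_0,G_1)$ with $h_\lambda(\theta)=T(x,y)$, while on the two boundary lines $|\lambda^{j+it-\theta}|=\lambda^{j-\theta}$ converts both constants $M_0$ and $M_1$ into $M_0^{1-\theta}M_1^\theta$. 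This yields $\|h_\lambda\|\leq M_0^{1-\theta}M_1^\theta\|f\|\|g\|$, whence $\|T(x,y)\|_{[\theta]}\leq M_0^{1-\theta}M_1^\theta(\|x\|_{[\theta]}+\epsilon)(\|y\|_{[\theta]}+\epsilon)$; letting $\epsilon\to0$ gives the claimed estimate, and the bilinear extension completes the proof.

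The main obstacle is the very first difficulty: because $T$ is defined only on the intersections, $T(f(z),g(z))$ is a priori meaningless for interior $z$. Everything hinges on having analytic functions $f,g$ valued in $E_0\cap E_1$ and $F_0\cap F_1$ throughout the strip while still witnessing the interpolation norms of $x$ and $y$; securing these (the content of the approximation lemma, and the reason for the exponential factors $e^{\lambda_k z}$, which tilt the boundary norms so as to realise the geometric mean rather than the maximum) is the technical heart of the argument, after which the formation of $h$, its membership in $\mc F$, and the rescaling are routine.
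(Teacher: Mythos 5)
Your proposal is correct, and it coincides with the proof the paper is implicitly relying on: the paper does not prove this statement itself but simply cites \cite[Theorem~4.4.1]{bergh}, and your argument --- density of $E_0\cap E_1$ in $(E_0,E_1)_{[\theta]}$, forming $h(z)=T(f(z),g(z))$ for $f,g$ of the special form $\sum_k e^{\delta z^2+\lambda_k z}a_k$ with values in the intersections, and the rescaling $h_\lambda(z)=\lambda^{z-\theta}h(z)$ with $\lambda=M_0/M_1$ to convert the two boundary constants into the geometric mean $M_0^{1-\theta}M_1^\theta$ --- is precisely the proof given there. The one step you pass over lightly, namely that the special-form functions can be taken to satisfy $f(\theta)=x$ and $g(\theta)=y$ \emph{exactly} while still nearly witnessing the interpolation norms (Lemma~4.2.3 of \cite{bergh} as stated only gives density in $\mc F$, not exact-value approximants), is treated at exactly the same level of detail in Bergh--L\"ofstr\"om's own proof, and you correctly identify it as the technical heart of the argument.
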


Now let $\mc M$ be a von Neumann algebra with normal semi-finite weight $\varphi$.
Let $(H,\Lambda)$ be the GNS construction, where we identify $\mc M$ with a subalgebra
of $\mc B(H)$.  Let $J$ be the modular conjugation, and $\nabla$ the modular operator.
We shall now sketch Izumi's approach to non-commutative $L^p$ spaces.  The idea is to
turn $(\mc M,\mc M_*)$ into a compatible couple; then $L^p(\varphi)$ will be defined as
$(\mc M,\mc M_*)_{[1/p]}$, for $1<p<\infty$.

Let $(H,\Lambda)$ be a GNS construction for $\varphi$, so that $\mf A=\Lambda(\mf n_\varphi
\cap \mf n_\varphi^*)$ is a full left Hilbert algebra in $H$, which generates $\mc M$
and induces $\varphi$.  Let $\mf A_0$ be the maximal Tomita algebra associated to $\mf A$,
see \cite[Chapter~VI, Section~2]{tak2}, and let $\mf a_0 = \Lambda^{-1}(\mf A_0)$.  In particular, each
$x\in\mf a_0$ is analytic for $(\sigma_t)$, and $\Lambda(x)$ is in the domain of $\nabla^\alpha$
for each $\alpha\in\mathbb C$.

For $\alpha\in\mathbb C$, we let $L_{(\alpha)}$ be the collection of those $x\in\mc M$ such
that there exists $\varphi^{(\alpha)}_x\in\mc M_*$ with $\ip{y^*z}{\varphi^{(\alpha)}_x} =
(xJ\nabla^{\overline\alpha}\Lambda(y)|J\nabla^{-\alpha}\Lambda(z))$ for $y,z\in\mf a_0$.
Then $L_{(\alpha)}$ is a subspace of $\mc M$ which contains $\mf a_0^2$, and is hence
$\sigma$-weakly dense.  We norm $L_{(\alpha)}$ by setting $\|x\|_{L_{(\alpha)}} = \max(
\|x\|_{\mc M}, \|\varphi^{(\alpha)}_x\|_{\mc M_*} )$ for $x\in L_{(\alpha)}$.  Let
$i_{(\alpha)}:L_{(\alpha)}\rightarrow\mc M$ be the inclusion map, and let $j_{(\alpha)}:
L_{(\alpha)}\rightarrow\mc M_*$ be the map $x\mapsto \varphi^{(\alpha)}_x$.  These are
contractive injections, and $j_{(\alpha)}$ has norm dense range.  Izumi proves that we
have the following commuting diagram
\[ \xymatrix{ & \mc M \ar[rd]^{j^*_{(-\alpha)}} \\
L_{(\alpha)} \ar[ru]^{i_{(\alpha)}} \ar[rd]_{j_{(\alpha)}} & & L^*_{(-\alpha)} \\
& \mc M_* \ar[ru]_{i^*_{(-\alpha)}} } \]
In particular, we have that $\ip{y}{\varphi_x^{(\alpha)}} = \ip{x}{\varphi_y^{(-\alpha)}}$
for $x\in L_{(\alpha)}$ and $y\in L_{(-\alpha)}$.  By density we have that $i^*_{(-\alpha)}$
and $j^*_{(-\alpha)}$ are injective, and so we can view $(\mc M,\mc M_*)$ as a compatible
couple.  Izumi shows that under this identification, $\mc M\cap\mc M_*$ is precisely $L_{(\alpha)}$.
We finally set
\[ L^p_{(\alpha)}(\varphi) = \big(\mc M,\mc M_*\big)_{[1/p]} \qquad (1<p<\infty). \]
We shall always view $L^p_{(\alpha)}(\varphi)$ as a subspace of $\mc M+\mc M_*$;
consequently, by the commuting diagram and Theorem~\ref{thm:inter}, we have that
$j^*_{(-\alpha)}(x)\in L^p_{(\alpha)}(\varphi)$ for all $x\in L$ and all $p$.

For most of this paper, we shall actually work just with the case $\alpha=0$, which is
exactly the case which Terp considers in \cite{terp}.  Set $L = L_{(0)}$, so we actually
have the stronger property that $x\in L$ when there exists $\varphi_x\in\mc M_*$ with
\[ \ip{y^*z}{\varphi_x} = (Jx^*J\Lambda(z)|\Lambda(y))
= (xJ\Lambda(y)|J\Lambda(z)) \qquad (y,z\in\mf n_\varphi). \]

As shown in \cite{izumi2}, there are bilinear maps which satisfy
\[ \ip{\cdot}{\cdot}_{p,(\alpha)} : L^p_{(\alpha)}(\varphi) \times L^{p'}_{(-\alpha)}(\varphi)
\rightarrow\mathbb C; \quad \ip{j^*_{(-\alpha)}(x)}{j^*_{(\alpha)}(y)}
= \ip{y}{\varphi_x^{(\alpha)}} = \ip{x}{\varphi^{(-\alpha)}_y}, \]
where $1/p + 1/p'=1$.  There are sesquilinear maps which satisfy
\[ (\cdot|\cdot)_{p,(\alpha)} : L^p_{(\alpha)}(\varphi) \times
L^{p'}_{(\overline\alpha)}(\varphi) \rightarrow\mathbb C; \quad
( j^*_{(-\alpha)}(x) | j^*_{(-\overline\alpha)}(y) )_{p,(\alpha)}
= \ip{y^*}{\varphi_x^{(\alpha)}} = \overline{\ip{x^*}{\varphi^{(\overline\alpha)}_x}}. \]
Furthermore, these maps implement dualities between $L^p_{(\alpha)}(\varphi)$ and
$L^{p'}_{(-\alpha)}(\varphi)$, and between $L^p_{(\alpha)}(\varphi)$ and
$L^{p'}_{(\overline\alpha)}(\varphi)$, respectively.  As such, the dual of
$L^p_{(0)}(\varphi)$ can be identified with $L^{p'}_{(0)}(\varphi)$,
both linearly and anti-linearly.

We can identify $L^2_{(-1/2)}(\varphi)$ with $H_\varphi$, the standard GNS space for
$\varphi$.  Indeed, there is an isometric isomorphism
\[ h:H_\varphi \rightarrow L^2_{(-1/2)}(\varphi); \quad
h(\Lambda(x)) = j^*_{(-1/2)}(x) \qquad (x\in\mf n_\varphi). \]
Furthermore, $h$ respects the relevant inner-products, that is
\[ (\xi|\eta) = (h(\xi)|h(\eta))_{2,(-1/2)} \qquad (\xi,\eta\in H_\varphi). \]
We can translate this to other values of $\alpha$ by using the fact that there are
isometric isomorphisms
\[ U_{p,(\beta,\alpha)}: L^p_{(\alpha)}(\varphi) \rightarrow 
L^p_{(\beta)}(\varphi); \quad U_{p,(\beta,\alpha)}(j^*_{(-\alpha)}(x))
= j^*_{(-\beta)}(\sigma_{i(\beta-\alpha)/p}(x))
\qquad (x\in\mf a_0^2, \alpha,\beta\in\mathbb R). \]
Then, again for $\alpha,\beta\in\mathbb R$, we have that
\[ ( U_{p,(\beta,\alpha)}(\xi) | U_{p,(\beta,\alpha)}(\eta) )_{p,(\beta)}
= (\xi|\eta)_{p,(\alpha)} \qquad (\xi,\eta\in L^p_{(\alpha)}(\varphi)). \]
In particular, there is an isometric isomorphism
$k:H_\varphi\rightarrow L^2_{(0)}(\varphi)$ with
\[ k(\Lambda(x)) = j^*_{(0)}(\sigma_{i/4}(x)) \quad (x\in\mf a_0^2), \qquad
(\xi|\eta) = (k(\xi)|k(\eta))_{2,(0)} \quad (\xi,\eta\in H_\varphi). \]

Using convergence theorems for integration, it is easy to show that if $(X,\mu)$ is a
measure space, and $f\in L^1(\mu)\cap L^\infty(\mu)$, then $f\in L^p(\mu)$ for all
$p\in(1,\infty)$, and $\lim_{p\rightarrow 1} \|f\|_p = \|f\|_1$.  The following is
a non-commutative version of this.

\begin{proposition}\label{prop:limit}
With notation as above, let $x\in L$.  Then $\lim_{p\rightarrow 1} \|j^*_{(0)}(x)\|_p
= \|\varphi_x\|$, where $\|\cdot\|_p$ denotes the norm on $L^p_{(0)}(\varphi)$.
\end{proposition}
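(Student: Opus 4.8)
The plan is to observe that $j^*_{(0)}(x)$ is precisely the element of the couple $(\mc M,\mc M_*)_{[1/p]}=L^p_{(0)}(\varphi)$ determined by $x\in L=\mc M\cap\mc M_*$. Indeed, the symmetry $\ip{y}{\varphi_x}=\ip{x}{\varphi_y}$ (the displayed relation at $\alpha=0$) says that $j^*_{(0)}(x)=i^*_{(0)}(\varphi_x)$ in the ambient space $L^*_{(0)}$, so that, as an element of $E_0\cap E_1$ with $E_0=\mc M$ and $E_1=\mc M_*$, its $E_0$-norm is $\|x\|_{\mc M}$ and its $E_1$-norm is $\|\varphi_x\|$. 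The proposition is then the assertion that the complex interpolation norm $\|\cdot\|_{[1/p]}$ converges to the endpoint norm of $E_1=\mc M_*$ as $1/p\to1$, a non-commutative version of $\lim_{p\to1}\|f\|_p=\|f\|_1$; I would prove it by matching a $\limsup$ bound with a $\liminf$ bound.

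For the upper bound I would use the log-convexity of the interpolation norm. Writing $a=j^*_{(0)}(x)$ and $\theta=1/p$, one tests the quotient norm with the $\mc F$-function $f(z)=\exp(\epsilon(z-\theta)^2)\,\lambda^{\,z-\theta}a$, where $\lambda=\|x\|_{\mc M}/\|\varphi_x\|$; the Gaussian factor secures the decay required of members of $\mc F$, and letting $\epsilon\to0^+$ gives $\|j^*_{(0)}(x)\|_p\le \|x\|_{\mc M}^{1/p'}\,\|\varphi_x\|^{1/p}$ (equivalently, cite \cite[Chapter~4]{bergh}). Since $1/p'\to0$ and $1/p\to1$ as $p\to1$, the right-hand side tends to $\|\varphi_x\|$, so $\limsup_{p\to1}\|j^*_{(0)}(x)\|_p\le\|\varphi_x\|$.

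For the lower bound I would pass to the duality $L^p_{(0)}(\varphi)^*=L^{p'}_{(0)}(\varphi)$ implemented by $\ip{\cdot}{\cdot}_{p,(0)}$, under which $\ip{j^*_{(0)}(x)}{j^*_{(0)}(y)}_{p,(0)}=\varphi_x(y)$ for $y\in L$, a number independent of $p$. As the elements $j^*_{(0)}(y)$ with $y\in L=\mc M\cap\mc M_*$ are dense in $L^{p'}_{(0)}(\varphi)$, the norm may be computed as a supremum over such $y$; and for each fixed $y$ the same log-convexity estimate, now at the opposite endpoint, yields $\|j^*_{(0)}(y)\|_{p'}\le\|y\|_{\mc M}^{1/p}\|\varphi_y\|^{1/p'}\to\|y\|_{\mc M}$ as $p\to1$. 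Combining these gives $\liminf_{p\to1}\|j^*_{(0)}(x)\|_p\ge\sup\{|\varphi_x(y)|:y\in L,\ \|y\|_{\mc M}\le1\}$.

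The crux, and the step I expect to be the main obstacle, is to identify this last supremum with $\|\varphi_x\|$; the inequality $\le$ is immediate, and for $\ge$ one must show the unit ball of $L$ is large enough to recover the norm of the normal functional $\varphi_x$. Since $\varphi_x\in\mc M_*$ is $\sigma$-weakly continuous we have $\|\varphi_x\|=\sup\{|\varphi_x(a)|:a\in\mc M,\ \|a\|\le1\}$, so it suffices to approximate a near-optimal $a$ $\sigma$-weakly by elements of the unit ball of $L$. Here I would invoke Kaplansky's density theorem for the $\sigma$-weakly dense $*$-subalgebra $\mf a_0^2\subseteq L$ (it is $*$-closed because $\mf a_0$ is a Tomita algebra): its unit ball is $\sigma$-strong$^*$, hence $\sigma$-weakly, dense in that of $\mc M$, and together with the normality of $\varphi_x$ this forces the supremum to equal $\|\varphi_x\|$. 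The delicate points are to confirm that $\mf a_0^2$ (and not merely $\mf a_0$) is $\sigma$-weakly dense in $\mc M$ and lies inside $L$; granting this, the two bounds combine to give $\lim_{p\to1}\|j^*_{(0)}(x)\|_p=\|\varphi_x\|$.
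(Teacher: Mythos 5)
Your proposal is correct and follows essentially the same route as the paper: the upper bound via a Gaussian-damped function in $\mc F$ giving $\|j^*_{(0)}(x)\|_p\le\|x\|^{1/p'}\|\varphi_x\|^{1/p}$ (the paper cites this as \cite[Corollary~2.8]{staf} and gives the same quick proof), and the lower bound via the duality pairing $\ip{j^*_{(0)}(x)}{j^*_{(0)}(y)}_{p,(0)}=\ip{y}{\varphi_x}$ combined with the same estimate at the conjugate index and Kaplansky density to produce near-optimal $y$ in the unit ball of $L$. Your final ``crux'' step is exactly the paper's appeal to Kaplansky density for the $\sigma$-weakly dense subspace $L\supseteq\mf a_0^2$ (the paper records the density of $\mf a_0^2$ in its setup), so your flagged delicate point is already settled there, and routing the argument through the $*$-subalgebra $\mf a_0^2$ is a harmless, if slightly more careful, variant.
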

\begin{proof}
Firstly, we show that
\[ \|j^*_{(0)}(x)\|_p \leq \|x\|^{1/p'} \|\varphi_x\|^{1/p} \qquad (x\in L). \]
This is \cite[Corollary~2.8]{staf}, but we give a quick proof.  Pick $\epsilon>0$ and
define $F:\mc S\rightarrow L$ by $F(z) = \exp(\epsilon(z^2-\theta^2)) \|\varphi_x\|^{\theta-z}
\|x\|^{z-\theta} x$.  Then $F\in\mc F$, $F(\theta) = x$, and we can check that
\[ \|F\|_{\mc F} \leq \|\varphi_x\|^\theta \|x\|^{1-\theta} \exp\big( \epsilon(1-\theta^2) \big). \]
As $\epsilon>0$ was arbitrary, we conclude that, as $\theta=1/p$,
\[ \|j^*_{(0)}(x)\|_p \leq \|x\|^{1-\theta} \|\varphi_x\|^{\theta} =
\|x\|^{1/p'} \|\varphi_x\|^{1/p}. \]

We now use duality.  For $\epsilon>0$, there exists $p_0>1$ such that, if $1<p\leq p_0$,
then $\|j^*_{(0)}(x)\|_p \leq (1+\epsilon)\|\varphi_x\|$.  As $L$ is $\sigma$-weakly dense in
$\mc M$, by Kaplansky density, we can find $y\in L$ with $\|y\|=1$ and $|\ip{y}{\varphi_x}|
\geq (1-\epsilon)\|\varphi_x\|$.  Then there exists $p_1>1$ such that, if $1<p\leq p_1$,
then $\|j^*_{(0)}(y)\|_{p'} \leq (1+\epsilon) \|y\| = 1+\epsilon$.  Thus, if $1<p<\min(p_0,p_1)$,
then
\begin{align*} (1+\epsilon) \|\varphi_x\| &\geq \|j^*_{(0)}(x)\|_p
\geq \big| \ip{j^*_{(0)}(x)}{j^*_{(0)}(y)}_{p,(0)} \big|
\|j^*_{(0)}(y)\|_{p'}^{-1} \\
&\geq |\ip{y}{\varphi_x}| (1+\epsilon)^{-1}
\geq (1-\epsilon) (1+\epsilon)^{-1} \|\varphi_x\|. \end{align*}
As $\epsilon>0$ was arbitrary, this completes the proof.
\end{proof}

\subsection{Operator spaces}

As noted by Pisier in \cite{pisier_mem}, \cite[Section~2.6]{pisier}, the complex interpolation
method interacts very nicely with operator spaces.  If $E_0$ and $E_1$ are operator spaces
which, as Banach spaces, form a compatible couple, then, say, identifying
$\mathbb M_n(E_0+F_0)$ with $(E_0+F_0)^{n^2}$, we turn $(\mathbb M_n(E_0),
\mathbb M_n(E_1))$ into a compatible couple.  We then define
\[ \mathbb M_n\big( (E_0,E_1)_{[\theta]} \big) =
(\mathbb M_n(E_0), \mathbb M_n(E_1))_{[\theta]}. \]
It is an easy check that these matrix norms satisfy the axioms for an (abstract)
operator space.  Then the obvious completely bounded version of Theorem~\ref{thm:inter}
holds.

Suppose that $E$ and $F$ are Banach spaces which form a sesquilinear dual pair.  A typical
example would be $E=L^\infty(\mu)$ and $F=L^1(\mu)$ for a probability measure $\mu$, together
with the pairing
\[ (f|g) = \int f \overline{g} \ d\mu \qquad (f\in E, g\in F). \]
Then we can show that $(E,F)_{[1/2]}$ is a Hilbert space, if $(E,F)$ is made a compatible
couple in the correct way, see \cite[Theorem~7.10]{pisier} for example.  In our example,
we recover $L^2(\mu)$ for the canonical compatibility.
Intrinsic in the proof is that a Hilbert space $H$ can be canonically identified, in an
anti-linear way, with its own dual, by way of the inner-product.

If $E$ and $F$ are also operator spaces, then we recover a Hilbert space with some operator
space structure.  There is a unique operator Hilbert space which is anti-linearly completely
isometric to its dual: Pisier's \emph{operator Hilbert space}.  We write $H_{oh}$ to denote
this structure on $H$.  As explained carefully in \cite[Page~139]{pisier}, at least when $\mc M$
is semifinite, we should consider the compatible couple $(\mc M,\mc M_*^\op)$.  Here, for
an operator space $E$, $E^\op$ denotes the space $E$ with the \emph{opposite} structure,
namely $\|(x_{ij})\|_\op = \|(x_{ji})\|$ for $(x_{ij})\in\mathbb M_n(E)$.  If $\mc A$ is a
C$^*$-algebra, then $\mc A^\op$ can be identified with $\mc A$, but with the product
reversed.  See also \cite[Section~4]{jrx} for a slightly different perspective.

Indeed, as noted in \cite{jrx}, if $\mc M$ is in standard position on $H$ with modular
conjugation $J$, then we have a canonical $*$-isomorphism $\phi:\mc M^\op \rightarrow \mc M',
x \mapsto Jx^*J$ and so $\phi_*:\mc M'_* \rightarrow \mc M_*^\op$ is a completely isometric
isomorphism of operator spaces.  We conclude that the natural operator space structure
on $L^p(\mc M)$ will arise from studying the compatible couple $(\mc M,\mc M'_*)$.
Alternatively, if we privilege $\mc M_*$, then we should look at $(\mc M',\mc M_*)$.
When $\mc M_*=A(G)$, it turns out that this simple observation will guide us as to how to
give the resulting non-commutative $L^p$ spaces an $A(G)$-module action.

Let us finish by showing the operator space version of Proposition~\ref{prop:limit}.

\begin{proposition}\label{prop:limitcb}
With notation as above, let $x\in \mathbb M_n(L)$ for some $n\in\mathbb N$.
Then $\lim_{p\rightarrow 1} \|j^*_{(0)}(x)\|_p = \|\varphi_x\|$.
\end{proposition}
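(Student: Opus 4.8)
The plan is to run the proof of Proposition~\ref{prop:limit} one level up, at the level of $\mathbb M_n$, using that the operator space structure is given by $\mathbb M_n(L^p_{(0)}(\varphi)) = (\mathbb M_n(\mc M), \mathbb M_n(\mc M_*))_{[1/p]}$. Since intersections of compatible couples are computed entrywise, $\mathbb M_n(\mc M)\cap\mathbb M_n(\mc M_*) = \mathbb M_n(L)$, so for $x=(x_{ij})\in\mathbb M_n(L)$ the element $\varphi_x = (\varphi_{x_{ij}})\in\mathbb M_n(\mc M_*)$ is well defined and $j^*_{(0)}(x)$ is its image in $\mathbb M_n(L^p_{(0)}(\varphi))$. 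Throughout, $\|x\|$ and $\|\varphi_x\|$ denote the norms in $\mathbb M_n(\mc M)$ and $\mathbb M_n(\mc M_*)$.

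For the upper bound I would use the identical analytic function. Fix $\epsilon>0$, put $\theta=1/p$, and define $F\colon\mc S\to\mathbb M_n(L)$ by $F(z)=\exp(\epsilon(z^2-\theta^2))\,\|\varphi_x\|^{\theta-z}\|x\|^{z-\theta}\,x$. As $F$ is a fixed matrix scaled by an entire scalar function, the boundary estimates are exactly as in the scalar case: on $\operatorname{Re}z=0$ one bounds $\|F(it)\|_{\mathbb M_n(\mc M)}$ and on $\operatorname{Re}z=1$ one bounds $\|F(1+it)\|_{\mathbb M_n(\mc M_*)}$, giving $F\in\mc F$ with $\|F\|_{\mc F}\le\|\varphi_x\|^\theta\|x\|^{1-\theta}\exp(\epsilon(1-\theta^2))$. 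Since $F(\theta)=x$, Theorem~\ref{thm:inter} and $\epsilon\to0$ yield $\|j^*_{(0)}(x)\|_p\le\|x\|^{1/p'}\|\varphi_x\|^{1/p}$, whence $\limsup_{p\to1}\|j^*_{(0)}(x)\|_p\le\|\varphi_x\|$. I note that this same computation is valid for the trace-class amplification $S^1_n$ in place of $\mathbb M_n$, a fact I will want below.

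For the lower bound I would use operator space duality together with matricial Kaplansky density. By the standard duality $\mathbb M_n(E)^*=S^1_n(E^*)$, the norm of $\varphi_x$ in $\mathbb M_n(\mc M_*)$ is the supremum of $|\ip{y}{\varphi_x}|$, where $\ip{y}{\varphi_x}=\sum_{i,j}\ip{y_{ij}}{\varphi_{x_{ij}}}$ and $y$ runs over the unit ball of $S^1_n(\mc M)$. Fix $\epsilon>0$ and pick such a $y$ with $|\ip{y}{\varphi_x}|\ge(1-\epsilon)\|\varphi_x\|$; applying Kaplansky density to $\mathbb M_n(L)\subseteq\mathbb M_n(\mc M)$, and using that $\varphi_x$ is $\sigma$-weakly continuous, I may take $y\in\mathbb M_n(L)$ with $\|y\|_{S^1_n(\mc M)}\le1$ and $|\ip{y}{\varphi_x}|\ge(1-2\epsilon)\|\varphi_x\|$. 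Applying the upper bound, in its $S^1_n$ form, to $y$ at the index $p'$ gives $\|j^*_{(0)}(y)\|_{S^1_n(L^{p'}_{(0)}(\varphi))}\le\|y\|_{S^1_n(\mc M)}^{1/p}\|\varphi_y\|_{S^1_n(\mc M_*)}^{1/p'}$, which for fixed $y$ tends to $\|y\|_{S^1_n(\mc M)}\le1$ as $p\to1$. Since the matrix pairing of $j^*_{(0)}(x)$ with $j^*_{(0)}(y)$ reduces entrywise to $\ip{y}{\varphi_x}$ via $\ip{\cdot}{\cdot}_{p,(0)}$, contractivity of the $\mathbb M_n$--$S^1_n$ duality pairing gives, for $p$ near $1$,
\[ \|j^*_{(0)}(x)\|_p \ge \frac{|\ip{y}{\varphi_x}|}{\|j^*_{(0)}(y)\|_{S^1_n(L^{p'}_{(0)}(\varphi))}} \ge \frac{(1-2\epsilon)\|\varphi_x\|}{1+\epsilon}, \]
so $\liminf_{p\to1}\|j^*_{(0)}(x)\|_p\ge\|\varphi_x\|$. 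Together with the upper bound this proves the claim.

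The step needing the most care is the operator space bookkeeping in the last paragraph: one must check that the $\mathbb M_n(L^p_{(0)}(\varphi))$--$S^1_n(L^{p'}_{(0)}(\varphi))$ duality pairing really is the entrywise amplification of $\ip{\cdot}{\cdot}_{p,(0)}$, so that it collapses to $\ip{y}{\varphi_x}$, and that the $S^1_n$ and $\mathbb M_n$ amplifications are used consistently on the two sides of the pairing, including any transpose arising from the opposite-structure conventions discussed above. Once these identifications are pinned down, the numerical estimates are verbatim those of Proposition~\ref{prop:limit}.
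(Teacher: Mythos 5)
Your proof is correct, and its skeleton (matrix-level upper bound via the explicit analytic function, then duality plus Kaplansky density for the lower bound) matches the paper's; but the key duality step is genuinely different. The paper stays entirely at the $\mathbb M_n$ level: by Smith's Lemma, the norm of $\varphi_x$ in $\mathbb M_n(\mc M_*)$ is computed by pairing against the unit ball of $\mathbb M_n(\mc M)$ with an $\mathbb M_{n^2}$-valued pairing, so after choosing $y\in\mathbb M_n(L)$ by Kaplansky density one re-runs the scalar estimate verbatim, using the jointly completely contractive amplified pairing; no interpolation facts are needed beyond the operator-space version of Theorem~\ref{thm:inter}. You instead dualise via $\mathbb M_n(E)^*=S^1_n(E^*)$ and a scalar trace pairing, which buys a more elementary-looking pairing but costs three pieces of bookkeeping the paper avoids: (i) the $S^1_n$-amplified upper bound --- this is fine, since your $F$ is a scalar function times a fixed matrix and the argument works in any compatible couple, with $S^1_n(L^{p'}_{(0)}(\varphi))$ read as $(S^1_n(\mc M),S^1_n(\mc M_*))_{[1/p']}$; (ii) contractivity of the $\mathbb M_n$--$S^1_n$ pairing on the interpolated spaces, which is cleanest to get by applying Theorem~\ref{thm:bilin_int} to the two contractive endpoint pairings $\mathbb M_n(\mc M)\times S^1_n(\mc M_*)\rightarrow\mathbb C$ and $\mathbb M_n(\mc M_*)\times S^1_n(\mc M)\rightarrow\mathbb C$, rather than by invoking full operator-space duality of the interpolation scale; and (iii) your Kaplansky step is more delicate than you acknowledge, since the unit ball of $S^1_n(\mc M)$ is not the ball of a von Neumann algebra --- repair it by factoring $y=\alpha\tilde y\beta$ with scalar Hilbert--Schmidt matrices $\alpha,\beta$ and $\tilde y\in\mathbb M_n(\mc M)$, applying Kaplansky to $\tilde y$ (via the $\sigma$-weakly dense $*$-subalgebra $\mathbb M_n(\mf a_0^2)\subseteq\mathbb M_n(L)$; note $L$ itself is only a subspace), and observing that left and right multiplication by the fixed scalar matrices preserves both membership of $\mathbb M_n(L)$ and $\sigma$-weak convergence against the normal entries $\varphi_{x_{ij}}$. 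Your concern about the $\op$ convention resolves harmlessly: transposition leaves the scalar trace pairing invariant, so all numerical estimates survive the transposes, whereas in the paper's $\mathbb M_{n^2}$-valued pairing this point is absorbed into using the couple $(\mathbb M_n(\mc M),\mathbb M_n(\mc M_*^\op))$ from the outset.
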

\begin{proof}
The norm on $\mathbb M_n(L^p_{(0)}(\varphi))$ is given by interpolating
$\mathbb M_n(\mc M)$ and $\mathbb M_n(\mc M_*^\op)$, and so we can follow the first
part of the proof of Proposition~\ref{prop:limit} to find $p_0>1$ such that, if
$1<p<p_0$, then $\|j^*_{(0)}(x)\|_p \leq (1+\epsilon) \|\varphi_x\|$.

By Smith's Lemma, \cite[Proposition~2.2.2]{ER}, and as $\mathbb M_n(L)$ is $\sigma$-weakly 
dense in $\mathbb M_n(\mc M)$, there exists $y\in\mathbb M_n(L)$ with $\|y\|=1$ and
$|\langle\ip{y}{\varphi_x}\rangle|\geq (1-\epsilon)\|\varphi_x\|$.  We can now proceed as
in the end of the proof of Proposition~\ref{prop:limit}.
\end{proof}

\section{Non-commutative $L^p$ spaces associated to the Fourier algebra}
\label{sec:third}

Let $G$ be a locally compact group $G$.  We have that $VN(G)$ is a Hopf-von Neumann
algebra; indeed, a Kac algebra, \cite{ES}; indeed, is a locally compact quantum
group, \cite{kusvn, kusbook}.  We have a normal $*$-homomorphism
\[ \Delta:VN(G)\rightarrow VN(G)\vnten VN(G)=VN(G\times G);
\quad \Delta(\lambda(s)) = \lambda(s)\otimes\lambda(s) \qquad (s\in G). \]
It is not obvious that such a map exists, but if we define $W\in\mc B(L^2(G\times G))$
by $W\xi(s,t) = \xi(ts,t)$ for $s,t\in G$, then $W$ is a unitary, and we can define
$\Delta(x) = W^*(1\otimes x)W$ for $x\in VN(G)$.  Then $\Delta$ is coassociative, namely
$(\Delta\otimes\id)\Delta = (\id\otimes\Delta)\Delta$.  Thus $\Delta$ induces an
associative product on $VN(G)_*$, leading to the Fourier algebra, \cite{eymard}.
For $\xi,\eta\in L^2(G)$ we write $\omega_{\xi,\eta}$ for the normal functional on
$VN(G)$ given by $\ip{x}{\omega_{\xi,\eta}} = (x\xi|\eta)$.  As $VN(G)$ is in
standard position, \cite[Chapter~IX, Section~1]{tak2}, every member of $A(G)$ arises in this way.
We define a map, the \emph{Eymard embedding}, $\Phi:VN(G)_* \rightarrow C_0(G)$ by
\[ \Phi(\omega_{\xi,\eta})(s) = \ip{\lambda(s)}{\omega_{\xi,\eta}}
= \int_G \xi(s^{-1}t) \overline{\eta(t)} \ dt
\qquad (s\in G, \omega_{\xi,\eta}\in A(G)). \]
Then $\Phi$ is an algebra homomorphism.  This follows \cite{eymard}, but we warn
the reader that \cite[Chapter~VII, Section~3]{tak2} uses a different map (with $s^{-1}$ replacing $s$).

Then $VN_r(G) = VN(G)'$ carries a coassociative map $\Delta'$ given by
$\Delta'(x) = (J\otimes J)\Delta(JxJ)(J\otimes J)$ for $x\in VN_r(G)$.  We have that
$\Delta'(\rho(s)) = \rho(s) \otimes \rho(s)$ for $s\in G$.  Similarly $A_r(G) = VN_r(G)_*$
becomes an algebra.  We write $\omega'_{\xi,\eta}$ for the functional on $VN_r(G)$ given by $\ip{x}{\omega'_{\xi,\eta}} = (x\xi|\eta)$ for $x\in VN_r(G)$.  We similarly define $\Phi':
A_r(G)\rightarrow C_0(G)$ by
\[ \Phi'(\omega'_{\xi,\eta})(s) = \ip{\rho(s)}{\omega_{\xi,\eta}}
= \int_G \xi(ts)\nabla(s)^{1/2} \overline{\eta(t)} \ dt
\qquad (s\in G, \omega'_{\xi,\eta}\in A_r(G)). \]

Guided by the arguments in the previous section, we shall turn $(VN_r(G),A_r(G))$ into
a compatible couple in the sense of Terp.  As $VN_r(G) = VN(G)'$, we have a canonical
$*$-isomorphism
\[ \phi:VN(G)^\op \rightarrow VN_r(G); \quad x\mapsto Jx^*J
\qquad (x\in VN(G)). \]
Then we have
\[ \phi_*:A_r(G) \rightarrow A(G)^\op; \quad \omega'_{\xi,\eta} \mapsto \omega_{J\eta,J\xi}
\qquad (\xi,\eta\in L^2(G)). \]
This allows us to regard $(VN_r(G),A(G))$ as a compatible couple, and we shall often
suppress the implicit $\phi_*$ involved.  We then define
\[ L^p(\hat G) = (VN_r(G),A(G))_{[1/p]} \qquad (1<p<\infty). \]
Here we use the ``dual group'' notation which is common when studying the Fourier algebra.
The motivation is that when $G$ is abelian, we have that $VN_r(G) = L^\infty(\hat G)$ and
$A(G)=L^1(\hat G)$ by the Fourier transform, where $\hat G$ is the Pontryagin dual of
$G$, and so $L^p(\hat G)$ agrees with the usual meaning.  We keep the same notation in the
non-abelian case, although now it is purely formal.  We give
$L^p(\hat G)$ the canonical operator space structure
\[ \mathbb M_n(L^p(\hat G)) = \big( \mathbb M_n(VN_r(G)), \mathbb M_n(A(G)) \big)_{[1/p]}. \]

We have that
\[ \Phi(\phi_*(\omega'_{\xi,\eta}))(s) = (J\lambda(s)^*J\xi|\eta)
= (\rho(s^{-1})\xi|\eta) = \Phi'(\omega'_{\xi,\eta})(s^{-1})
\qquad (s\in G, \xi,\eta\in L^2(G)). \]
Hence, under the maps $\Phi$ and $\Phi'$, $\phi_*$ induces the ``check map''.
We also have the map $K$ available, which allows us to define a $*$-homomorphism
\[ \hat\phi:VN(G)\rightarrow VN_r(G); \quad x\mapsto KxK \qquad (x\in VN(G)). \]
The predual of this map is then
\[ \hat\phi_*:A_r(G)\rightarrow A(G); \quad \omega'_{\xi,\eta} \mapsto \omega_{K\xi,K\eta}
\qquad (\xi,\eta\in L^2(G)), \]
so that
\[ \Phi(\hat\phi_*(\omega'_{\xi,\eta}))(s) = (K\lambda(s)K\xi|\eta)
= (\rho(s)\xi|\eta) = \Phi'(\omega'_{\xi,\eta})(s)
\qquad (s\in G, \xi,\eta\in L^2(G)). \]
Thus, under the maps $\Phi$ and $\Phi'$, we see that $\hat\phi_*$ is the formal identity.

\begin{lemma}\label{lem:int_rrr}
For $f,g\in C_{00}(G)$, let $a=f^*g$.  Then $\rho(a)\in VN_r(G)$ agrees with $\nabla^{1/2} a
\in A(G)$ in $VN_r(G)\cap A(G)=L$.
\end{lemma}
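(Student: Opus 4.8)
The plan is to unpack the two assertions hidden in the statement and verify them directly: that $\rho(a)$ really lies in $L=VN_r(G)\cap A(G)$, and that the functional representing it, once transported into $A(G)$, is the function $\nabla^{1/2}a$. Throughout I work with the weight $\varphi'$ on $VN_r(G)$, whose GNS data satisfy $\Lambda'(\rho(f))=K(f)$ and for which $J$ is the modular conjugation. For membership, the key observation is that for $f\in C_{00}(G)$ the operator $\rho(f)$ lies in the Tomita algebra $\mf a_0$ of $\varphi'$: since $\sigma'_z(\rho(f))=\rho(\nabla^{-i\overline z}f)$ and $\nabla$ is continuous and strictly positive while $f$ has compact support, $\nabla^{-i\overline z}f\in C_{00}(G)$ for every $z\in\mathbb C$, so $z\mapsto\sigma'_z(\rho(f))$ is entire with values in $\rho(C_{00}(G))\subseteq\mf n_{\varphi'}\cap\mf n_{\varphi'}^*$ and $\Lambda'$-images in $C_{00}(G)$, hence in the domain of every power of the modular operator. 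As $a=f^*g$ is set up so that $\rho(a)=\rho(f)^*\rho(g)$, and $\mf a_0$ is a $*$-subalgebra, we get $\rho(a)\in\mf a_0^2\subseteq L$.

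To identify the functional $\varphi_{\rho(a)}\in A_r(G)$ I would feed $y=\rho(f_1)$, $z=\rho(f_2)$ (with $f_1,f_2\in C_{00}(G)$) into the defining relation $\ip{y^*z}{\varphi_{\rho(a)}}=(\rho(a)J\Lambda'(y)|J\Lambda'(z))$ for $L=L_{(0)}$. A short computation gives $J\Lambda'(\rho(f_j))=\overline{f_j}$, so the right-hand side is $(\rho(a)\overline{f_1}|\overline{f_2})$, which unfolds into the double integral $\int\!\int a(t)\overline{f_1(st)}f_2(s)\nabla(t)^{1/2}\,dt\,ds$. On the other hand $y^*z=\rho(f_1)^*\rho(f_2)=\rho(f_1^*f_2)$, and pairing a convolution operator with a normal functional gives $\ip{\rho(f_1^*f_2)}{\varphi_{\rho(a)}}=\int(f_1^*f_2)(t)\,\Phi'(\varphi_{\rho(a)})(t)\,dt$, where $(f_1^*f_2)(t)=\int\overline{f_1(r)}f_2(rt)\,dr$. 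Substituting $r=st$ and then $t\mapsto t^{-1}$ in the first integral, and tracking the modular factors through the inversion formula, brings it to the same shape; comparing integrands forces $\Phi'(\varphi_{\rho(a)})(t)=\nabla(t)^{-1/2}a(t^{-1})$. Since both candidate densities are continuous and the functions $f_1^*f_2$ are linearly dense, this pins the representative down.

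Finally I would transport the answer back to $A(G)$. The couple $(VN_r(G),A(G))$ is formed through $\phi_*$, which under the Eymard maps is the check map, $\Phi(\phi_*(\omega'))(s)=\Phi'(\omega')(s^{-1})$. Hence the $A(G)$-representative of $\rho(a)$ is $s\mapsto\Phi'(\varphi_{\rho(a)})(s^{-1})=\nabla(s^{-1})^{-1/2}a(s)=\nabla(s)^{1/2}a(s)$, which is precisely $\nabla^{1/2}a$. Normality of $\varphi_{\rho(a)}$ together with the $\sigma$-weak density of $\lin\{\rho(f_1)^*\rho(f_2)\}$ in $VN_r(G)$ ensures that agreement on these test elements determines the functional, completing the identification.

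I expect the main obstacle to be the modular-function bookkeeping in the second step: each change of variable introduces powers of $\nabla$, and it is easy to finish with a spurious factor of $\nabla^{\pm 1/2}$. The delicate structural point is that the symmetric ($\alpha=0$) definition of $L$ splits the modular operator evenly between the two arguments $J\Lambda'(y)$ and $J\Lambda'(z)$, so $\varphi_{\rho(a)}$ is \emph{not} the naive functional $\varphi'(\,\cdot\,\rho(a))$ — that guess misses exactly a factor of $\nabla^{-1/2}$ — and only the careful computation yields the correct $\nabla^{1/2}$ in the final answer, matching the statement.
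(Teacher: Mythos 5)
Your proof is correct, and while it lands in the same place it takes a genuinely different route from the paper. The paper's proof is essentially one citation plus one computation: since $\rho(f),\rho(g)\in\mf n_{\varphi'}$, Terp's Proposition~4 gives at a stroke both that $\rho(f^*g)\in L$ \emph{and} that the representing functional is the vector functional $\varphi_{\rho(f^*g)}=\omega'_{J\Lambda'\rho(f),J\Lambda'\rho(g)}=\omega'_{\overline{f},\overline{g}}$; its transport $\phi_*(\varphi_{\rho(a)})=\omega_{Kg,Kf}$ is then evaluated by a single change of variables, $\omega_{Kg,Kf}(s)=\int_G Kg(s^{-1}t)\overline{Kf(t)}\,dt=(\nabla^{1/2}a)(s)$. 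You instead obtain membership from Izumi's containment $\mf a_0^2\subseteq L$, observing that $\rho(C_{00}(G))$ sits inside the maximal Tomita algebra --- which is sound, and indeed the paper records in the proof of Proposition~\ref{prop:den_one} that $\Lambda'(\rho(C_{00}(G)))=C_{00}(G)$ is a Tomita algebra --- and you then \emph{reconstruct} $\varphi_{\rho(a)}$ by testing the defining relation of $L_{(0)}$ against $y^*z=\rho(f_1^*f_2)$ and reading off the Eymard transform. Your modular bookkeeping is right: I checked that $(\rho(a)\overline{f_1}\,|\,\overline{f_2})=\int_G\int_G a(t)\overline{f_1(st)}f_2(s)\nabla(t)^{1/2}\,dt\,ds$ does equal $\int_G(f_1^*f_2)(t)\,\nabla(t)^{-1/2}a(t^{-1})\,dt$, so $\Phi'(\varphi_{\rho(a)})(t)=\nabla(t)^{-1/2}a(t^{-1})$, and composing with the check map induced by $\phi_*$ gives exactly $\nabla^{1/2}a$, consistent with the paper. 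What each route buys: Terp's proposition hands over the functional in closed vector-functional form (a form the paper reuses later, e.g.\ implicitly in Proposition~\ref{prop:pi_p_values}), whereas your argument is self-contained apart from $\mf a_0^2\subseteq L$ and makes the symmetric $\alpha=0$ splitting explicit --- at the price of the identification step, where ``comparing integrands'' deserves one more sentence: $\Phi'$ is injective and both candidates for $\Phi'(\varphi_{\rho(a)})$ are continuous, so it suffices either that functions of the form $f_1^*f_2$ can concentrate at any point of $G$ (translated approximate identities), or that $\lin\rho(C_{00}(G)^2)$ is $\sigma$-weakly dense so that a normal functional is determined by its values there. One small caveat: the extension formula you quote, $\sigma'_z(\rho(f))=\rho(\nabla^{-i\overline z}f)$, is anti-holomorphic in $z$ as printed, so it cannot by itself witness analyticity of $\rho(f)$; but since multiplication by any complex power of $\nabla$ preserves $C_{00}(G)$, the orbit $\{\sigma'_z(\rho(f))\}$ remains in $\rho(C_{00}(G))$ under the correct holomorphic extension as well, and your membership conclusion is unaffected.
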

\begin{proof}
We have that $\rho(f),\rho(g)\in\mf n_{\varphi'}$, and so by
\cite[Proposition~4]{terp}, we have that $\rho(f^*g) \in L = VN_r(G)\cap A(G)$, with
\[ \varphi_{\rho(f^*g)} = \omega'_{J\Lambda'\rho(f),J\Lambda'\rho(g)}
= \omega'_{\overline{f},\overline{g}} = \phi_*^{-1}(\omega_{\Lambda'\rho(g),\Lambda'\rho(f)})
= \phi_*^{-1}(\omega_{Kg,Kf}) = \phi_*^{-1} \hat\phi_*(\omega'_{g,f}). \]
Now, for $s\in G$,
\begin{align*} \omega_{Kg,Kf}(s) &= \int_G Kg(s^{-1}t) \overline{Kf(t)} \ dt
= \int_G g(t^{-1}s) \nabla(t^{-1}s)^{1/2} \overline{f(t^{-1})} \nabla(t^{-1})^{1/2}
\ dt \\ &= \nabla(s)^{1/2} \int_G f^*(t) g(t^{-1}s) \ dt
= (\nabla^{1/2} a)(s), \end{align*}
which completes the proof.
\end{proof}

We wish to turn $L^p(\hat G)$ into a (completely contractive) left $A(G)$-module.
For $p=1$, we obviously have a natural action of $A(G)$ on itself, and so the
previous lemma suggests the following action.

\begin{lemma}\label{lem:mod_struc}
There is a completely contractive action of $A(G)$ on $VN_r(G)$ such that
$a\cdot \rho(f) = \rho(a\cdot f)$ for $a\in A(G)$ and $f\in C_{00}(G)$, where $a\cdot f$
denotes the point-wise product.
\end{lemma}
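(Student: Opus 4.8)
The plan is to build the action on $VN_r(G)$ by transporting the canonical module action of $A(G)$ on $VN(G)=A(G)^*$ through the $*$-isomorphism $\hat\phi\colon VN(G)\to VN_r(G)$, $x\mapsto KxK$, and then to read off the formula on $\rho(f)$ from the relation $\hat\phi(\lambda(f))=\rho(f)$.

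First I would use that, since $(VN(G),\Delta)$ is a Hopf--von Neumann algebra, $A(G)$ is a completely contractive Banach algebra and $VN(G)$ is a completely contractive (left) $A(G)$-module for $a\cdot x=(a\otimes\id)\Delta(x)$; here complete contractivity comes from $\Delta$ being a normal $*$-homomorphism, hence a complete contraction into $VN(G)\vnten VN(G)$, composed with the slice map $a\otimes\id$, whose completely bounded norm is $\|a\|_{A(G)}$. I would then compute this action on a generator by testing against $b\in A(G)$: since $\ip{(a\otimes\id)\Delta(\lambda(f))}{b}=\ip{\Delta(\lambda(f))}{a\otimes b}=\ip{\lambda(f)}{ab}=\int_G f(s)\,\Phi(a)(s)\,\Phi(b)(s)\,ds$, and this equals $\ip{\lambda(a\cdot f)}{b}$ because $\Phi$ intertwines the $A(G)$-product with point-wise multiplication and $\Phi(b)(s)=\ip{\lambda(s)}{b}$, we obtain $a\cdot\lambda(f)=\lambda(a\cdot f)$, where $a\cdot f$ is the point-wise product. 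Phrasing this as a pairing identity, rather than manipulating the weak integral $\int_G f(s)\,\lambda(s)\otimes\lambda(s)\,ds$ directly, keeps the argument clean.

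Next I would transport along $\hat\phi$. As $K$ is an involution and $\lambda(f)=K\rho(f)K$, we have $\hat\phi(\lambda(f))=K\lambda(f)K=\rho(f)$; moreover $\hat\phi$ is a normal $*$-isomorphism, hence a complete isometry with completely isometric inverse. Defining $a\cdot y:=\hat\phi\big(a\cdot\hat\phi^{-1}(y)\big)$ for $y\in VN_r(G)$ therefore produces a completely contractive left $A(G)$-module action, and evaluating on $y=\rho(f)$ gives $a\cdot\rho(f)=\hat\phi(a\cdot\lambda(f))=\hat\phi(\lambda(a\cdot f))=\rho(a\cdot f)$, exactly as required; the module axioms and complete contractivity are inherited from the $VN(G)$ side since $\hat\phi$ and $\hat\phi^{-1}$ are completely isometric.

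The hard part will be the bookkeeping rather than any one deep step: I must confirm that the dual module action is genuinely completely contractive --- that the slice $a\otimes\id$ has cb-norm $\|a\|$ and that the associated bilinear map $A(G)\proten VN_r(G)\to VN_r(G)$ is jointly completely contractive --- and I must verify $a\cdot\lambda(f)=\lambda(a\cdot f)$ without sliding over the convergence of the operator-valued integral, which is precisely why I test against functionals above. Finally I would note that the operators $\rho(f)$ for $f\in C_{00}(G)$ are $\sigma$-weakly dense in $VN_r(G)$, so this formula together with continuity pins the action down uniquely.
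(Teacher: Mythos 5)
Your proof is correct and takes essentially the same approach as the paper: both transport the canonical dual-module action along the $*$-isomorphism $\hat\phi(x)=KxK$ (equivalently its completely isometric predual $\hat\phi_*$), the paper pulling the algebra back via $a\cdot x=\hat\phi_*^{-1}(a)\cdot x$ from the $A_r(G)$-action on $VN_r(G)$, while you push the module forward from the $A(G)$-action on $VN(G)$ --- the same action, since $\hat\phi$ intertwines the two comultiplications. Your explicit pairing computation showing $a\cdot\lambda(f)=\lambda(a\cdot f)$ simply fills in a step the paper takes as known for the corresponding $A_r(G)$-action on $VN_r(G)$.
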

\begin{proof}
We have that $VN_r(G)$ is a completely contractive $A_r(G)$-module (which is commutative,
so we shall not distinguish between left and right actions) such that $a\cdot\rho(f)
=\rho(a\cdot f)$ for $a\in A_r(G)$ and $f\in C_{00}(G)$.  As above, we have that
$\hat\phi_*:A_r(G)\rightarrow A(G)$ is a completely isometric homomorphism.  So our
required action is simply $a\cdot x = \hat\phi_*^{-1}(a)\cdot x$ for $a\in A(G), x\in
VN_r(G)$.
\end{proof}

The following is a useful approximation result, which allows us to work with concrete
functions, rather than operators in $VN_r(G)$.

\begin{proposition}\label{prop:den_one}
For $x\in VN_r(G)$, we have that $x\in L$ when there exists $\varphi_x\in A_r(G)$ with
$(x(\overline{a})|\overline b) = \ip{\rho(a^*b)}{\varphi_x}$ for $a,b\in C_{00}(G)$.
\end{proposition}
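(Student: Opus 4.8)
The plan is to read the stated identity as the defining condition for membership in $L=L_{(0)}$ evaluated only on a dense family of elements, and then to upgrade it to the full condition. Specialised to $\mc M=VN_r(G)$ and $\varphi=\varphi'$, an element $x\in VN_r(G)$ lies in $L$ exactly when there is $\varphi_x\in A_r(G)$ with
\[ \ip{y^*z}{\varphi_x} = \big( x J\Lambda'(y) \,\big|\, J\Lambda'(z) \big) \qquad (y,z\in\mf n_{\varphi'}). \]
So I first check that the hypothesis is nothing but this identity for the special pairs $y=\rho(a)$, $z=\rho(b)$ with $a,b\in C_{00}(G)$. Here $\rho(f)\in\mf n_{\varphi'}$ and $\Lambda'(\rho(f))=K(f)$, whence $J\Lambda'(\rho(f))=JK(f)=\overline f$, using that $K=J(\overline{\,\cdot\,})$ and $J^2=\id$; moreover $\rho(a)^*\rho(b)=\rho(a^*b)$ by the left Hilbert algebra structure already exploited in Lemma~\ref{lem:int_rrr}. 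Substituting, the displayed condition at $(\rho(a),\rho(b))$ reads $\ip{\rho(a^*b)}{\varphi_x}=(x\overline a\,|\,\overline b)$, which is precisely the assumed relation.

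It then remains to promote equality from the pairs $(\rho(a),\rho(b))$ to all of $\mf n_{\varphi'}\times\mf n_{\varphi'}$. I introduce the two sesquilinear forms
\[ B_1(y,z)=\ip{y^*z}{\varphi_x}, \qquad B_2(y,z)=\big(xJ\Lambda'(y)\,\big|\,J\Lambda'(z)\big), \]
both conjugate-linear in $y$ and linear in $z$, which by the above agree on $\rho(C_{00}(G))\times\rho(C_{00}(G))$. The point is their complementary continuity: for fixed $z$, the map $y\mapsto B_1(y,z)$ is $\sigma$-weakly continuous (since $y\mapsto y^*$ is $\sigma$-weakly continuous and $\varphi_x$ is normal), and symmetrically for fixed $y$; whereas $B_2$ depends on $y,z$ only through the vectors $\Lambda'(y),\Lambda'(z)$ and is continuous for the $L^2(G)$-norm on these, because $x$ is bounded and $J$ is isometric.

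The crux is therefore an approximation statement: every $w\in\mf n_{\varphi'}$ is the limit of a net $\rho(c_i)$, $c_i\in C_{00}(G)$, with $\rho(c_i)\to w$ $\sigma$-strongly$^*$ and $\Lambda'(\rho(c_i))\to\Lambda'(w)$ in $L^2(G)$. This is exactly the assertion that $\rho(C_{00}(G))$ spans a core for the closed GNS map $\Lambda'$, which follows from the theory of left Hilbert algebras (see \cite{tak2}) together with Kaplansky density to keep the approximating net bounded. Feeding such nets first into the $y$-slot (with $z=\rho(b)$ fixed) and then into the $z$-slot extends $B_1=B_2$ across $\mf n_{\varphi'}\times\mf n_{\varphi'}$, so that $x\in L$ with witnessing functional $\varphi_x$. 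I expect the main obstacle to be precisely this core step: because $\Lambda'$ is unbounded one must secure the two convergences \emph{simultaneously}, and the cheap alternative of approximating $\Lambda'(w)$ by functions does not control the operator $w$ itself. The converse implication is immediate, since restricting the defining identity to $y=\rho(a)$, $z=\rho(b)$ recovers the stated condition.
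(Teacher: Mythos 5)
Your overall architecture is the same as the paper's: check that the hypothesis is the defining condition for $L$ at the pairs $(\rho(a),\rho(b))$ (your computation $J\Lambda'\rho(f)=JK(f)=\overline f$ and $\rho(a)^*\rho(b)=\rho(a^*b)$ is correct), then extend to all of $\mf n_{\varphi'}\times\mf n_{\varphi'}$ via a simultaneous bounded approximation of $w\in\mf n_{\varphi'}$ by $\rho(c_i)$ with $\Lambda'\rho(c_i)\to\Lambda'(w)$. You even correctly identify the crux. But at exactly that point there is a genuine gap: the simultaneous approximation does \emph{not} ``follow from the theory of left Hilbert algebras together with Kaplansky density.'' Kaplansky density gives a bounded net in $\rho(C_{00}(G))$ converging $\sigma$-strongly$^*$ to $w$, but it gives no control whatsoever on the GNS vectors: $\Lambda'$ is not continuous for any of these operator topologies, and a bounded $\sigma$-strong$^*$ approximating net $\rho(c_i)\to w$ need not satisfy $Kc_i\to\Lambda'(w)$ in $L^2(G)$. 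The correct tool is Takesaki's approximation theorem \cite[Chapter~VI, Theorem~1.26]{tak2}, which delivers precisely the simultaneous convergences with the bound $\|\rho(c_i)\|\leq\|w\|$ --- but its hypothesis is that the Tomita algebra $\mf A=\Lambda'(\rho(C_{00}(G)))=C_{00}(G)$ generates the \emph{full} left Hilbert algebra $\Lambda'(\mf n_{\varphi'}\cap\mf n_{\varphi'}^*)$, equivalently that $C_{00}(G)$ is a core for the closed operator $S:\xi\mapsto\overline{\check\xi}$. This is not automatic: a left Hilbert subalgebra can generate the right von Neumann algebra while inducing a different weight, so ``dense subalgebra'' is not enough, and since $\varphi'$ was defined abstractly by $\varphi'(x)=\varphi(Jx^*J)$ rather than built from $C_{00}(G)$, the core property genuinely requires proof.

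That verification is where the paper's proof spends its main effort, and it is short but not a citation: one equips $D(S)=\{\xi\in L^2(G):\check\xi\in L^2(G)\}$ with the graph inner product $(\xi|\eta)_\sharp=(\xi|\eta)+(S\eta|S\xi)$ and shows $C_{00}(G)$ is dense. If $\eta\in D(S)$ is $\sharp$-orthogonal to $C_{00}(G)$, a change of variables gives
\begin{equation*}
0 = \int_G \overline{\eta(s)}\,\xi(s)\,(1+\nabla(s)^{-1})\ ds \qquad (\xi\in C_{00}(G)),
\end{equation*}
and since $\{\xi\cdot(1+\nabla^{-1}):\xi\in C_{00}(G)\}$ is dense in $L^2(G)$, it follows that $\eta=0$; then \cite[Lemma~3, Section~10.5]{sz} upgrades the core property to the statement that $\mf A$ generates the full left Hilbert algebra, so Takesaki's theorem applies. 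With that supplied, your two-step limit argument through the forms $B_1$ and $B_2$ is sound (your continuity claims are fine: $\rho(c_i)^*\rho(b)\to y^*\rho(b)$ $\sigma$-strongly for bounded nets, and $\varphi_x$ is normal; the paper does the same limit in one step by writing $\varphi_x=\omega'_{\xi,\eta}$ and passing to $(z\xi|y\eta)$). So the fix is localised: replace the appeal to Kaplansky density by the explicit core-for-$S$ argument above.
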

\begin{proof}
Let $\mf A = \Lambda'(\rho(C_{00}(G))) = C_{00}(G)$, which is a Tomita algebra (but
\emph{not} the maximal Tomita algebra).  We claim that $\mf A$ generates the full left
Hilbert algebra $\Lambda'(\mf n_{\varphi'} \cap \mf n_{\varphi'})$.  This will follow
from \cite[Lemma~3, Section~10.5]{sz} if we can show that $C_{00}(G)$ is a core for the
operator $S$, which is the closure of $\Lambda'(x)\mapsto\Lambda'(x^*)$ for
$x\in\mf n_{\varphi'} \cap \mf n_{\varphi'}$ (meaning that the closure of the $S$ operator
associated to $\mf A$ agrees with the canonical one associated to $\mf n_{\varphi'}$).

Indeed, for us, $S$ is the map $D(S)\rightarrow L^2(G), \xi\mapsto \overline{\check\xi}$
where $D(S) = \{ \xi\in L^2(G) : \check\xi\in L^2(G) \}$.  Then $D(S)$ is a Hilbert
space for the inner-product $(\xi|\eta)_\sharp = (\xi|\eta) + (S\eta|S\xi)$ for $\xi,\eta
\in D(S)$.  We claim that $C_{00}(G)$ is dense in $D(S)$, from which it will follow
that $C_{00}(G)$ is a core for $S$.  Suppose that $\eta\in D(S)$ is such that
$(\xi|\eta)_\sharp=0$ for $\xi\in C_{00}(G)$.  Then
\[ 0 = \int_G \xi(s) \overline{\eta(s)} \ ds + \int_G \xi(s^{-1}) \overline{\eta(s^{-1})} \ ds
= \int_G \overline{\eta(s)} \xi(s) (1+\nabla(s)^{-1}) \ ds, \]
for all $\xi\in C_{00}(G)$.  As the set $\{ \xi\cdot(1+\nabla^{-1}) : \xi\in C_{00}(G) \}$
is dense in $L^2(G)$, it follows that $\eta=0$.  So $C_{00}(G)$ is dense in $D(S)$, as required.

As $\mf A$ generates $\Lambda'(\mf n_{\varphi'} \cap \mf n_{\varphi'})$, we can
apply the approximation result \cite[Theorem~1.26, Chapter~VI]{tak2}.  This shows that for
$x\in \mf n_{\varphi'}$, we can find a sequence $(f_n)$ in $C_{00}(G)$ such that
\[ \lim_n \|\Lambda'(x) - \Lambda'\rho(f_n)\|=\lim_n \|\Lambda'(x) - Kf_n\| = 0, \qquad
\|\rho(f_n)\| \leq \|x\| \qquad (n\in\mathbb N), \]
and that $\rho(f_n)\rightarrow x$ strongly.

Finally, suppose that $x\in VN_r(G)$ and $\varphi_x\in A_r(G)$ are such that
$(x(\overline a)|\overline b) = \ip{\rho(a^*b)}{\varphi_x}$ for $a,b\in C_{00}(G)$.
Choose $\xi,\eta\in L^2(G)$ with $\varphi_x = \omega'_{\xi,\eta}$.
Let $y,z\in\mf n_{\varphi'}$, so we can find sequences $(a_n),(b_n)$ in $C_{00}(G)$,
as above, associated to $y$ and $z$ respectively.  Thus
\begin{align*} \ip{y^*z}{\varphi_x} &= (z\xi|y\eta) = \lim_n (\rho(b_n)\xi|\rho(a_n)\eta)
= \lim_n \ip{\rho(a_n^*b_n)}{\varphi_x} \\ &= \lim_n (xJKa_n|JKb_n)
= (xJ\Lambda'(y)|J\Lambda'(z)). \end{align*}
We conclude that $x\in L$ as required.
\end{proof}

We can immediately improve Lemma~\ref{lem:int_rrr}.

\begin{proposition}\label{prop:int_rrr_better}
Let $a\in A(G)$.  Then $a\in VN_r(G)\cap A(G)$ if and only if $\check a$ is \emph{right
bounded}, that is, there exists $K>0$ such that $\|f\check a\|_2 \leq K\|f\|_2$ for
$f\in C_{00}(G)$.  In this case, the map $f\mapsto f \check a$ extends to an operator
$x\in VN_r(G)$, and then $x\in L$ with $a = \phi_*(\varphi_x)$.
\end{proposition}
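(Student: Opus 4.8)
The plan is to route both implications, and the final identification, through the single computation that the operator ``right convolution by $\check a$'' has the matrix coefficients demanded by Proposition~\ref{prop:den_one}. I would fix the candidates at the outset. Given $a\in A(G)$, viewed as a function via $\Phi$, set $\varphi_x=\phi_*^{-1}(a)\in A_r(G)$; this makes sense because $\phi_*$ is a (complete) isometric isomorphism. By the check-map relation recorded above, namely $\Phi(\phi_*(\omega'))(s)=\Phi'(\omega')(s^{-1})$, the equation $\phi_*(\varphi_x)=a$ is equivalent to $\Phi'(\varphi_x)=\check a$, i.e. $\ip{\rho(s)}{\varphi_x}=\check a(s)$ for $s\in G$. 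This is exactly the functional that Proposition~\ref{prop:den_one} must be tested against.

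For the ``if'' direction, suppose $\check a$ is right bounded. Then $f\mapsto f\check a$ is bounded on the dense subspace $C_{00}(G)\subseteq L^2(G)$, hence extends to $x\in\mc B(L^2(G))$. Since convolution is associative, $x$ commutes with every left convolution $\lambda(g)$, $g\in C_{00}(G)$; as these generate $VN(G)$, we get $x\in VN(G)'=VN_r(G)$. It then suffices to verify the hypothesis of Proposition~\ref{prop:den_one} with the functional $\varphi_x$ above, which will simultaneously place $x$ in $L$ and yield $a=\phi_*(\varphi_x)$.

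For the ``only if'' direction, if $a\in L$ then by definition of the intersection there is $x\in VN_r(G)$ with associated functional $\varphi_x\in A_r(G)$ and $\phi_*(\varphi_x)=a$, so again $\Phi'(\varphi_x)=\check a$. Proposition~\ref{prop:den_one} gives $(x\overline u|\overline v)=\ip{\rho(u^*v)}{\varphi_x}$ for $u,v\in C_{00}(G)$. Thus both directions reduce to establishing
\[ \ip{\rho(u^*v)}{\varphi_x} = (\overline u\,\check a|\overline v) \qquad (u,v\in C_{00}(G)). \]
Here I would expand the left side as $\int (u^*v)(s)\check a(s)\,ds$, using normality of $\varphi_x$ and $\rho(u^*v)=\int (u^*v)(s)\rho(s)\,ds$; compute $(u^*v)(s)=\int\overline{u(r)}v(rs)\,dr$, where the factor $\nabla(t)^{-1}$ coming from $u^*(t)=\overline{u(t^{-1})}\nabla(t)^{-1}$ cancels against the inversion of Haar measure; and expand the right side as $\int\int\overline{u(t)}a(s^{-1}t)v(s)\,dt\,ds$. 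A single change of variables ($w=rs$, by left invariance of Haar measure) then identifies the two expressions.

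Once this identity is in hand, the ``if'' direction concludes via Proposition~\ref{prop:den_one} that $x\in L$ with $\phi_*(\varphi_x)=a$; and in the ``only if'' direction, since $(x\overline u-\overline u\,\check a|\overline v)=0$ for all $v\in C_{00}(G)$, we obtain $xf=f\check a$ on the dense set $C_{00}(G)$, whence $\check a$ is right bounded and $f\mapsto f\check a$ extends to $x$. The main obstacle is not conceptual but bookkeeping: keeping straight the three identifications in play --- the involution $u^*$ of the left Hilbert algebra, the passage $\phi_*$ between $A_r(G)$ and $A(G)$ which implements the check map, and the scattered factors of $\nabla$ --- and checking that every integral converges so that the use of Fubini's theorem and the change of variables are legitimate.
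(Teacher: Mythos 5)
Your proposal is correct and follows essentially the same route as the paper: realise $x$ as right convolution by $\check a$ (lying in $VN(G)'=VN_r(G)$), verify the matrix-coefficient identity $\ip{\rho(u^*v)}{\varphi_x}=(\overline u\,\check a|\overline v)$ by the same Fubini/change-of-variables computation, feed it into Proposition~\ref{prop:den_one}, and reverse the computation for the converse. The only cosmetic difference is that in the ``only if'' direction the identity should be quoted from the defining property of $L$ (together with $J\Lambda'\rho(u)=\overline u$) rather than from Proposition~\ref{prop:den_one}, which states the sufficient direction only.
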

\begin{proof}
Let $a=\omega_{\xi,\eta}$, so that $\check a = \Phi' \phi_*^{-1}(\omega_{\xi,\eta})$.  
Suppose that $\check a$ is right bounded.  As convolutions on the right commutes with
the action of $VN(G)$, we see that $x\in VN_r(G)$.  For $f,g\in C_{00}(G)$, we see that
\begin{align*} (x(\overline f)|\overline g) &= (\overline f \check a|\overline g)
= \int \overline{f(s)} \check a(s^{-1}t) g(t) \ ds \ dt
= \int \overline{f(s)} \check a(t) g(st) \ dt \ ds \\
&= \int f^*(s) g(s^{-1}t) \check a(t)  \ ds \ dt
= \ip{\rho(f^*g)}{\phi_*^{-1}(\omega_{\xi,\eta})}. \end{align*}
So by the previous proposition, $x\in L$ and $a = \phi_*(\varphi_x)$, as claimed.

Conversely, if $a\in VN_r(G)\cap A(G)$ then there exists $x\in L$ with
$a = \phi_*(\varphi_x)$.  As $f\check a$ always exists for $f\in C_{00}(G)$,
we can reverse the argument above to conclude that $x(f) = f\check a$ for
$f\in C_{00}(G)$, so that $\check a$ is right bounded.
\end{proof}

We can also apply our approximation idea to improve an approximation
result of Terp, \cite[Theorem~8]{terp}.

\begin{proposition}\label{prop:den_two}
For $x\in L$, we can find a net $(f_i)$ in $C_{00}(G)^2$ such that
$\sup_i \|\rho(f_i)\| < \infty$, $\rho(f_i)\rightarrow x$ $\sigma$-weakly, and
$\varphi_{\rho(f_i)} \rightarrow \varphi_x$ in norm.
\end{proposition}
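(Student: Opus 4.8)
The plan is to reduce the three conclusions to just two — uniform boundedness of the $\|\rho(f_i)\|$ and norm convergence $\varphi_{\rho(f_i)}\to\varphi_x$ in $A_r(G)$ — and then to construct such a net. First I claim the $\sigma$-weak convergence is then automatic. For $f,g\in C_{00}(G)$ we have $\rho(f^*g)\in L$ by Lemma~\ref{lem:int_rrr}, so $\rho(f_i)\in L$ whenever $f_i\in C_{00}(G)^2$. The case $\alpha=0$ of the identity $\ip{y}{\varphi_x^{(\alpha)}}=\ip{x}{\varphi_y^{(-\alpha)}}$ gives the symmetry $\ip{y}{\varphi_z}=\ip{z}{\varphi_y}$ for $y,z\in L$, whence $\ip{\rho(f_i)}{\varphi_y}=\ip{y}{\varphi_{\rho(f_i)}}$ for all $y\in L$. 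If $\varphi_{\rho(f_i)}\to\varphi_x$ in norm then, pairing with the fixed functional $y\in VN_r(G)=A_r(G)^*$, we get $\ip{\rho(f_i)}{\varphi_y}\to\ip{y}{\varphi_x}=\ip{x}{\varphi_y}$ for each $y\in L$. Since $j_{(0)}$ has norm-dense range, $\{\varphi_y:y\in L\}$ is dense in $A_r(G)$, so provided $\sup_i\|\rho(f_i)\|<\infty$ a routine three-$\epsilon$ argument upgrades this to $\ip{\rho(f_i)}{\psi}\to\ip{x}{\psi}$ for every $\psi\in A_r(G)$, that is, $\rho(f_i)\to x$ $\sigma$-weakly.

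It thus remains to find $f_i\in C_{00}(G)^2$ with $\sup_i\|\rho(f_i)\|<\infty$ and $\varphi_{\rho(f_i)}\to\varphi_x$ in norm. By Lemma~\ref{lem:int_rrr} the preduals of the building blocks are the coefficient functionals $\varphi_{\rho(f^*g)}=\omega'_{\overline f,\overline g}$; as $\{\overline f:f\in C_{00}(G)\}=C_{00}(G)$ is dense in $L^2(G)$ and $VN_r(G)$ is in standard position, these span a norm-dense subspace of $A_r(G)$. Hence, \emph{if we ignore the boundedness requirement}, it is trivial to norm-approximate $\varphi_x$ by such $\varphi_{\rho(f_i)}$.

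The whole difficulty, and the main obstacle, is to secure this norm convergence while keeping $\sup_i\|\rho(f_i)\|$ finite; the two demands pull in opposite directions. Merely approximating the coefficient vectors $\overline f,\overline g$ in $L^2(G)$, as the density argument does, gives no control at all on the operator norms $\|\rho(f^*g)\|=\|\rho(f)^*\rho(g)\|$, whereas inserting a bounded approximate identity to bound the operators sends one coefficient vector to a point mass and destroys the convergence in $A_r(G)$. This is the non-commutative counterpart of approximating an $L^1\cap L^\infty$ function by truncations, which are bounded in $L^\infty$ and convergent in $L^1$. To break the tension I would first invoke Terp's approximation theorem \cite[Theorem~8]{terp}, which approximates $x$ by bounded elements of the form $a^*b$ with $a,b\in\mf n_{\varphi'}$, in such a way that the operator norms are controlled and the associated functionals converge to $\varphi_x$ in norm.

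Finally I would refine such a product $a^*b$ into $C_{00}(G)^2$. Applying the Takesaki approximation used in the proof of Proposition~\ref{prop:den_one}, \cite[Theorem~1.26, Chapter~VI]{tak2}, to each factor produces $g,g'\in C_{00}(G)$ with $\|\rho(g)\|\leq\|a\|$, $\|\rho(g')\|\leq\|b\|$, and $Kg=\Lambda'\rho(g)\to\Lambda'(a)$, $Kg'\to\Lambda'(b)$ in $L^2(G)$. Then $g^*g'\in C_{00}(G)^2$, the operator norm $\|\rho(g)^*\rho(g')\|$ is bounded by $\|a\|\,\|b\|$, and — this is the point — the predual $\varphi_{\rho(g^*g')}=\omega'_{\overline g,\overline{g'}}$ converges in norm, because the coefficient vectors $\overline g=J(Kg)$ and $\overline{g'}=J(Kg')$ converge in $L^2(G)$ by continuity of $J$, with limit $\omega'_{J\Lambda'(a),J\Lambda'(b)}=\varphi_{a^*b}$. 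Thus Takesaki's approximation, unlike a bounded approximate identity, simultaneously bounds the operators and converges the GNS vectors, and the isometry $J$ carries that convergence over to the coefficient vectors that define the predual. A diagonal argument over Terp's index and the approximation index then assembles the required net $(f_i)$; the only delicate bookkeeping is to check that this diagonalisation preserves both the uniform bound and the norm convergence of the preduals.
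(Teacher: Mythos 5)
Your proof is correct and takes essentially the same route as the paper's: Terp's \cite[Theorem~8]{terp} (with the factored form $y_i^*z_i$ and bounded factors extracted from its proof), refined into $C_{00}(G)^2$ by the Takesaki approximation \cite[Chapter~VI, Theorem~1.26]{tak2} already used in Proposition~\ref{prop:den_one}, and assembled by a diagonal argument. Your preliminary reduction --- that the $\sigma$-weak convergence is automatic from the uniform bound and norm convergence of the functionals, via the symmetry $\ip{y}{\varphi_z}=\ip{z}{\varphi_y}$ on $L$ --- is a correct minor variation; the paper instead obtains it directly from the strong convergence of the approximating factors.
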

\begin{proof}
By Terp's result \cite[Theorem~8]{terp} we can a net bounded $(x_i)$ in $\mf m_{\varphi'}$
with $x_i\rightarrow x$ $\sigma$-weakly and $\varphi_{x_i}\rightarrow\varphi_x$ in norm.
Indeed, from the proof, we can choose $x_i = y_i^*z_i$ for some $y_i,z_i\in
\mf n_{\varphi'}$ with $(y_i)$ and $(z_i)$ bounded nets.

For each $i$, choose a sequence $(a_{i,n})$ in $C_{00}(G)$ with
$\rho(a_{i,n}) \rightarrow y_i$ strongly, $Ka_{i,n}\rightarrow\Lambda'(y_i)$
in norm, and with $\|\rho(a_{i,n})\|\leq\|y_i\|$.  Similarly choose $(b_{i,n})$ associated
to $z_i$.  It follows (compare with the proof above) that $\rho((a_{i,n})^* b_{i,n})
\rightarrow y_i^*z_i = x_i$ $\sigma$-weakly, and that $\varphi_{\rho((a_{i,n})^* b_{i,n})}
\rightarrow \varphi_{x_i}$ in norm.  With the diagonal ordering, we see that
$((a_{i,n})^* b_{i,n})$ is the required net.
\end{proof}

\begin{theorem}\label{thm:ag_mod_action}
There is a completely contractive left action of $A(G)$ on $L^p(\hat G)$, for $1<p<\infty$,
such that $a\cdot j^*_{(0)}\rho(b) = j^*_{(0)}\rho(a\cdot b)$ for $a\in A(G)$ and
$b\in C_{00}(G)^2$.
\end{theorem}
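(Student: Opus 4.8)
The plan is to build the action by complex interpolation of two module actions defined on the endpoints of the couple $(VN_r(G),A(G))$, using the completely bounded form of the bilinear interpolation Theorem~\ref{thm:bilin_int} with parameter $\theta=1/p$. At the $VN_r(G)$ endpoint, Lemma~\ref{lem:mod_struc} already supplies a completely contractive action $A(G)\times VN_r(G)\rightarrow VN_r(G)$, characterised by $a\cdot\rho(f)=\rho(a\cdot f)$. At the $A(G)$ endpoint, I would use the multiplication of the Fourier algebra on itself: since $A(G)=VN(G)_*$ is the predual of the Hopf--von Neumann algebra $(VN(G),\Delta)$, its product is $\Delta_*$ and hence completely contractive, so $A(G)\times A(G)\rightarrow A(G)$ is a completely contractive bilinear map; as a function algebra this product is just pointwise multiplication, and because $A(G)$ is commutative the opposite operator space structure entering the definition of $L^p(\hat G)$ does not affect complete contractivity of the module map here.

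The main work is to check that these two endpoint maps assemble into a \emph{single} bilinear map $T:A(G)\times L\rightarrow L$, where $L=VN_r(G)\cap A(G)$; equivalently, that for $a\in A(G)$ and $x\in L$ the element $a\cdot x$ computed by the $VN_r(G)$ action lies in $L$ and has $A(G)$-face equal to the pointwise product of $a$ with the $A(G)$-face of $x$. I would verify this first on the generators $x=\rho(b)$ with $b\in C_{00}(G)^2$: by Lemma~\ref{lem:int_rrr} the $A(G)$-face of $\rho(b)$ is $\nabla^{1/2}b$, while Lemma~\ref{lem:mod_struc} gives $a\cdot\rho(b)=\rho(a\cdot b)$, whose $A(G)$-face is $\nabla^{1/2}(a\cdot b)$; since multiplication in $A(G)$ is pointwise, $a\cdot(\nabla^{1/2}b)=\nabla^{1/2}(a\cdot b)$ matches exactly, and in particular both faces of $\rho(a\cdot b)$ exist, so $\rho(a\cdot b)\in L$. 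To extend the identity to an arbitrary $x\in L$, I would invoke Proposition~\ref{prop:den_two} to choose a bounded net $\rho(f_i)\rightarrow x$ $\sigma$-weakly with $\varphi_{\rho(f_i)}\rightarrow\varphi_x$ in norm, and then pass to the limit: the $VN_r(G)$ action is normal, hence $\sigma$-weakly continuous, while Fourier multiplication is norm continuous on $A(G)$, so both faces converge and the compatibility survives. This limiting step, together with the bookkeeping of the $\nabla^{1/2}$ twist from Lemma~\ref{lem:int_rrr}, is where I expect the real difficulty to lie.

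With $T:A(G)\times L\rightarrow L$ in hand, and the two endpoint bilinear maps completely contractive ($M_0=M_1=1$), the completely bounded bilinear interpolation theorem---applied with the constant couple $(A(G),A(G))$ in the first variable and $(VN_r(G),A(G))$ in the second and third---yields a completely contractive bilinear map $A(G)\times L^p(\hat G)\rightarrow L^p(\hat G)$ extending $T$, which is the desired action. The stated formula is then immediate: $j^*_{(0)}\rho(b)$ is precisely the image of $\rho(b)\in L$ under the canonical inclusion $L\hookrightarrow L^p(\hat G)$, and since the interpolated map extends $T$ we obtain $a\cdot j^*_{(0)}\rho(b)=j^*_{(0)}\big(a\cdot\rho(b)\big)=j^*_{(0)}\rho(a\cdot b)$, using Lemma~\ref{lem:mod_struc} for the last equality.
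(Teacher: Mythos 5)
Your overall architecture matches the paper's proof: check that the two endpoint actions agree on $L=VN_r(G)\cap A(G)$, verify this first on generators $\rho(b)$, $b\in C_{00}(G)^2$, extend to general $x\in L$ via the approximating nets of Proposition~\ref{prop:den_two}, and then interpolate (the paper interpolates the linear maps $T,S$ for each fixed $a$ via Lemma~\ref{lem:when_compat} and Theorem~\ref{thm:inter}, reserving Theorem~\ref{thm:bilin_int} for joint complete contractivity; your one-shot bilinear interpolation with the constant couple $(A(G),A(G))$ achieves the same thing). Your limiting step — the one you flagged as the likely difficulty — is in fact exactly the paper's argument and is sound: $\sigma$-weak convergence of the operator faces plus norm convergence of the functional faces lets the criterion of Proposition~\ref{prop:den_one} pass to the limit.

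The genuine gap is at the generator step, where you claim the $A(G)$-face of $\rho(a\cdot b)$ is $\nabla^{1/2}(a\cdot b)$ ``by Lemma~\ref{lem:int_rrr}''. That lemma only applies to operators of the form $\rho(f^*g)$ with $f,g\in C_{00}(G)$, i.e.\ to $b$ which is a \emph{convolution} product; but the pointwise product $a\cdot b$ of a general $a\in A(G)$ with $b\in C_{00}(G)^2$ is merely a compactly supported continuous function and need not lie in $C_{00}(G)^2$, so the lemma gives you nothing about $\rho(a\cdot b)$ — you cannot even conclude $\rho(a\cdot b)\in L$ this way, and the argument as written is circular at precisely the point that carries the content of the theorem. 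The paper fills this hole with a direct computation: writing $x=\rho(f^*g)$ and $y=\hat\phi_*^{-1}(a)\cdot x$, it verifies by an integral manipulation that $(y(\overline{c})|\overline{d}) = \ip{\rho(c^*d)}{\phi_*^{-1}(a\cdot\phi_*(\varphi_x))}$ for all $c,d\in C_{00}(G)$, and then invokes the membership criterion of Proposition~\ref{prop:den_one} to conclude $y\in L$ with $\varphi_y=\phi_*^{-1}(a\cdot\phi_*(\varphi_x))$. Alternatively, you could repair your version using Proposition~\ref{prop:int_rrr_better}: the function $\nabla^{1/2}(a\cdot b)$ lies in $A(G)$ and its check is compactly supported, hence right bounded, so it lies in $VN_r(G)\cap A(G)$ — but you would still owe the identification of the resulting operator with $\rho(a\cdot b)$, which again comes down to a pairing computation of the same kind. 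So the plan is right and close to the paper's, but the step you treated as a formality is the one requiring a real argument.
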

\begin{proof}
Let $a\in A(G)$ and consider the bounded maps
\[ T:A(G)\rightarrow A(G); b\mapsto a\cdot b, \qquad
S:VN_r(G)\rightarrow VN_r(G); x\mapsto \hat\phi_*^{-1}(a)\cdot x. \]
By Lemma~\ref{lem:when_compat}, we
wish to show that $T$ and $S$ map $L$ to $L$ and agree on $L$.  If this is so,
then we get a map $R\in\mc B(L^p(\hat G))$ which extends $T$ and $S$,
and is bounded by $\|T\|^{1/p} \|S\|^{1/p'} \leq \|a\|$.  Clearly $a\mapsto R$
is a homomorphism, and the resulting action of $A(G)$ on $L^p(\hat G)$ is the one
stated, by Lemma~\ref{lem:mod_struc}.

So, for $x\in L$, we need to show that $y=\hat\phi_*^{-1}(a)\cdot x\in L$
and that furthermore $a\cdot\phi_*(\varphi_x) = \phi_*(\varphi_y)$.
Suppose that $x=\rho(f^*g)$ for $f,g\in C_{00}(G)$, so that from Lemma~\ref{lem:int_rrr},
$\Phi\phi_*(\varphi_x) = \nabla^{1/2} f^*g$.  By Proposition~\ref{prop:den_one},
we have that $y\in L$ if
\[ (y(\overline{c})|\overline{d}) = \ip{\rho(c^*d)}{\phi_*^{-1}(a\cdot\phi_*(\varphi_x))}
\qquad (c,d\in C_{00}(G)). \]
Now, we have that
\begin{align*}
& \ip{\rho(c^*d)}{\phi_*^{-1}(a\cdot\phi_*(\varphi_x))}
= \ip{\rho(c^*d)}{\phi_*^{-1}\Phi^{-1}(a\cdot \nabla^{1/2} f^*g)} \\
&= \int_G c^*d(s) a(s^{-1}) \nabla(s)^{-1/2} (f^*g)(s^{-1}) \ ds 
= \int_G c^*d(s^{-1}) a(s) \nabla(s)^{-1/2} (f^*g)(s) \ ds \\
&= \ip{\hat\phi_*^{-1}(a) \cdot \rho(f^*g)}{\varphi_{\rho(c^*d)}}
= \ip{y}{\omega'_{\overline{c},\overline{d}}} = (y(\overline{c})|\overline{d}),
\end{align*}
using that $\Phi'\phi_*^{-1}\Phi^{-1}$ is the check map.  Hence we are done in the
case that $x\in\rho(C_{00}(G)^2)$.

For general $x\in L$, choose an approximating net $(f_i)\subseteq C_{00}(G)^2$
as in Proposition~\ref{prop:den_two}.  Then, by the previous paragraph,
for $a,b\in C_{00}(G)$,
\[ \ip{y}{\omega'_{\overline{c},\overline{d}}} =
\lim_i \ip{\rho(f_i)}{\omega'_{\overline{c},\overline{d}}}
= \lim_i \ip{\rho(c^*d)}{\phi_*^{-1}(a\cdot\phi_*(\varphi_{\rho(f_i)}))}
= \ip{\rho(c^*d)}{\phi_*^{-1}(a\cdot\phi_*(\varphi_x))}, \]
which completes the proof of the claim, by another application of Lemma~\ref{lem:int_rrr}.

In the completely bounded case, notice that $T$ and $S$ are completely bounded,
and hence also $R$ is, so we get a homomorphism $A(G)\rightarrow\mc{CB}(L^p(\hat G))$.
To see that this is completely bounded, it is easier to prove the equivalent statement
that $A(G)\times L^p(\hat G) \rightarrow L^p(\hat G); (a,\xi)\mapsto R(\xi)$ is jointly
completely contractive, \cite[Chapter~7]{ER}.  That is, for $n\in\mathbb N$, the map
$\mathbb M_n(A(G)) \times \mathbb M_n(L^p(\hat G)) \rightarrow
\mathbb M_{n^2}(L^p(\hat G)); (a_{ij})\times(\xi_{kl}) \mapsto
(R_{ij}(\xi_{kl}))_{(i,k),(j,l)}$ is contractive.  This follows immediately from
Theorem~\ref{thm:bilin_int}, as the analogous statements hold for $T$ and $S$.
\end{proof}

A slightly curious corollary of this proof is that $L$ is an $A(G)$-submodule
of $VN_r(G)$, and hence the image of $L$ in $A(G)$ is a dense ideal.
As a final application of our approximation ideas, we have the following.

\begin{proposition}\label{prop:den_in_lp}
For $1<p<\infty$, we have that $j_{(0)}^* \rho(C_{00}(G)^2)$ is norm dense in
$L^p(\hat G)$.
\end{proposition}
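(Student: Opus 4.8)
The plan is to show that $j^*_{(0)}\rho(C_{00}(G)^2)$ is dense in $L^p(\hat G) = (VN_r(G), A(G))_{[1/p]}$ by exploiting the structure of the complex interpolation space. By Theorem~\ref{thm:inter}, the space $VN_r(G) \cap A(G) = L$ is dense in $(VN_r(G), A(G))_{[1/p]}$ — this is a standard fact about interpolation spaces, namely that $E_0 \cap E_1$ is dense in $(E_0,E_1)_{[\theta]}$ for $0 < \theta < 1$ (see \cite[Theorem~4.2.2]{bergh}). So the first reduction is that it suffices to approximate an arbitrary element $j^*_{(0)}(x)$ with $x \in L$ by elements of the form $j^*_{(0)}\rho(b)$ with $b \in C_{00}(G)^2$.

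Now the key input is Proposition~\ref{prop:den_two}, which provides, for any $x \in L$, a net $(f_i)$ in $C_{00}(G)^2$ with $\sup_i \|\rho(f_i)\| < \infty$, $\rho(f_i) \to x$ $\sigma$-weakly, and $\varphi_{\rho(f_i)} \to \varphi_x$ in norm. The point is that these two convergences control exactly the two endpoint norms: the uniform bound on $\|\rho(f_i)\|$ together with the $\sigma$-weak convergence controls the $VN_r(G)$-norm behaviour, while the norm convergence $\varphi_{\rho(f_i)} \to \varphi_x$ controls the $A(G)$-norm (recall $\|x\|_L = \max(\|x\|_{VN_r(G)}, \|\varphi_x\|_{A(G)})$). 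I would like to convert these into convergence in the interpolation norm $\|\cdot\|_{[1/p]}$.

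The main obstacle is precisely that $\sigma$-weak convergence together with a uniform bound is weaker than norm convergence in $VN_r(G)$, so the net $(\rho(f_i))$ does \emph{not} converge to $x$ in $L$-norm, and I cannot directly invoke continuity of the inclusion $L \to L^p(\hat G)$. To circumvent this, I would use the interpolation inequality from Proposition~\ref{prop:limit}, namely $\|j^*_{(0)}(w)\|_p \leq \|w\|^{1/p'} \|\varphi_w\|^{1/p}$ valid for any $w \in L$. Applying this to the difference $w = \rho(f_i) - x \in L$, I get
\[ \|j^*_{(0)}(\rho(f_i)) - j^*_{(0)}(x)\|_p \leq \|\rho(f_i) - x\|^{1/p'} \|\varphi_{\rho(f_i)} - \varphi_x\|^{1/p}. \]
Here $\|\rho(f_i) - x\| \leq \sup_i \|\rho(f_i)\| + \|x\|$ is uniformly bounded, while $\|\varphi_{\rho(f_i)} - \varphi_x\| \to 0$; since $1/p > 0$, the right-hand side tends to $0$. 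This is exactly the mechanism by which the weaker $\sigma$-weak/bounded control on the $VN_r(G)$ side is absorbed into the bounded factor with the small exponent $1/p'$, and the genuine norm convergence on the $A(G)$ side drives the product to zero. Thus $j^*_{(0)}\rho(f_i) \to j^*_{(0)}(x)$ in $L^p(\hat G)$, and combined with the density of $L$, this establishes the claim. I expect the only subtlety to be confirming that $\rho(f_i) - x$ genuinely lies in $L$ (so that Proposition~\ref{prop:limit}'s inequality applies) and that $\varphi_{\rho(f_i) - x} = \varphi_{\rho(f_i)} - \varphi_x$, both of which follow from linearity of $x \mapsto \varphi_x$ on $L$.
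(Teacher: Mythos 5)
Your argument is correct, but it takes a genuinely different route from the paper. The paper proves the proposition by duality, following \cite[Proposition~6.22]{izumi2}: since $L^p(\hat G)^* = L^{p'}(\hat G)$ sits inside $VN_r(G)+A_r(G) \subseteq L^*$, density of $j^*_{(0)}\rho(C_{00}(G)^2)$ reduces (via Hahn--Banach) to showing that $\rho(C_{00}(G)^2)$ separates the points of $VN_r(G)+A_r(G)$; Proposition~\ref{prop:den_two} is then used to show that any pair $(x,\omega)$ annihilating all $\rho(f)$ with $f\in C_{00}(G)^2$ annihilates every $y\in L$, in particular every $y\in\mf m_{\varphi}$, whence \cite[Proposition~7]{terp} forces $x\in L$ with $\varphi_x=-\omega$, i.e.\ $x+\omega=0$. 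You instead work on the primal side: you invoke the standard density of $E_0\cap E_1 = L$ in the complex interpolation space, and then upgrade the approximations of Proposition~\ref{prop:den_two} to norm convergence in $L^p(\hat G)$ by applying the estimate $\|j^*_{(0)}(w)\|_p \leq \|w\|^{1/p'}\|\varphi_w\|^{1/p}$ (from the proof of Proposition~\ref{prop:limit}, i.e.\ \cite[Corollary~2.8]{staf}) to $w=\rho(f_i)-x$, so that the merely bounded $VN_r(G)$-side is absorbed into the factor with exponent $1/p'$ while the norm-convergent $A(G)$-side drives the product to zero. This is a sound mechanism, and the two small points you flag ($\rho(f_i)-x\in L$ and $\varphi_{\rho(f_i)-x}=\varphi_{\rho(f_i)}-\varphi_x$) are indeed immediate from linearity of $j_{(0)}$ on the subspace $L$. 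Comparing the two: your proof is shorter, quantitative (it yields the explicit rate $C^{1/p'}\|\varphi_{\rho(f_i)}-\varphi_x\|^{1/p}$), and shows in passing that the $\sigma$-weak convergence in Proposition~\ref{prop:den_two} is not needed here — only the uniform bound and the norm convergence of the functionals; the paper's duality route avoids relying on the intersection-density theorem for the complex method and instead leans on Izumi's duality machinery, which is already set up and reused elsewhere in the paper. One nitpick: the density of $E_0\cap E_1$ in $(E_0,E_1)_{[\theta]}$ is not part of Theorem~\ref{thm:inter} as stated in the paper, so your opening citation should point only to \cite[Theorem~4.2.2]{bergh}, as you indeed do immediately afterwards.
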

\begin{proof}
Following the proof of \cite[Proposition~6.22]{izumi2}, it suffices to show that
$\rho(C_{00}(G)^2) \subseteq L$ separates the points of $VN_r(G)+A_r(G) \subseteq L^*$.
Indeed, suppose that $x\in VN_r(G)$ and $\omega\in A_r(G)$ are such that
\[ \ip{x}{\varphi_{\rho(f)}} + \ip{\rho(f)}{\omega} = 0 \qquad (f\in C_{00}(G)^2). \]
For $y\in L$, use Proposition~\ref{prop:den_two} to pick an approximating net $(f_i)$,
so that
\[ 0 = \lim_i \ip{x}{\varphi_{\rho(f_i)}} + \ip{\rho(f_i)}{\omega}
= \ip{x}{\varphi_y} + \ip{y}{\omega}. \]
In particular, this holds for any $y\in\mf m_{\varphi}$, so by \cite[Proposition~7]{terp}
(or, essentially by definition) it follows that $x\in L$ with $\varphi_x = -\omega$.
Hence $x+\omega=0$ in $VN_r(G)+A_r(G)$, as required.
\end{proof}

\begin{proposition}\label{prop:std_hilbert}
There is an isometric isomorphism $\theta:L^2(G) \rightarrow L^2(\hat G)$
satisfying $\theta(f) = j^*_{(0)}\rho(\nabla^{-3/4}\check f)$ for $f\in C_{00}(G)^2$.
Furthermore, $\theta$ intertwines the inner products on $L^2(G)$ and $L^2(\hat G)$.
\end{proposition}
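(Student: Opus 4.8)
The plan is to recognise $L^2(\hat G)=(VN_r(G),A(G))_{[1/2]}$ as the Izumi space $L^2_{(0)}(\varphi')$ of the weight $\varphi'$ on $\mc M=VN_r(G)$ (with $\mc M_*=A_r(G)$ identified with $A(G)$ exactly as in the definition of $L^p(\hat G)$), and then to identify $\theta$ with the isometric isomorphism $k'\colon H_{\varphi'}\to L^2_{(0)}(\varphi')$ supplied by the general theory — the map $k$ constructed above, now applied to $\varphi'$. Under the GNS identification $H_{\varphi'}\cong L^2(G)$ this $k'$ is automatically an isometric isomorphism, and since it was built to satisfy $(\xi|\eta)=(k'(\xi)|k'(\eta))_{2,(0)}$, the inner-product statement comes for free. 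Thus existence, isometry and the intertwining of inner products require no further work, and the entire content of the proposition reduces to verifying the explicit formula $\theta(f)=j^*_{(0)}(\rho(\nabla^{-3/4}\check{f}))$ on $C_{00}(G)^2$.

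For the formula I would feed two concrete facts recorded earlier into $k'(\Lambda'(x))=j^*_{(0)}(\sigma'_{i/4}(x))$: the GNS identification $\Lambda'(\rho(g))=K(g)=\check{g}\,\nabla^{-1/2}$, and the modular formula $\sigma'_{i/4}(\rho(g))=\rho(\nabla^{-1/4}g)$. Given $f$, set $g=\check{f}\,\nabla^{-1/2}$, so that $\Lambda'(\rho(g))=\check{g}\,\nabla^{-1/2}=f$; then $\sigma'_{i/4}(\rho(g))=\rho(\nabla^{-1/4}\check{f}\,\nabla^{-1/2})=\rho(\nabla^{-3/4}\check{f})$, and hence $k'(f)=j^*_{(0)}(\rho(\nabla^{-3/4}\check{f}))=\theta(f)$, exactly as claimed.

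The main obstacle is a domain subtlety in this last step. The identity $k'(\Lambda'(x))=j^*_{(0)}(\sigma'_{i/4}(x))$ is stated only for $x$ in the \emph{square} of the maximal Tomita algebra, and for $f\in C_{00}(G)^2$ the element $g=\check{f}\,\nabla^{-1/2}$ need not lie in the convolution span $C_{00}(G)^2$, so $\rho(g)$ need not sit in $(\mf a'_0)^2$. What the computation gives cleanly is the formula only on the dense subspace $\{\check{g}\,\nabla^{-1/2}:g\in C_{00}(G)^2\}$, where $\rho(g_1)\rho(g_2)=\rho(g_1g_2)$ puts $\rho(g)$ into $(\mf a'_0)^2$, using that $C_{00}(G)$ is a generating Tomita subalgebra (established in the proof of Proposition~\ref{prop:den_one}). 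To pass from this subspace to $C_{00}(G)^2$ itself I would first check, via Proposition~\ref{prop:int_rrr_better} — the operator $\rho(\nabla^{-3/4}\check{f})$ is right convolution by the compactly supported, hence right bounded, function $\nabla^{1/4}f$ — that $\rho(\nabla^{-3/4}\check{f})\in L$, so that the right-hand side is at least well defined. I would then pin down its value by testing against the dense set of Proposition~\ref{prop:den_in_lp}: using the sesquilinear pairing $(j^*_{(0)}(x)|j^*_{(0)}(y))_{2,(0)}=\ip{y^*}{\varphi'_x}$ and Lemma~\ref{lem:int_rrr} to evaluate $\varphi'_{\rho(\nabla^{-3/4}\check{f})}$, together with the already-known intertwining $(\theta(f)|\theta(\eta))_{2,(0)}=(f|\eta)_{L^2(G)}$, one matches the two vectors on $C_{00}(G)^2$ and concludes that $\theta(f)=j^*_{(0)}(\rho(\nabla^{-3/4}\check{f}))$ there.
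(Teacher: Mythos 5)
Your proposal is correct and takes the same route as the paper: the paper's proof consists of exactly your first two paragraphs --- identify $L^2(\hat G)$ with Izumi's $L^2_{(0)}(\varphi')$, invoke the isometry $k$ satisfying $k(\Lambda'\rho(g)) = j^*_{(0)}\sigma'_{i/4}\rho(g)$ on $\rho(C_{00}(G)^2)$, and compute the formula from $\Lambda'\rho(g)=Kg$ and $\sigma'_{i/4}\rho(g)=\rho(\nabla^{-1/4}g)$, the intertwining of inner products coming for free from the general theory.

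The ``main obstacle'' of your third paragraph is, however, illusory, so the extension machinery via Propositions~\ref{prop:int_rrr_better} and~\ref{prop:den_in_lp} is unnecessary. Since $\nabla$ is multiplicative, a direct computation (using $\nabla(t)^{-1/2}\nabla(t^{-1}s)^{-1/2}=\nabla(s)^{-1/2}$) gives $K(f_1f_2)=(Kf_2)(Kf_1)$ for $f_1,f_2\in C_{00}(G)$, products denoting convolution; as $K^2=\id$, the map $K$ carries $C_{00}(G)^2$ \emph{onto} itself. Hence for $f\in C_{00}(G)^2$ the element $g=Kf=\check f\,\nabla^{-1/2}$ \emph{does} lie in $C_{00}(G)^2$, so $\rho(g)$ is a product of two elements of the Tomita algebra $\rho(C_{00}(G))\subseteq\mf a_0'$, and the dense subspace $\{\check g\,\nabla^{-1/2}:g\in C_{00}(G)^2\}$ on which you verify the formula ``cleanly'' is already all of $C_{00}(G)^2$. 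Your fallback density argument is not wrong and could be completed --- the identifications you need, such as $\nabla^{1/4}h\in C_{00}(G)^2$ for $h\in C_{00}(G)^2$, follow from $\nabla^{\alpha}(g_1g_2)=(\nabla^{\alpha}g_1)(\nabla^{\alpha}g_2)$ --- but it establishes nothing that the direct computation does not already give.
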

\begin{proof}
In Section~\ref{sec:second}, we discussed the isometric isomorphism
$k:H_{\varphi'}\rightarrow L^2(\hat G)$ which is such that
$k(\Lambda'\rho(f)) = j^*_{(0)}\sigma_{i/4}\rho(f)$ for $f\in C_{00}(G)^2$.
If we identify $H_{\varphi'}$ with $L^2(G)$, then $\Lambda'\rho(f) = Kf$, and so we
find a map $\theta$ which satisfies $\theta(f) = j^*_{(0)}\sigma_{i/4}\rho(Kf)
= j^*_{(0)}\rho(\nabla^{-3/4}\check f)$ for $f\in C_{00}(G)^2$.
\end{proof}

Notice that $L^2(G)$ carries a natural \emph{bilinear} product, $\ip{f}{g} = \int_G f g$ for
$f,g\in L^2(G)$.  Similarly $L^2(\hat G)$ has the bilinear product $\ip{\cdot}{\cdot}_{2,(0)}$,
but $\theta$ does not intertwine these products.

\subsection{Comparison with Forrest, Lee and Samei}

In \cite[Section~6]{fls}, a different construction of non-commutative $L^p$ spaces associated
to $A(G)$ is given.  We shall compare their construction to ours.

Firstly, they form the non-commutative $L^p$ space using $VN(G)$, using
Izumi's work with $\alpha=-1/2$.  Let $\mc OL^p_{(-1/2)}(VN(G))$ be the operator space
version, given by interpolating $VN(G)$ and $A(G)^\op$.  Here we write $(-1/2)$ to
indicate the choice of $\alpha$.  Then they define
\[ L^p(VN(G)) = \begin{cases} \mc O L^p_{(-1/2)}(VN(G))^\op &: 1< p\leq 2, \\
\mc O L^p_{(-1/2)}(VN(G)) &: 2\leq p<\infty. \end{cases} \]
Recall that the Hilbert space $\mc OL^2_{(-1/2)}(VN(G))$ will carry the operator Hilbert space
structure, so that $\mc OL^2_{(-1/2)}(VN(G)) = \mc OL^2_{(-1/2)}(VN(G))^\op$.

By \cite[Theorem~6.3]{fls}, for $1<p\leq 2$, the $A(G)$ module structure on $L^p(VN(G))$
satisfies
\[ a \cdot j^*_{(1/2)}(\lambda(\check f)) = j^*_{(1/2)}(\lambda(\check a \cdot \check f))
\qquad (a\in A(G), f\in C_{00}(G)). \]
Here we use a different, but equivalent, notation to that of \cite{fls}.
Similarly, by \cite[Theorem~6.4]{fls}, for $2\leq p<\infty$, the module action of $A(G)$ on
$L^p(VN(G))$ is
\[ a\cdot j^*_{(1/2)}(\lambda(f)) = j^*_{(1/2)}(\lambda(a\cdot f))
\qquad (a\in A(G), f\in C_{00}(G)). \]

Recall the isometric isomorphism $U_{p,(0,-1/2)}:L^p_{(-1/2)}(VN(G))
\rightarrow L^p_{(0)}(VN(G))$ which satisfies, in particular,
\[ U_{p,(0,-1/2)}( j^*_{(1/2)}\lambda(f) ) = j^*_{(0)}(\sigma_{i/2p}\lambda(f))
= j^*_{(0)} \lambda(\Delta^{-1/2p} f) \qquad (f\in C_{00}(G)^2). \]
For $1<p\leq 2$, we can hence regard $L^p(VN(G))$ as $\mc OL^p_{(0)}(VN(G))^\op$
with the module action
\begin{align*} a \cdot j^*_{(0)}\lambda(f)
&= U_{p,(0,-1/2)}( a\cdot U_{p,(0,-1/2)}^{-1}j^*_{(0)}\lambda(f))
=  U_{p,(0,-1/2)}( a\cdot j^*_{(0)}\lambda(\Delta^{1/2p}f)) \\
&= U_{p,(0,-1/2)} j^*_{(0)}\lambda(\check a \cdot \Delta^{1/2p}f)
= j^*_{(0)}\lambda(\check a \cdot f) . \end{align*}
for $a\in A(G)$ and $f\in C_{00}(G)^2$.  Similarly, for $2\leq p<\infty$, we
regard $L^p(VN(G))$ as $\mc OL^p_{(0)}(VN(G))$ with the module action
\[ a\cdot j^*_{(0)}\lambda(f) = j^*_{(0)}\lambda(a\cdot f)
\qquad (a\in A(G), f\in C_{00}(G)^2). \]

\begin{proposition}
For $2\leq p<\infty$, there exists a completely isometric isomorphism
$\hat\phi_p: L^p(VN(G)) \rightarrow L^p(\hat G)$
which is also an $A(G)$-module homomorphism, with
\[ \hat\phi_p\big( j^*_{(0)} \lambda(f)\big) = j^*_{(0)}\rho(f) \qquad (f\in C_{00}(G)^2). \]
\end{proposition}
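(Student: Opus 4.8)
The plan is to obtain $\hat\phi_p$ by complex interpolation of a single morphism of compatible couples, in the spirit of Theorem~\ref{thm:ag_mod_action}. For $2\le p<\infty$ both spaces are realisations of the standard operator space $L^p$ with $\alpha=0$: on one side $L^p(VN(G)) = \mc OL^p_{(0)}(VN(G)) = (VN(G), A(G)^\op)_{[1/p]}$, and on the other $L^p(\hat G) = (VN_r(G), A(G))_{[1/p]}$, where the predual slot is $A_r(G)$ displayed through $\phi_*$. The natural endpoint map at the von Neumann algebra level is the normal $*$-isomorphism $\hat\phi:VN(G)\to VN_r(G),\ x\mapsto KxK$ introduced above, which is completely isometric and satisfies $\hat\phi(\lambda(f)) = \rho(f)$; its predual $\hat\phi_*:A_r(G)\to A(G)$ is a completely isometric isomorphism supplying the endpoint map at the $L^1$ level.

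First I would check that $\hat\phi$ is weight-preserving, $\varphi'\circ\hat\phi=\varphi$. It suffices to test this on the $\sigma$-weakly dense set $\lambda(C_{00}(G)^2)$: there $\hat\phi(\lambda(h)) = \rho(h)$, while $\varphi(\lambda(h)) = h(e_G)$ and, by the right-hand analogue of this formula (which follows from $\varphi'(x) = \varphi(Jx^*J)$ after a change of variables involving $\nabla$), also $\varphi'(\rho(h)) = h(e_G)$. Being a weight-preserving normal $*$-isomorphism, $\hat\phi$ carries the Terp intersection $L_{VN(G)}$ bijectively onto $L = L_{VN_r(G)}$, and for $x\in L_{VN(G)}$ we have $\hat\phi(x)\in L$ with the attached functional transported by $\hat\phi_*^{-1}$. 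To see that the two endpoint maps agree on the intersection $VN(G)\cap A(G)^\op = L_{VN(G)}$, I would verify this on $\lambda(C_{00}(G)^2)$ by computing the attached functionals explicitly via Lemma~\ref{lem:int_rrr} and its evident $VN(G)$-analogue, and then pass to all of $L_{VN(G)}$ using the approximation result of Proposition~\ref{prop:den_two} transported to $VN(G)$.

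With both endpoints complete isometries that agree on the intersection, Lemma~\ref{lem:when_compat} assembles them into a morphism of the matrix-level compatible couples, and the completely bounded form of Theorem~\ref{thm:inter} produces a complete contraction $\hat\phi_p:L^p(VN(G))\to L^p(\hat G)$ with $\hat\phi_p(j^*_{(0)}\lambda(f)) = j^*_{(0)}\rho(f)$ for $f\in C_{00}(G)^2$. Running the same construction for $\hat\phi^{-1}$ gives a complete contraction in the opposite direction; the two composites act as the identity on $j^*_{(0)}\lambda(C_{00}(G)^2)$ and $j^*_{(0)}\rho(C_{00}(G)^2)$, which are norm dense by Proposition~\ref{prop:den_in_lp}, so $\hat\phi_p$ is a completely isometric isomorphism. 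The module property is then immediate on the dense subspace: for $a\in A(G)$ and $f\in C_{00}(G)^2$ we have $\hat\phi_p(a\cdot j^*_{(0)}\lambda(f)) = \hat\phi_p(j^*_{(0)}\lambda(a\cdot f)) = j^*_{(0)}\rho(a\cdot f) = a\cdot j^*_{(0)}\rho(f) = a\cdot\hat\phi_p(j^*_{(0)}\lambda(f))$, using the module formula for $2\le p<\infty$ together with Theorem~\ref{thm:ag_mod_action}, and continuity extends it to all of $L^p(VN(G))$.

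The main obstacle is the modular-theoretic bookkeeping in the second paragraph: confirming that $\hat\phi$ transports the intersection data so that the two endpoint maps agree on all of $L_{VN(G)}$ rather than merely up to a modular twist, and that the operator-space conventions on the two predual slots (one carrying $A(G)^\op$, the other $A_r(G)$ via $\phi_*$) are aligned so that $\hat\phi_*$ is genuinely a matrix-level complete isometry between them. This matching is clean precisely because both realisations use $\alpha=0$; the same step with the opposite operator-space structure is what forces the case split at $p=2$ and restricts this proposition to $2\le p<\infty$.
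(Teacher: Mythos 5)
Your proposal is correct and takes essentially the same route as the paper: both interpolate the endpoint maps $\hat\phi$ and $\hat\phi_*^{-1}$ between the couples $(VN(G),A(G)^\op)$ and $(VN_r(G),A_r(G)^\op)$ via Lemma~\ref{lem:when_compat} and the completely bounded form of Theorem~\ref{thm:inter}, obtain the complete isometry by symmetry, and deduce the module property from the density of $j^*_{(0)}\rho(C_{00}(G)^2)$ (Proposition~\ref{prop:den_in_lp}). The only divergence is in one sub-step: where you check compatibility on the intersection via weight-preservation of $\hat\phi$ and functoriality (or by verifying on $\lambda(C_{00}(G)^2)$ and approximating with a transported Proposition~\ref{prop:den_two}), the paper handles every $x\in L$ at once by a short direct computation of $(\hat\phi(x)\overline{a}|\overline{b})$ against $C_{00}(G)$ vectors, invoking the criterion of Proposition~\ref{prop:den_one}, which avoids both the weight computation and any approximation argument.
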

\begin{proof}
Our $L^p(\hat G)$ spaces are formed by interpolating $VN_r(G)$ and $A_r(G)^\op$
(identified with $A(G)$).
Consider again the maps $\hat\phi:VN(G) \rightarrow VN_r(G)$ and
$\hat\phi_*^{-1}:A(G)\rightarrow A_r(G)$.  We claim that these are compatible, that is,
map $L_{(0)}$, for $(VN(G),A(G))$, into $L_{(0)}$, for $(VN_r(G),A_r(G))$.  Indeed,
let $x\in L_{(0)} \subseteq VN(G)$ with associated $\varphi_x\in A(G)$.  Let $a,b\in C_{00}(G)$,
so that
\begin{align*} (\hat\phi(x) \overline a| \overline b) &=
(x K(\overline a)| K(\overline b)) = (xJ(a)|J(b))
= (xJ\Lambda(\lambda(a))| J\Lambda(\lambda(b)) \\
&= \ip{\lambda(a^*b)}{\varphi_x}
= \ip{K\rho(a^*b)K}{\varphi_x}
= \ip{\rho(a^*b)}{\hat\phi_*^{-1}(\varphi_x)}, \end{align*}
so by Proposition~\ref{prop:den_one} we see that $\hat\phi(x)\in L_{(0)}$, for $(VN_r(G),A_r(G)$,
with $\varphi_{\hat\phi(x)} = \hat\phi_*^{-1}(\varphi_x)$.  Consequently, by
Lemma~\ref{lem:when_compat}, we can
interpolate these maps, leading to a contraction
\[ \hat\phi_p : L^p_{(0)}(VN(G)) \rightarrow L^p_{(0)}(VN_r(G)) = L^p(\hat G). \]
As $\hat\phi_*^{-1}$ is also a complete isometry $A(G)^\op\rightarrow A_r(G)^\op$,
we see that $\hat\phi_p$ is even a complete contraction.
By symmetry, we also have a complete contraction in the other direction, showing that
$\hat\phi_p$ is actually a completely isometric isomorphism.  In particular,
\[ \hat\phi_p j^*_{(0)} \lambda(f) = j^*_{(0)} \big( K\lambda(f)K \big)
= j^*_{(0)} \rho(f) \qquad (f\in C_{00}(G)^2). \]
It is now clear from Proposition~\ref{prop:den_in_lp} that this map is an $A(G)$-module
homomorphism.
\end{proof}

\begin{proposition}
For $1< p\leq 2$, there exists a completely isometric isomorphism
$\phi_p: L^p(VN(G)) \rightarrow L^p(\hat G)$
which is also an $A(G)$-module homomorphism, with
\[ \phi_p\big( j^*_{(0)} \lambda(f)\big) = j^*_{(0)}\rho(\check f\nabla^{-1})
\qquad (f\in C_{00}(G)^2). \]
\end{proposition}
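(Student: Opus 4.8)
The plan is to run the proof of the previous proposition again, with the $*$-isomorphism $\hat\phi$ replaced by $\phi : VN(G)^{\op} \to VN_r(G)$, $x \mapsto Jx^*J$. First I would record the relevant couples. After the change of variable $U_{p,(0,-1/2)}$ carried out above, for $1 < p \leq 2$ the space $L^p(VN(G))$ equals $\mc OL^p_{(0)}(VN(G))^{\op}$, and since taking the opposite operator space structure commutes with complex interpolation, this is obtained by interpolating the couple $(VN(G)^{\op}, A(G))$; its $A(G)$-action is $a \cdot j^*_{(0)}\lambda(f) = j^*_{(0)}\lambda(\check a \cdot f)$. The target $L^p(\hat G)$ is obtained by interpolating $VN_r(G)$ and $A_r(G)^{\op}$, the latter identified completely isometrically with $A(G)$ through $\phi_*$. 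The map relating the two first factors is the $*$-isomorphism $\phi$, which is automatically a complete isometry, and the corresponding second-factor map is $\phi_*^{-1} : A(G) \to A_r(G)^{\op}$, also a complete isometry.

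The crux is to verify, as in Lemma~\ref{lem:when_compat}, that $\phi$ and $\phi_*^{-1}$ are compatible: that $\phi$ sends $L = L_{(0)}$ for $(VN(G),A(G))$ into $L$ for $(VN_r(G),A_r(G))$, with $\varphi_{\phi(x)} = \phi_*^{-1}(\varphi_x)$. Fixing $x \in L$ and $a,b \in C_{00}(G)$, I would use Proposition~\ref{prop:den_one}, so it suffices to prove $(\phi(x)\overline a|\overline b) = \ip{\rho(a^*b)}{\phi_*^{-1}(\varphi_x)}$. Since $J\overline a = Ka$, anti-unitarity of $J$ gives $(\phi(x)\overline a|\overline b) = (xKb|Ka) = (xJ\overline b|J\overline a)$, which by the defining property of $L$ equals $\ip{\lambda(\overline b)^*\lambda(\overline a)}{\varphi_x} = \ip{\lambda((\overline b)^\sharp\overline a)}{\varphi_x}$. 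From $\lambda(g)^* = \lambda(g^\sharp)$ and $J\lambda(g)J = \rho(\overline g)$ one gets $\phi(\lambda(g)) = \rho(\check g \nabla^{-1})$ and hence $\phi^{-1}(\rho(h)) = \lambda(\check h \nabla^{-1})$; a computation with the modular function then shows $(\overline b)^\sharp \overline a = \check{(a^*b)}\nabla^{-1}$, i.e. $\lambda((\overline b)^\sharp \overline a) = \phi^{-1}(\rho(a^*b))$, and pairing with $\varphi_x$, using that $\phi_*^{-1}$ is the predual of $\phi^{-1}$, yields exactly $\ip{\rho(a^*b)}{\phi_*^{-1}(\varphi_x)}$. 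This Haar-measure manipulation is where I expect the real work to lie.

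With compatibility in hand, the operator space form of Theorem~\ref{thm:inter} produces a complete contraction $\phi_p : L^p(VN(G)) \to L^p(\hat G)$; running the same argument with $\phi^{-1}$ gives a complete contraction in the reverse direction inverse to $\phi_p$, so $\phi_p$ is a completely isometric isomorphism. On $L$ the map $\phi_p$ acts as $\phi$, so for $f \in C_{00}(G)^2$ we get $\phi_p(j^*_{(0)}\lambda(f)) = j^*_{(0)}(J\lambda(f)^*J) = j^*_{(0)}\rho(\check f \nabla^{-1})$, as required.

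Finally, for the module property I would argue by density. Both $\xi \mapsto \phi_p(a\cdot\xi)$ and $\xi \mapsto a\cdot\phi_p(\xi)$ are bounded, so by Proposition~\ref{prop:den_in_lp} (and its evident analogue for $VN(G)$) it is enough to compare them on the generators $j^*_{(0)}\lambda(f)$. Using the source action, $\phi_p(a\cdot j^*_{(0)}\lambda(f)) = \phi_p(j^*_{(0)}\lambda(\check a\cdot f)) = j^*_{(0)}\rho(\check{(\check a\cdot f)}\nabla^{-1}) = j^*_{(0)}\rho(a\cdot(\check f \nabla^{-1}))$, since the check map is multiplicative for pointwise products; using the target action of Theorem~\ref{thm:ag_mod_action}, $a\cdot\phi_p(j^*_{(0)}\lambda(f)) = a\cdot j^*_{(0)}\rho(\check f\nabla^{-1}) = j^*_{(0)}\rho(a\cdot(\check f\nabla^{-1}))$. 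The two coincide, so $\phi_p$ is an $A(G)$-module homomorphism; the pleasant point is that the check map built into $\phi$ exactly absorbs the $\check a$ twist in the source action, matching the untwisted action on the target.
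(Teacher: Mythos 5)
Your proposal is correct and takes essentially the same route as the paper: identify $L^p(VN(G))$ for $1<p\leq 2$ with $(VN(G)^\op, A(G))_{[1/p]}$, check that $\phi$ and $\phi_*^{-1}$ are compatible on $L$ with $\varphi_{\phi(x)}=\phi_*^{-1}(\varphi_x)$, interpolate (with the inverse pair giving the reverse complete contraction), compute $\phi(\lambda(f))=J\lambda(f^*)J=\rho(\check f\nabla^{-1})$ on generators, and conclude the module property by density via Proposition~\ref{prop:den_in_lp}. The only divergence is in the compatibility check, where the paper computes directly in standard form with $y',z'\in\mf n_{\varphi'}$ and $\varphi_x=\omega_{\xi,\eta}$, while you route through Proposition~\ref{prop:den_one} and the (correct) Haar-measure identity $(\overline b)^\sharp\,\overline a=\check{(a^*b)}\nabla^{-1}$ --- precisely the technique the paper itself uses in the companion proposition for $2\leq p<\infty$, so this is a cosmetic rather than substantive difference.
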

\begin{proof}
For $1<p\leq 2$, it is clear that $L^p(VN(G)) = \mc OL^p_{(0)}(VN(G))^\op =
( VN(G)^\op, A(G) )_{[1/p]}$.  The idea now is to replicate the proof above,
but using instead the maps $\phi:VN(G)^\op \rightarrow VN_r(G)$ and
$\phi_*^{-1}:A(G) \rightarrow A_r(G)^\op$.  For $x\in L \subseteq VN(G)$, let
$\varphi_x = \omega_{\xi,\eta}$ for some $\xi,\eta\in L^2(G)$.  Let $y',z'\in
\mf n_{\varphi'}$ so that $y=Jy'J, z=Jz'J \in \mf n_{\varphi}$ and
\begin{align*} (\phi(x) J\Lambda'(y') | J\Lambda'(z'))
&= (Jx^*J \Lambda(y)|\Lambda(z)) = \ip{z^*y}{\varphi_x}
= (z^*y\xi|\eta) = (J (z')^* y' J\xi|\eta) \\
&= ((y')^*z'J\eta|J\xi) = \ip{(y')^*z'}{\phi_*^{-1}(\varphi_x)}. \end{align*}
Hence $\phi(x)\in L\subseteq VN_r(G)$ with $\varphi_{\phi(x)} = \phi_*^{-1}(\varphi_x)$.
Again, we interpolate to find a completely isometric isomorphism
\[ \phi_p:L^p(VN(G)) \rightarrow L^p(\hat G). \]
We then see that for $f\in C_{00}(G)^2$,
\[ \phi_p j^*_{(0)} \lambda(f) = j^*_{(0)} \phi(\lambda(f))
= j^*_{(0)}\big( J \lambda(f^*) J\big)
= j^*_{(0)} \rho( \check f \nabla^{-1} ). \]
It is now clear from Proposition~\ref{prop:den_in_lp} that this map is an $A(G)$-module
homomorphism.
\end{proof}

\subsection{Application to homological questions}

The following is an improvement of \cite[Proposition~6.8]{fls}, which
only showed the result for $p\geq 2$.

\begin{proposition}
Let $G$ be a non-discrete group, and let $1<p<\infty$.  Then the only bounded
left $A(G)$-module homomorphism $L^p(\hat G) \rightarrow A(G)$ is the zero map.
\end{proposition}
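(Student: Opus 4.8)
The plan is to exploit the density result in Proposition~\ref{prop:den_in_lp}, which tells us that $j^*_{(0)}\rho(C_{00}(G)^2)$ is norm dense in $L^p(\hat G)$. Since any bounded module homomorphism $T:L^p(\hat G)\to A(G)$ is determined by its values on this dense subspace, it suffices to understand $T(j^*_{(0)}\rho(b))$ for $b\in C_{00}(G)^2$, and to show these all vanish. The key structural input is the module action described in Theorem~\ref{thm:ag_mod_action}: for $a\in A(G)$ and $b\in C_{00}(G)^2$ we have $a\cdot j^*_{(0)}\rho(b) = j^*_{(0)}\rho(a\cdot b)$, where $a\cdot b$ is \emph{point-wise} multiplication. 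The module homomorphism property then reads $T(j^*_{(0)}\rho(a\cdot b)) = a\cdot T(j^*_{(0)}\rho(b))$, where the right-hand action is the ordinary (point-wise) product in $A(G)$ viewed as functions.

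First I would reformulate everything in terms of functions on $G$. Define $\psi(b) = T(j^*_{(0)}\rho(b)) \in A(G)$ for $b\in C_{00}(G)^2$; this is a continuous function on $G$. The homomorphism identity becomes the functional equation
\[ \psi(a\cdot b) = a\cdot\psi(b) \qquad (a\in A(G),\ b\in C_{00}(G)^2), \]
an identity of point-wise products of functions on $G$. The plan is to localize: fix a point $s\in G$ and choose $a\in A(G)$ with $a(s)=1$ that is supported near $s$. Evaluating at $s$ gives $\psi(a\cdot b)(s) = \psi(b)(s)$, so the value $\psi(b)(s)$ depends only on the germ of $b$ at $s$, and in fact, by picking $a$ vanishing at $s$ while $a\cdot b = b'$ for suitable $b,b'$ agreeing near $s$, one shows $\psi(b)(s)$ depends only on $b(s)$. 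This should force $\psi(b)(s) = c(s)\, b(s)$ for some function $c$ on $G$, i.e.\ $T$ must act as multiplication by a fixed function $c$, which (as $T$ maps into $A(G)$) would be a multiplier. The crux is then non-discreteness.

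The main obstacle, and the place where non-discreteness enters, is controlling the \emph{norm} of $T$ against the point-wise multiplier representation. A multiplication-by-$c$ map $b\mapsto c\cdot b$ can only be bounded $L^p(\hat G)\to A(G)$ if $c$ has very strong decay/regularity, and for a non-discrete group the relevant non-commutative $L^p$ norm is genuinely weaker than the $A(G)$ norm in a way that precludes any nonzero $c$. Concretely I would test $\psi$ on a net $(b_i)$ concentrated near a fixed point: by Proposition~\ref{prop:limit} (and its proof), the $L^p(\hat G)$-norm of $j^*_{(0)}\rho(b_i)$ scales like an interpolation between the $VN_r(G)$-operator norm and the $A(G)$-norm, and for functions concentrating at a point on a non-discrete group the operator norm is controlled by something much smaller than the point-wise supremum. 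Pushing a sequence $b_i$ with $b_i(s)=1$ but $\|j^*_{(0)}\rho(b_i)\|_p \to 0$ through the bounded map $T$ forces $\psi(b_i)(s)\to 0$, hence $c(s)=0$; since $s$ was arbitrary, $c\equiv 0$ and $T=0$.

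I expect the hard part to be the last step: producing the concentrating net with $L^p(\hat G)$-norm tending to zero while the point-value stays fixed, and justifying that the multiplier $c$ is well-defined and measurable before taking the limit. This is precisely where one needs $G$ non-discrete (on a discrete group point masses have unit norm in every $L^p$, and the statement genuinely fails). The cleanest route is likely to avoid extracting $c$ explicitly and instead argue directly: show that for each fixed $b$, applying the module relation with a bounded approximate-type net $(a_i)$ for $A(G)$ concentrated away from the support behaviour forces $\psi(b)=0$ outright, using non-discreteness to make the relevant $L^p(\hat G)$-norms collapse. Either way, the quantitative estimate linking the $A(G)$-norm of $\psi(b)$ to the vanishing $L^p(\hat G)$-norm of its preimage is the technical heart of the argument.
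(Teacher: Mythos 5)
You have assembled the correct raw materials --- density of $j^*_{(0)}\rho(C_{00}(G)^2)$ (Proposition~\ref{prop:den_in_lp}), the pointwise module action (Theorem~\ref{thm:ag_mod_action}), the interpolation estimate $\|j^*_{(0)}(y)\|_p\le\|y\|^{1/p'}\|\varphi_y\|^{1/p}$ from Proposition~\ref{prop:limit}, and non-discreteness entering through shrinking bumps --- but your argument has a genuine gap at exactly the point you flag, and the route you choose makes the gap harder to close than necessary. The functional equation $\psi(a\cdot b)=a\cdot\psi(b)$ only shows that $\psi(b)(s)$ depends on the \emph{germ} of $b$ at $s$: if $b$ vanishes on a neighbourhood of $s$, choose $a$ supported there with $a(s)=1$ to get $\psi(b)(s)=0$. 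Upgrading germ-dependence to dependence on the single value $b(s)$, so as to extract your multiplier function $c$, is not automatic --- it already requires pushing localized elements $a_i\cdot b$ (shrinking bumps $a_i$ with $a_i(s)=1$, $\|a_i\|_{A(G)}\le 1$) through $T$ and estimating their $L^p(\hat G)$-norms, i.e.\ precisely the quantitative step you defer to the end. And that step is never supplied: one needs concrete bumps $b=\omega_{\alpha,\beta}$ built from normalized characteristic functions, with $b(r)=1$, $\|b\|_{A(G)}\le 1$ and support in $Vr$, realized inside $L$ via Proposition~\ref{prop:int_rrr_better} as $b=\phi_*(\varphi_y)$ with $y=\rho(\nabla^{-1/2}b)$, so that $\|y\|\le\|\nabla^{-1}b\|_1\le |Vr|\,\|\nabla^{-1}|_{Vr}\|_\infty=K(V)$ and hence $\|j^*_{(0)}(y)\|_p\le K(V)^{1/p'}\to 0$ as $V$ shrinks, while $\|\varphi_y\|$ stays bounded. (Note also that writing $j^*_{(0)}\rho(b_i)$ with ``$b_i(s)=1$'' conflates the operator side and the function side of Lemma~\ref{lem:int_rrr}, which differ by a factor $\nabla^{1/2}$ and a check map.)

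The paper's proof sidesteps locality entirely by a commutativity swap, which your plan misses. Fix $\xi=j^*_{(0)}(x)$ with $T(\xi)\neq 0$ and set $a=\phi_*(\varphi_x)$; let $\eta=j^*_{(0)}(y)$ with $b=\phi_*(\varphi_y)$ the small bump. Since $A(G)$ is commutative and $j_{(0)}$ is injective on $L$, one has $a\cdot\eta=b\cdot\xi$ in $L^p(\hat G)$, whence $a\cdot T(\eta)=T(a\cdot\eta)=T(b\cdot\xi)=b\cdot T(\xi)$, and then, because $b(r)=1$, $|T(\xi)(r)|\le\|b\cdot T(\xi)\|_{A(G)}=\|a\cdot T(\eta)\|_{A(G)}\le\|a\|_{A(G)}\,\|T\|\,K(V)^{1/p'}$. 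The fixed function $a$ absorbs the small element $\eta$, while the small function $b$ hits the fixed element $T(\xi)$: no multiplier $c$ is ever extracted, so no well-definedness, measurability, or germ-versus-value issues arise. Your ``direct'' fallback could in principle be completed along similar lines, but as written the proposal stops precisely at its self-acknowledged technical heart, and the multiplier-extraction detour adds obstructions that the swap renders unnecessary.
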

\begin{proof}
Let $T:L^p(\hat G)\rightarrow A(G)$ be a bounded left $A(G)$-module homomorphism,
and suppose towards a contradiction that $T$ is not zero.  By density, we can
find $x\in L$, such that setting $\xi = j^*_{(0)}(x)$, we have that $T(\xi)\not=0$.
Let $a = \phi_*(\varphi_x) \in A(G)$.  For $y\in L$, let $\eta=j^*_{(0)}(y)$
and $b = \phi_*(\varphi_y)$.  Then, with reference to Theorem~\ref{thm:ag_mod_action},
$z = \hat\phi_*^{-1}(a)\cdot y\in L$ with $\phi_*(\varphi_z) = a \cdot \chi_*(\varphi_y)
= ab = ba = b \cdot \phi_*(\varphi_x) = \phi_*( \hat\phi_*^{-1}(b) \cdot x)$.  Thus
\[ a\cdot T(\eta) = Tj^*_{(0)}( z )
= Tj^*_{(0)}( \hat\phi_*^{-1}(b) \cdot x )
= b \cdot T(\xi). \]

Let $V$ be a compact neighbourhood of the identity in $G$, so that $0<|V|<\infty$.
Let $K$ be a compact neighbourhood of the identity with $KK^{-1} \subseteq V$,
let $r\in G$, let $\alpha = |K|^{-1/2} \chi_{r^{-1}K}\in L^2(G)$ and $\beta
= |K|^{-1/2} \chi_K \in L^2(G)$.  Then $\|\alpha\|_2 = \|\beta\|_2 = 1$, and so
$b = \omega_{\alpha,\beta}\in A(G)$ with $\|b\|_{A(G)}\leq 1$.  We see that
\[ b(s) = \frac{1}{|K|} \int \chi_{r^{-1}K}(s^{-1}t) \chi_K(t) \ dt
= \frac{|sr^{-1}K \cap K|}{|K|} \qquad (s\in G). \]
So $b(r)=1$ and $b(s)\not=0$ implies that $s \in KK^{-1}r \subseteq Vr$.  So
$b$ has compact support and is bounded, and hence $b\in L^1(G)$ with
$\|b\|_1 \leq |Vr|$.  By Proposition~\ref{prop:int_rrr_better},
$b = \phi_*(\varphi_y)$ where $y\in L$ with $y(f) = f\check b$ for $f\in C_{00}(G)$.
We can check that actually $y=\rho(\nabla^{-1/2}b)$, so that $\|y\| \leq
\| \nabla^{-1}b \|_1 \leq |Vr| \|\nabla^{-1}|_{Vr}\|_\infty = K(V)$ say.
By the estimate in Proposition~\ref{prop:limit} we see that
\[ \|j^*_{(0)}(y)\|_p \leq \|y\|^{1/p'} \|\varphi_y\|^{1/p} \leq K(V)^{1/p'}. \]
With $\eta = j^*_{(0)}(y)$, we hence see that
\[ |T(\xi)(r)| \leq  \| b \cdot T(\xi)\|_{A(G)} = \|a \cdot T(\eta)\|_{A(G)}
\leq \|a\|_{A(G)} \|T\| K(V)^{1/p'}. \]
In particular, we can make $K(V)$ as small as we like by choosing $V$ small
(as $G$ is not discrete).  As $r$ was arbitrary, we conclude that $T(\xi)=0$,
giving our contradiction.
\end{proof}

We can now follow the proof of \cite[Theorem~6.9]{fls} to show the following;
we refer the reader to \cite{fls} for the definition of \emph{operator projective}.

\begin{theorem}
Let $G$ be a non-discrete group and $1<p<\infty$.  Then $L^p(\hat G)$ is not operator
projective as a left $A(G)$-module.
\end{theorem}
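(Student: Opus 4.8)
The plan is to combine the homological characterization of projectivity with the preceding proposition (that there are no non-zero bounded left $A(G)$-module homomorphisms $L^p(\hat G)\to A(G)$). Write $E=L^p(\hat G)$. Recall from \cite{fls} that, letting $A(G)^+$ denote the unitization, $E$ is operator projective exactly when the canonical module epimorphism
\[ q:A(G)^+\proten E\rightarrow E, \qquad q(a\otimes\xi)=a\cdot\xi, \]
admits a completely bounded right inverse which is itself a left $A(G)$-module morphism (here $A(G)^+\proten E$ carries the action coming from left multiplication in the first leg). Note $q$ is always an admissible epimorphism, as $\xi\mapsto 1\otimes\xi$ is a completely contractive linear splitting. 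So, seeking a contradiction, suppose $E$ is projective and fix a completely bounded module morphism $u:E\rightarrow A(G)^+\proten E$ with $qu=\id_E$.

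First I would check that $E$ is an \emph{essential} module, i.e.\ that $A(G)\cdot E$ is norm dense in $E$. By Proposition~\ref{prop:den_in_lp} the elements $j^*_{(0)}\rho(b)$ with $b\in C_{00}(G)^2$ are dense in $E$. For such $b$, with compact support $K$, the regularity of $A(G)$ (\cite{eymard}) supplies $a\in A(G)$ with $a=1$ on $K$, so that $a\cdot b=b$ point-wise; hence by Theorem~\ref{thm:ag_mod_action} we have $a\cdot j^*_{(0)}\rho(b)=j^*_{(0)}\rho(a\cdot b)=j^*_{(0)}\rho(b)$. Thus the dense set already lies in $A(G)\cdot E$, so $E$ is essential.

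The key intermediate claim is that \emph{every} completely bounded left $A(G)$-module morphism $F:E\rightarrow A(G)^+$ is zero. Writing $\epsilon:A(G)^+\rightarrow\mathbb C$ for the augmentation and decomposing $F(\xi)=F_0(\xi)+(\epsilon\circ F)(\xi)\,1$ with $F_0:E\rightarrow A(G)$, the module identity gives $F(b\cdot\xi)=b\,F_0(\xi)+(\epsilon\circ F)(\xi)\,b\in A(G)$ for $b\in A(G)$; applying $\epsilon$ shows $(\epsilon\circ F)(b\cdot\xi)=0$, so $\epsilon\circ F$ vanishes on $A(G)\cdot E$. By essentiality $\epsilon\circ F=0$, whence $F=F_0$ is a bounded module morphism into $A(G)$, and so $F=0$ by the preceding proposition. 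Now I would feed $u$ into this: for each $\mu\in E^*$ the composite $T_\mu=(\id_{A(G)^+}\otimes\mu)\circ u:E\rightarrow A(G)^+$ is a completely bounded left module morphism (the functor $\id\otimes\mu$ being a module map into $A(G)^+$), hence $T_\mu=0$. Since the elementary functionals $\nu\otimes\mu$, for $\nu\in(A(G)^+)^*$ and $\mu\in E^*$, are total on $A(G)^+\proten E$, the vanishing of $(\id\otimes\mu)u(\xi)$ for all $\mu$ forces $u(\xi)=0$ for every $\xi$. But then $\id_E=qu=0$, contradicting $E\neq 0$; therefore $L^p(\hat G)$ is not operator projective.

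Once the preceding proposition is granted, this argument is essentially formal bookkeeping, so the main subtlety is to ensure it runs entirely in the operator (completely bounded) category: that $q$ is a genuine admissible operator epimorphism, that $\id_{A(G)^+}\otimes\mu$ is completely bounded on the operator space projective tensor product, and that the separation of points by the elementary functionals $\nu\otimes\mu$ is valid for $A(G)^+\proten E$. The one genuinely substantive input beyond the preceding proposition is the essentiality of $E$, which is where Theorem~\ref{thm:ag_mod_action} and the density result Proposition~\ref{prop:den_in_lp} (together with the regularity of $A(G)$) do the real work.
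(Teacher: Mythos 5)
Your argument is correct and is essentially the paper's own proof: the paper simply defers to the proof of \cite[Theorem~6.9]{fls}, which runs exactly as you describe --- take a completely bounded module right inverse $u$ of the multiplication map $q:A(G)^+\proten L^p(\hat G)\rightarrow L^p(\hat G)$, use essentiality (which the paper knows from Proposition~\ref{prop:den_in_lp} and \cite[Lemme~3.2]{eymard}) to dispose of the scalar part, slice with functionals $\mu\in L^p(\hat G)^*$ to obtain completely bounded module maps into $A(G)$, and then kill them with the preceding proposition, whose extension to all $1<p<\infty$ is precisely the paper's improvement over \cite{fls}. The one point you rightly flag --- that the elementary functionals $\nu\otimes\mu$ separate the points of $A(G)^+\proten L^p(\hat G)$ --- is inherited unchanged from the argument of \cite{fls}, so your write-up faithfully reproduces the paper's proof rather than diverging from it.
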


\section{Representing the multiplier algebra}\label{sec:rep_ag}

Let $G$ be a locally compact group, let $(p_n)$ be a sequence in $(1,\infty)$ tending to $1$,
and let
\[ E = \ell^2-\bigoplus_n L^{p_n}(\hat G). \]
In the Banach space case, this is the direct sum in the $\ell^2$ sense, defined in
Section~\ref{sec:first}.  In the operator space case, we regard this as a discrete
vector-valued commutative $\ell^2$ space, which carries a natural operator space structure,
see \cite[Section~1]{xu} and \cite{pisier_non}.  Indeed, $E_\infty=\ell^\infty-\oplus
L^{p_n}(\hat G)$ carries an obvious operator space structure.  We give $E_1=\ell^1-\oplus
L^{p_n}(\hat G)$ the operator-space structure arising as a subspace of the dual of
$\ell^\infty-\oplus L^{p_n}(\hat G)^*$.  Then $(E_\infty,E_1)$ is a compatible couple,
and $E$ is simply $(E_\infty,E_1)_{[1/2]}$.  Notice that the underlying Banach space
is the same as the usual definition.

Then $A(G)$ acts co-ordinate wise on $E$, so that $E$ becomes a (completely)
contractive $A(G)$-module.  In the operator space case, notice that this is clear
for $E_1$ and $E_\infty$, and hence also for $E$ by bilinear interpolation.  In this
section, we shall show that $MA(G)$, respectively $M_{cb}A(G)$, have actions on $E$
extending those of $A(G)$, and that the resulting homomorphisms $MA(G)\rightarrow
\mc B(E)$ and $M_{cb}A(G)\rightarrow\mc{CB}(E)$ are weak$^*$-weak$^*$-continuous
(complete) isometries.

\begin{proposition}\label{prop:action_ma}
For $1<p<\infty$, there is a natural action of $MA(G)$ on $L^p(\hat G)$ extending the
action of $A(G)$, such that $a\cdot j^*_{(0)}\rho(f) = j^*_{(0)}\rho(a\cdot f)$ for
$a\in MA(G)$ and $f\in C_{00}(G)^2$.  Furthermore, this action of $MA(G)$ restricts
to give a completely contractive action of $M_{cb}A(G)$ on $L^p(\hat G)$.
\end{proposition}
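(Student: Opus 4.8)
The plan is to follow the template of Theorem~\ref{thm:ag_mod_action}, replacing an element $a\in A(G)$ by a multiplier and building the action on $L^p(\hat G)=(VN_r(G),A(G))_{[1/p]}$ by interpolating a pair of compatible endpoint maps. For the $A(G)$ endpoint I take $T:A(G)\rightarrow A(G);\ b\mapsto a\cdot b$, which is bounded with $\|T\|=\|a\|_{MA(G)}$ by definition of the multiplier norm. For the $VN_r(G)$ endpoint I transport $a$ to the multiplier $\tilde a=\hat\phi_*^{-1}(a)$ of $A_r(G)$, using that $\hat\phi_*:A_r(G)\rightarrow A(G)$ is a completely isometric isomorphism, and set $S:=m_{\tilde a}^*:VN_r(G)\rightarrow VN_r(G)$ to be the adjoint of multiplication by $\tilde a$ on $A_r(G)$. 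Then $S$ is a normal bounded map with $\|S\|=\|a\|_{MA(G)}$, and when $a\in A(G)$ it reduces to the map $x\mapsto\hat\phi_*^{-1}(a)\cdot x$ of Lemma~\ref{lem:mod_struc}.

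The central step is to show that $S$ and $T$ both carry $L=VN_r(G)\cap A(G)$ into itself and agree there, so that Lemma~\ref{lem:when_compat} applies. Crucially, since $A(G)$ need not possess a bounded approximate identity, I cannot approximate $a$ by a bounded net drawn from $A(G)$; instead I would argue exactly as in Theorem~\ref{thm:ag_mod_action}. First, on the concrete generators $\rho(f)$ with $f\in C_{00}(G)^2$, an explicit computation---unwinding $\hat\phi_*$, the check map, and the factor $\nabla^{1/2}$ supplied by Lemma~\ref{lem:int_rrr}---should show $S(\rho(f))=\rho(a\cdot f)\in L$ and $\phi_*(\varphi_{\rho(a\cdot f)})=a\cdot\phi_*(\varphi_{\rho(f)})=T(\phi_*(\varphi_{\rho(f)}))$. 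For a general $x\in L$ I would choose an approximating net $(f_i)\subseteq C_{00}(G)^2$ as in Proposition~\ref{prop:den_two}, so that $\rho(f_i)\rightarrow x$ $\sigma$-weakly and $\varphi_{\rho(f_i)}\rightarrow\varphi_x$ in norm, and pass to the limit in the pairings $\ip{S(x)}{\omega'_{\overline c,\overline d}}$; Proposition~\ref{prop:den_one} then certifies that $S(x)\in L$ with $\phi_*(\varphi_{S(x)})=a\cdot\phi_*(\varphi_x)=T(\phi_*(\varphi_x))$, which is the required agreement.

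With compatibility established, Lemma~\ref{lem:when_compat} yields a single map on $VN_r(G)+A(G)$ restricting to $S$ and $T$, and Theorem~\ref{thm:inter} restricts it to $R_a\in\mc B(L^p(\hat G))$ with $\|R_a\|\leq\|S\|^{1/p'}\|T\|^{1/p}=\|a\|_{MA(G)}$. That $a\mapsto R_a$ is a homomorphism extending the $A(G)$-action, with $a\cdot j^*_{(0)}\rho(f)=j^*_{(0)}\rho(a\cdot f)$, is then immediate from the endpoint formulas together with the density of $j^*_{(0)}\rho(C_{00}(G)^2)$ from Proposition~\ref{prop:den_in_lp}. For the completely bounded assertion I would note that for $a\in M_{cb}A(G)$ the map $T$ is completely bounded with $\|T\|_{cb}=\|a\|_{M_{cb}A(G)}$ by definition, while $S$, as the operator-space adjoint of multiplication by $\tilde a\in M_{cb}A_r(G)$, has the same cb norm; the completely bounded version of Theorem~\ref{thm:inter} then gives $\|R_a\|_{cb}\leq\|a\|_{M_{cb}A(G)}$, so the action restricts to a completely contractive action of $M_{cb}A(G)$.

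I expect the compatibility check of the second paragraph to be the main obstacle. Because of the possible absence of a bounded approximate identity in $A(G)$, the two endpoint actions must be matched on $L$ through the approximation machinery of Propositions~\ref{prop:den_one} and~\ref{prop:den_two} rather than by a limiting argument inside $A(G)$, and one must take care that the dual map $S$ genuinely implements pointwise multiplication by $a$ on the function side once the check map and modular factor are unwound.
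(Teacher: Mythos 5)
Your proposal is correct and takes essentially the same route as the paper: transport the multiplier through the isomorphism $\hat\phi_*$ to $MA_r(G)$ (the paper formalises your $\tilde a=\hat\phi_*^{-1}(a)$ as the extension $\psi(a)(b)=\hat\phi_*^{-1}(a\hat\phi_*(b))$), let it act on $VN_r(G)$ by duality, verify compatibility on $L$ first for $x=\rho(f^*g)$ and then in general via the approximation machinery of Propositions~\ref{prop:den_one} and~\ref{prop:den_two}, and interpolate. The completely bounded part is also handled as you suggest, the paper reducing it to the observation that $\psi$ restricts to a complete contraction $M_{cb}A(G)\rightarrow M_{cb}A_r(G)$ because $\hat\phi_*$ is a complete isometry.
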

\begin{proof}
We let $MA(G)$ act on $A(G)$ in the canonical way.  As in the proof of Lemma~\ref{lem:mod_struc},
we note that $MA_r(G)$ acts on $A_r(G)$ and hence on $VN_r(G)$ by duality.  This action
satisfies $a\cdot\rho(f) = \rho(a\cdot f)$ for $a\in MA(G)$ and $f\in C_{00}(G)$.  We then
extend $\hat\phi_*^{-1}$ to an isometric homomorphism $\psi:MA(G)\rightarrow MA_r(G)$, which
completes the argument as in Lemma~\ref{lem:mod_struc}.  We define $\psi$ by
\[ \psi(a)(b) = \hat\phi_*^{-1}\big( a \hat\phi_*(b) \big)
\qquad (a\in MA(G), b\in A_r(G)). \]
As $\hat\phi_*$ is a homomorphism, this does extend $\hat\phi_*^{-1}$ and is itself
a homomorphism.  Clearly $\psi$ is contractive, and has an obvious contractive inverse,
so that $\psi$ is isometric as required.  Notice that, if we view $a\in MA(G)$ and $\psi(a)$ as
functions on $G$ (using $\Phi$ and $\Phi'$) then these functions agree.

We now follow Theorem~\ref{thm:ag_mod_action} and use interpolation to extend this
$MA(G)$ action to $L^p(\hat G)$.  We hence need to show that if $x\in L$, then
$y=\psi(a)\cdot x\in L$ with $a\cdot \phi_*(\varphi_x) = \phi_*(\varphi_y)$.  As in
the proof of Theorem~\ref{thm:ag_mod_action}, by our approximation result, it is
enough to show this for $x=\rho(f^*g)$ for $f,g\in C_{00}(G)$.  But then the proof of
Theorem~\ref{thm:ag_mod_action} follows \emph{mutatis mutandis}.

The remark about $M_{cb}A(G)$ will follows if we can show that $\psi$ restricts to
a complete contraction $\psi:M_{cb}A(G)\rightarrow M_{cb}A_r(G)$.  However, this
follows immediately because $\hat\psi_*$ is a complete isometry.
\end{proof}

By \cite{DcH}, $MA(G)$ is a dual Banach algebra with a predual $Q$, which is the
completion of $L^1(G)$ for the norm
\[ \|f\|_Q = \sup\Big\{ \Big|\int_G f(s) a(s) \ ds\Big| : a\in MA(G), \|a\|\leq1 \Big\}
\qquad (f\in L^1(G)). \]
Let $\lambda_Q:L^1(G)\rightarrow Q$ be the inclusion map.
Similarly, $M_{cb}A(G)$ has a predual $Q_{cb}$ which is defined in the same way, but
taking the supremum over the unit ball of $M_{cb}A(G)$.  Define similarly
$\lambda_{Q_{cb}}:L^1(G)\rightarrow Q_{cb}$.

For $1<p<\infty$, let $\pi^p:A(G)\rightarrow\mc B(L^p(\hat G))$ be the contractive
homomorphism given by Theorem~\ref{thm:ag_mod_action}, and let $\hat\pi^p:MA(G)
\rightarrow\mc B(L^p(\hat G))$ be the contractive homomorphism given by
Proposition~\ref{prop:action_ma}.  Using Izumi's bilinear product, we have that
$L^p(\hat G)^* = L^{p'}(\hat G)$, and so we can consider the map
\[ \pi^p_*:L^p(\hat G) \proten L^{p'}(\hat G) \rightarrow A(G)^*=VN(G); \quad
\ip{\pi^p_*(\xi\otimes\eta)}{a} = \ip{a\cdot\xi}{\eta}_{p,(0)}, \]
for $a\in MA(G), \xi\in L^p(\hat G)$ and $\eta\in L^{p'}(\hat G))$.
Let $\hat\pi^p_*:MA(G)\rightarrow MA(G)^*$ be the analogous map.

Let $\pi^{p,cb}:A(G)\rightarrow\mc{CB}(L^p(\hat G))$ and
$\hat\pi^{p,cb}:M_{cb}A(G)\rightarrow\mc{CB}(L^p(\hat G))$ be analogously
given by Theorem~\ref{thm:ag_mod_action} and Proposition~\ref{prop:action_ma}.
Similarly, define $\pi^{p,cb}_*:L^p(\hat G)\proten L^{p'}(\hat G)\rightarrow A(G)^*$
and $\hat\pi^{p,cb}_*:L^p(\hat G)\proten L^{p'}(\hat G)\rightarrow MA(G)^*$.

\begin{proposition}\label{prop:pi_p_values}
The maps $\pi^p_*$ and $\pi^{p,cb}_*$ take values in $C^*_\lambda(G)$, the reduced
group C$^*$-algebra.  The map $\hat\pi^p_*$ takes values in the predual $Q$,
and $\hat\pi^{p,cb}_*$ takes values in the predual $Q_{cb}$, so that both $\hat\pi^p$
and $\hat\pi^{p,cb}$ are weak$^*$-weak$^*$-continuous.
\end{proposition}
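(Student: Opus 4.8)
The plan is to compute the pre-adjoint maps explicitly on a dense set of elementary tensors and to recognise the resulting values as honest convolution operators $\lambda(h)$, respectively as the functions $h$ themselves sitting inside the preduals. First I would reduce to the vectors $\xi = j^*_{(0)}\rho(f)$ and $\eta = j^*_{(0)}\rho(g)$ with $f,g\in C_{00}(G)^2$: by Proposition~\ref{prop:den_in_lp} these are norm dense in $L^p(\hat G)$ and $L^{p'}(\hat G)$ respectively, so their elementary tensors span a dense subspace of $L^p(\hat G)\proten L^{p'}(\hat G)$. Since every map in question is bounded and the targets $C^*_\lambda(G)\subseteq VN(G)$ and $Q\subseteq MA(G)^*$ (respectively $Q_{cb}$) are norm closed, it is enough to identify the values on these tensors.

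The heart of the matter is a single calculation. For $a\in MA(G)$, regarded as a continuous function on $G$ via the Eymard embedding, I would use Proposition~\ref{prop:action_ma} (for $a\in A(G)$, Theorem~\ref{thm:ag_mod_action}) to write $a\cdot\xi = j^*_{(0)}\rho(a\cdot f)$, apply the defining property of Izumi's bilinear pairing $\ip{j^*_{(0)}(x)}{j^*_{(0)}(y)}_{p,(0)} = \ip{y}{\varphi_x} = \ip{x}{\varphi_y}$, and then unwind $\varphi_{\rho(g)}$ by Lemma~\ref{lem:int_rrr}. Writing $\rho(a\cdot f)$ as the integration operator attached to the compactly supported function $a\cdot f$ and performing a change of variables involving the modular function, I expect to obtain
\[ \ip{a\cdot\xi}{\eta}_{p,(0)} = \int_G a(s)\, h(s)\,ds, \qquad h = f\cdot\check g\cdot\nabla^{-1/2}\in C_{00}(G). \]
The main obstacle is precisely this computation: keeping the several occurrences of $\nabla$ and of the inversion $s\mapsto s^{-1}$ straight through the change of variables, and checking that the action of Proposition~\ref{prop:action_ma} applies when $a$ is only a multiplier, so that $\rho(a\cdot f)$ really is the integration operator attached to $a\cdot f\in C_{00}(G)$. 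Everything else is bookkeeping.

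Granting the displayed formula, the conclusions are immediate. Taking $a\in A(G)$ shows $\pi^p_*(\xi\otimes\eta) = \lambda(h)\in C^*_\lambda(G)$, as $h\in C_{00}(G)\subseteq L^1(G)$; the density reduction then gives that $\pi^p_*$ takes values in $C^*_\lambda(G)$, and since the computation never touches the matrix norms the same holds for $\pi^{p,cb}_*$. Letting instead $a$ range over all of $MA(G)$, the identical formula says that $\hat\pi^p_*(\xi\otimes\eta)$ is the functional $a\mapsto\int_G a\,h$, which is by the very definition of $Q$ the element $\lambda_Q(h)\in Q$; the analogous statement with $M_{cb}A(G)$ and $Q_{cb}$ yields $\hat\pi^{p,cb}_*(\xi\otimes\eta)=\lambda_{Q_{cb}}(h)\in Q_{cb}$. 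Finally, $\hat\pi^p$ and $\hat\pi^{p,cb}$ are the adjoints of $\hat\pi^p_*$ and $\hat\pi^{p,cb}_*$; as these now take values in the respective preduals $Q$ and $Q_{cb}$, the standard duality criterion shows that $\hat\pi^p$ and $\hat\pi^{p,cb}$ are weak$^*$-weak$^*$-continuous.
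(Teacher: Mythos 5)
Your proposal is correct and follows essentially the same route as the paper: reduce to elementary tensors $j^*_{(0)}\rho(f)\otimes j^*_{(0)}\rho(g)$ with $f,g\in C_{00}(G)^2$ via Proposition~\ref{prop:den_in_lp}, compute the pairing using Proposition~\ref{prop:action_ma} and Lemma~\ref{lem:int_rrr} to get $\int_G a(s)f(s)\nabla(s)^{-1/2}g(s^{-1})\,ds$, i.e.\ your $h=f\cdot\check g\cdot\nabla^{-1/2}$ is exactly the paper's $f\cdot Kg$, and then read off $\lambda_Q(h)$, $\lambda_{Q_{cb}}(h)$ and $\lambda(h)\in C^*_\lambda(G)$ as the paper does. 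The step you flag as the main obstacle is precisely the content of Proposition~\ref{prop:action_ma}, which the paper invokes just as you propose, so there is no gap.
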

\begin{proof}
Suppose that $\xi=j^*_{(0)}(f)$ and $\eta=j^*_{(0)}(g)$ for $f,g\in C_{00}(G)^2$.
Then, for $a\in MA(G)$, using the calculations of Lemma~\ref{lem:int_rrr},
\begin{align*} \ip{\hat\pi^p_*(\xi\otimes\eta)}{a}
&= \ip{j^*_{(0)}\rho(a\cdot f)}{j^*_{(0)}\rho(g)}_{p,(0)}
= \ip{\rho(a\cdot f)}{\varphi_{\rho(g)}} \\
&= \int_G a(s) f(s) \nabla(s)^{-1/2} g(s^{-1}) \ ds
= \ip{a}{\lambda_Q(f\cdot Kg)}. \end{align*}
Hence $\hat\pi^p_*(\xi\otimes\eta) = \lambda_Q(f\cdot Kg) \in Q$.
By Proposition~\ref{prop:den_in_lp}, such $\xi$ and $\eta$ are norm dense, showing
that $\hat\pi^p_*$ takes values in $Q$.  It is now standard that $\hat\pi^p$ is
weak$^*$-weak$^*$-continuous.  The same calculation shows that
$\hat\pi^{p,cb}_*(\xi\otimes\eta) = \lambda_{Q_{cb}}(f\cdot Kg) \in Q_{cb}$,
so that $\hat\pi^{p,cb}_*$ takes values in $Q_{cb}$ and hence also
$\hat\pi^{p,cb}$ is weak$^*$-weak$^*$-continuous.

We have that $\hat\pi^p$, restricted to $A(G)$, is $\pi^p$.  Similarly, and for
$f\in L^1(G)$, we see that $\lambda_Q(f)$, restricted to $A(G)$, is simply
$\lambda(f)\in C^*_\lambda(G) \subseteq VN(G)$.  The above calculation hence
also shows that $\pi^p_*$ takes values in $C^*_\lambda(G)$, as claimed.
The same argument applies in the completely bounded case.
\end{proof}

If $\mc A$ is a commutative Banach algebra and $(L,R)\in M(\mc A)$ then for $a,b\in\mc A$,
$L(a)b = L(ab) = L(ba) = L(b)a = aL(b) = R(a)b$.  If $\mc A$ is faithful, then $L=R$.
We remark that $A(G)$ is faithful, as by \cite[Lemme~3.2]{eymard}, for any compact $K\subseteq
G$ there exists $a\in A(G)$ which is identically $1$ on $K$.

The following is now the $A(G)$ version of the results in Section~\ref{sec:first}.

\begin{theorem}
Let $G$ and $E$ be as above.  Let $MA(G)$ act on $E$ co-ordinate wise.  Then the
resulting homomorphism $\pi:MA(G)\rightarrow\mc B(E)$ is an isometry, and is
weak$^*$-weak$^*$-continuous.  Furthermore,
the image of $\pi$ is the idealiser of $\pi(A(G))$ in $\mc B(E)$.
\end{theorem}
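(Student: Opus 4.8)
The plan is to follow the template set by Theorems~\ref{thm:for_measure} and~\ref{thm:ideal_meas}, now feeding in the $A(G)$-theoretic tools from Section~\ref{sec:third}. Contractivity of $\pi$ is immediate: each $\hat\pi^{p_n}$ is contractive by Proposition~\ref{prop:action_ma}, and for $\xi=(\xi_n)\in E$ we have $\|\pi(a)\xi\|_E^2=\sum_n\|\hat\pi^{p_n}(a)\xi_n\|_{p_n}^2\leq(\sup_n\|\hat\pi^{p_n}(a)\|)^2\|\xi\|_E^2\leq\|a\|^2\|\xi\|_E^2$. Since $E$ is reflexive, $\mc B(E)=(E\proten E^*)^*$, and it remains to establish the three substantive claims: weak$^*$-continuity, isometry, and the identification of the image with the idealiser.

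For weak$^*$-weak$^*$-continuity I would exhibit a pre-adjoint into the predual $Q$ of \cite{DcH}. Writing $E^*=\ell^2\text{-}\bigoplus_n L^{p_n'}(\hat G)$ via Izumi's bilinear pairing, the map $\pi_*:E\proten E^*\to MA(G)^*$ with $\ip{\pi_*(\xi\otimes\eta)}{a}=\ip{\pi(a)\xi}{\eta}_E$ satisfies $\pi_*(\xi\otimes\eta)=\sum_n\hat\pi^{p_n}_*(\xi_n\otimes\eta_n)$. By Proposition~\ref{prop:pi_p_values} each summand lies in $Q$, and each $\hat\pi^{p_n}_*$ is contractive into $Q$ (being the pre-adjoint of the weak$^*$-continuous contraction $\hat\pi^{p_n}$), so $\sum_n\|\hat\pi^{p_n}_*(\xi_n\otimes\eta_n)\|_Q\leq\sum_n\|\xi_n\|_{p_n}\|\eta_n\|_{p_n'}\leq\|\xi\|_E\|\eta\|_{E^*}$ by Cauchy--Schwarz. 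Hence the series converges in $Q$, so $\pi_*$ maps $E\proten E^*$ into $Q$; as $\pi=(\pi_*)^*$, the continuity follows.

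The isometry is where the interpolation analysis does the work, and I expect this to be the main obstacle. Since $\pi$ is contractive and restriction to a single coordinate is isometric, it suffices, given $a\in MA(G)$ and $\epsilon>0$, to produce one coordinate and one vector with norm ratio close to $\|a\|$. Using that $\phi_*(L)$ is a dense ideal in $A(G)$ (the remark following Theorem~\ref{thm:ag_mod_action}) together with continuity of $b\mapsto\|ab\|_{A(G)}$, I would choose $b=\phi_*(\varphi_x)\in\phi_*(L)$ with $\|b\|_{A(G)}=1$ and $\|ab\|_{A(G)}>\|a\|-\epsilon$. Setting $y=\psi(a)\cdot x$, the proof of Proposition~\ref{prop:action_ma} gives $\phi_*(\varphi_y)=ab$ and $\hat\pi^{p_n}(a)j^*_{(0)}(x)=j^*_{(0)}(y)$ in $L^{p_n}(\hat G)$ for every $n$. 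Proposition~\ref{prop:limit} then yields $\|j^*_{(0)}(x)\|_{p_n}\to\|b\|=1$ and $\|j^*_{(0)}(y)\|_{p_n}\to\|ab\|$ as $p_n\to1$, so the vector concentrated in coordinate $n$ equal to $j^*_{(0)}(x)$ has $\|\pi(a)\xi\|_E/\|\xi\|_E\to\|ab\|>\|a\|-\epsilon$. Taking $n$ large gives $\|\pi(a)\|\geq\|a\|-\epsilon$, hence equality.

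Finally, for the idealiser I would copy Theorem~\ref{thm:ideal_meas}. The inclusion $\pi(MA(G))\subseteq\mc I$ follows from $\pi(a)\pi(b)=\pi(ab)$, $\pi(b)\pi(a)=\pi(ba)$ and the fact that $A(G)$ is an ideal in $MA(G)$. Conversely, for $T\in\mc I$ set $L(b)=\pi^{-1}(T\pi(b))$ and $R(b)=\pi^{-1}(\pi(b)T)$, legitimate as $\pi$ is injective on $A(G)$; then $\pi(a)T\pi(b)=\pi(aL(b))=\pi(R(a)b)$ gives $aL(b)=R(a)b$, so by \cite[Theorem~1.2.4]{palmer} the pair $(L,R)$ is a multiplier, and by commutativity and faithfulness of $A(G)$ we obtain $L=R=$ multiplication by some $a_0\in MA(G)$. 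Thus $T\pi(b)=\pi(a_0)\pi(b)$ for all $b$, and since $\{\pi(b)\xi:b\in A(G),\xi\in E\}$ is linearly dense in $E$ — which follows from Proposition~\ref{prop:den_in_lp} and \cite[Lemme~3.2]{eymard}, choosing $b\in A(G)$ equal to $1$ on the support of a given $f\in C_{00}(G)^2$ so that $\pi^{p}(b)j^*_{(0)}\rho(f)=j^*_{(0)}\rho(f)$ — we conclude $T=\pi(a_0)$.
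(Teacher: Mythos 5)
Your proposal is correct and follows essentially the same route as the paper: the isometry via choosing $x\in L$ with $\|\varphi_x\|=1$ nearly attaining the multiplier norm and applying Proposition~\ref{prop:limit} in a single coordinate, weak$^*$-continuity via Proposition~\ref{prop:pi_p_values} showing $\pi_*$ lands in $Q$, and the idealiser by running the argument of Theorem~\ref{thm:ideal_meas} with density from Proposition~\ref{prop:den_in_lp} and \cite[Lemme~3.2]{eymard}. The only (harmless) variations are that you verify convergence of $\sum_n\hat\pi^{p_n}_*(\xi_n\otimes\eta_n)$ in $Q$ directly by Cauchy--Schwarz where the paper uses density of finitely supported tensors, and you spell out the $L=R$ reduction via commutativity and faithfulness that the paper records in the remark preceding the theorem.
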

\begin{proof}
Clearly $\pi$ is contractive.  Let $a\in MA(G)$ and $\epsilon>0$.  As $j_{(0)}(L)$
is dense in $A_r(G)$, we can find $x\in L$ with $\|\varphi_x\|=1$ and
\[ \| a \cdot \phi_*(\varphi_x) \| \geq (1-\epsilon)\|a\|_{MA(G)}. \]
Then, using Proposition~\ref{prop:limit}, we see that
\[ \|\pi(a)\| \geq \lim_n \|a \cdot j^*_{(0)}(x) \|_{p_n} \|j^*_{(0)}(x) \|_{p_n}^{-1}
= \| a \cdot \phi_*(\varphi_x) \| \|\varphi_x\| \geq (1-\epsilon) \|a\|_{MA(G)}. \]
As $\epsilon>0$, we conclude that $\pi$ is an isometry, as required.

Let $\xi=(\xi_n)\in E$ and $\eta=(\eta_n)\in E^*$ be sequences which are eventually zero.
For $a\in MA(G)$, we see that
\[ \ip{\pi(a)\xi}{\eta} = \sum_n \ip{a}{\hat\pi^p_*(\xi_n\otimes\eta_n)}, \]
so that $\pi_*(\xi\otimes\eta) \in Q$.  As such $\xi$ and $\eta$ are dense, by continuity
we see that $\pi_*:E\proten E^*\rightarrow MA(G)^*$ takes values in $Q$.  Again, this
implies that $\pi$ is weak$^*$-weak$^*$-continuous.

Clearly $\pi(MA(G))$ is contained in the idealiser of $\pi(A(G))$.  Conversely,
given $T$ in the idealiser of $\pi(A(G))$, we can follow the proof of
Theorem~\ref{thm:ideal_meas} to find $a\in MA(G)$ with
$\pi(ab) = T \pi(b)$ and $\pi(ba) = \pi(b) T$ for $b\in A(G)$.  For each $L^p(\hat G)$,
by Proposition~\ref{prop:den_in_lp} and again using \cite[Lemme~3.2]{eymard},
it follows that $\{ \pi(a)\xi : a\in A(G), \xi\in L^p(\hat G) \}$
is linearly dense in $L^p(\hat G)$.  This is enough to show that then $T=\pi(a)$
as required to complete the proof.
\end{proof}

The completely bounded version of this result requires a subtly different proof.

\begin{theorem}\label{thm:mcb_rep}
Let $G$ and $E$ be as above, where we now regard $E$ as an operator space.
Let $M_{cb}A(G)$ act on $E$ co-ordinate wise.  Then the
resulting homomorphism $\pi_{cb}:M_{cb}A(G)\rightarrow\mc{CB}(E)$ is a
weak$^*$-weak$^*$-continuous complete isometry.  Furthermore, the image of
$\pi_{cb}$ is the idealiser of $\pi_{cb}(A(G))$ in $\mc{CB}(E)$.
\end{theorem}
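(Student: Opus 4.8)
The plan is to follow the three-part structure of the preceding (Banach-space) theorem---complete contractivity, complete isometry, and identification of the image with the idealiser---upgrading each ingredient to its operator-space form, the only genuinely new difficulty being that the Closed Graph Theorem (and hence \cite[Theorem~1.2.4]{palmer}) is unavailable in the last part. Complete contractivity of $\pi_{cb}$ is immediate from Proposition~\ref{prop:action_ma}, as the coordinate-wise action simply assembles the completely contractive actions of $M_{cb}A(G)$ on the summands $L^{p_n}(\hat G)$. For weak$^*$-weak$^*$-continuity I would argue exactly as in the Banach case: on the dense set of eventually-zero sequences $\xi=(\xi_n)\in E$, $\eta=(\eta_n)\in E^*$ one computes $(\pi_{cb})_*(\xi\otimes\eta) = \sum_n \hat\pi^{p_n,cb}_*(\xi_n\otimes\eta_n)$, and each summand lies in the predual $Q_{cb}$ by Proposition~\ref{prop:pi_p_values}; hence $(\pi_{cb})_*:E\proten E^*\rightarrow M_{cb}A(G)^*$ takes values in $Q_{cb}$, from which $\pi_{cb}$ is weak$^*$-weak$^*$-continuous in the standard way.

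For the complete isometry I would run the isometry estimate of the Banach theorem at the matrix level, with Proposition~\ref{prop:limitcb} replacing Proposition~\ref{prop:limit}. Fixing $a\in\mathbb M_m(M_{cb}A(G))$, I would bound the matrix norm of $\pi_{cb}(a)$ from below by testing the amplified action against matrices $x\in\mathbb M_n(L)$ whose functionals $\varphi_x$ approximately realise the matricial $M_{cb}A(G)$-norm of $a$. Since the matrix form of Proposition~\ref{prop:action_ma} keeps $\psi(a)\cdot x$ among matrices over $L$, with associated functional corresponding to $a\cdot\phi_*(\varphi_x)$, Proposition~\ref{prop:limitcb} lets me pass to the limit $p_n\rightarrow 1$ and recover $\|a\cdot\phi_*(\varphi_x)\|$, and hence $\|a\|_{\mathbb M_m(M_{cb}A(G))}$ up to $\epsilon$. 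Combined with complete contractivity, this shows $\pi_{cb}$ is a complete isometry.

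The heart of the argument, and the step I expect to be most delicate, is showing that the image of $\pi_{cb}$ is exactly the idealiser of $\pi_{cb}(A(G))$ in $\mc{CB}(E)$. One containment is routine. For the converse, given $T$ in the idealiser, I would mimic Theorem~\ref{thm:ideal_meas} and set $L(b)=\pi_{cb}^{-1}(T\pi_{cb}(b))$ and $R(b)=\pi_{cb}^{-1}(\pi_{cb}(b)T)$ for $b\in A(G)$; these make sense because $\pi_{cb}$ is a complete isometry (so injective onto its range) and $T$ idealises $\pi_{cb}(A(G))$, and they are automatically completely bounded, being compositions of $\pi_{cb}$, left/right multiplication by $T$ in the operator algebra $\mc{CB}(E)$, and $\pi_{cb}^{-1}$. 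The key point---circumventing the missing analogue of \cite[Theorem~1.2.4]{palmer}---is that the multiplier relations can now be read off directly from the fact that $\pi_{cb}$ is an injective homomorphism: from $\pi_{cb}(L(a))\pi_{cb}(b) = T\pi_{cb}(ab) = \pi_{cb}(L(ab))$ one gets $L(ab)=L(a)\cdot b$, similarly $R(ab)=a\cdot R(b)$, and from $\pi_{cb}(a)\pi_{cb}(L(b)) = \pi_{cb}(a)T\pi_{cb}(b) = \pi_{cb}(R(a))\pi_{cb}(b)$ one gets $a\cdot L(b)=R(a)\cdot b$. Thus $(L,R)$ is a multiplier of $A(G)$ without any appeal to the Closed Graph Theorem.

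It now remains to extract a completely bounded multiplier. Since $A(G)$ is commutative and faithful (\cite[Lemme~3.2]{eymard}), the remark preceding the theorem gives $L=R$, so that $a\cdot L(b)=L(a)\cdot b$ pointwise on $G$. Using regularity of $A(G)$ (again \cite[Lemme~3.2]{eymard}), the quantity $L(a)(s)/a(s)$ is seen to be independent of $a$ wherever $a(s)\neq 0$, yielding a continuous function $\phi$ on $G$ with $L(a)=\phi\cdot a$ for all $a\in A(G)$; hence $\phi\in MA(G)$, and as $L$, being pointwise multiplication by $\phi$, is completely bounded, we in fact have $\phi\in M_{cb}A(G)$ with $\|\phi\|_{M_{cb}A(G)}\leq\|T\|_{cb}$. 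Finally $\pi_{cb}(\phi)\pi_{cb}(b)=\pi_{cb}(\phi\cdot b)=\pi_{cb}(L(b))=T\pi_{cb}(b)$ for all $b\in A(G)$, and since $\{\pi_{cb}(b)\xi:b\in A(G),\ \xi\in L^{p_n}(\hat G)\}$ is linearly dense in each summand by Proposition~\ref{prop:den_in_lp} and \cite[Lemme~3.2]{eymard}, I conclude $T=\pi_{cb}(\phi)$, which completes the proof.
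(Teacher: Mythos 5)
Most of your proposal tracks the paper's argument: the contractivity, the weak$^*$-continuity via $Q_{cb}$, and the matricial isometry estimate using Proposition~\ref{prop:limitcb} are exactly the paper's steps, and deriving the multiplier relations for $(L,R)$ directly from multiplicativity and injectivity of $\pi_{cb}$ is a legitimate way to sidestep \cite[Theorem~1.2.4]{palmer}. The genuine gap is your claim that $L$ is ``automatically completely bounded, being a composition of $\pi_{cb}$, left multiplication by $T$, and $\pi_{cb}^{-1}$''. The map $\pi_{cb}^{-1}$ is completely isometric only for the operator space structure its domain inherits from $M_{cb}A(G)$: for $b\in A(G)$ one has $\|\pi_{cb}(b)\|_{cb}=\|b\|_{M_{cb}A(G)}$, not $\|b\|_{A(G)}$. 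So your composition only shows that $b\mapsto L(b)$ is completely bounded from $A(G)$ into $A(G)$ \emph{equipped with the matricial multiplier norms}, which is circular: membership of $M_{cb}A(G)$ demands $L\in\mc{CB}(A(G))$ for the genuine $A(G)$ structure (the predual structure from $VN(G)$). These structures really differ: the inclusion $A(G)\rightarrow M_{cb}A(G)$ is completely contractive, and by a theorem of Losert the two norms are not even equivalent on $A(G)$ when $G$ is non-amenable, while the theorem is asserted for \emph{all} locally compact $G$. At the Banach level the same defect is invisible because the Closed Graph Theorem upgrades graph-closedness of $L$ to boundedness in the $A(G)$ norm; the unavailability of precisely this upgrade in the completely bounded setting is what the paper flags after Theorem~\ref{thm:ideal_meas}, and your composition shortcut does not supply it.

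The paper closes this gap with a different mechanism. From $Ti_n(a\cdot\xi)=i_n(L(a)\cdot\xi)$ and density of $A(G)\cdot L^{p_n}(\hat G)$ it produces component maps $T_n\in\mc{CB}(L^{p_n}(\hat G))$ with $Ti_n=i_nT_n$ and $\|T_n\|_{cb}\leq\|T\|_{cb}$; then, setting $A_0=A(G)\cap VN_r(G)$ (an ideal invariant under multipliers) with the canonical maps $\iota_n:A_0\rightarrow L^{p_n}(\hat G)$, faithfulness of $A(G)$ yields the intertwining $T_n\iota_n=\iota_nL$. Finally, Proposition~\ref{prop:limitcb} gives, for a matrix $a=(a_{ij})$ over $A_0$, that $\lim_n\|(\iota_n(a_{ij}))\|=\|(a_{ij})\|$ computed in the genuine matricial $A(G)$ norms, whence $\|(L(a_{ij}))\|=\lim_n\|(T_n\iota_n(a_{ij}))\|\leq\|T\|_{cb}\|(a_{ij})\|$ and so $\|L\|_{cb}\leq\|T\|_{cb}$ in $\mc{CB}(A(G))$. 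In other words, the scale $p_n\rightarrow 1$ is exploited a second time, now to recover the $A(G)$ norm (rather than the multiplier norm) as a limit of $L^{p_n}$ norms on which $T$ acts with uniform cb bound; this is the step your argument is missing. The remainder of your proof (extracting the function $\phi$ using commutativity, faithfulness and regularity, and the density argument via Proposition~\ref{prop:den_in_lp} and \cite[Lemme~3.2]{eymard}) is fine once $\|L\|_{cb}\leq\|T\|_{cb}$ has been secured.
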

\begin{proof}
Again, clearly $\pi_{cb}$ is completely contractive.  As the norm on
$\mathbb M_n(L^p(\hat G))$ is given by interpolating $\mathbb M_n(VN_r(G))$ and
$\mathbb M_n(A(G))$, we can simply apply the proof of the previous theorem, but working
with matrices, and using Proposition~\ref{prop:limitcb}, to show that $\pi_{cb}$ is a
complete isometry.  Similarly, it follows that $\pi_{cb}$ is weak$^*$-weak$^*$-continuous.

Clearly $\pi_{cb}(M_{cb}A(G))$ is contained in the idealiser of $\pi_{cb}(A(G))$.
Conversely, given $T$ in the idealiser of $\pi_{cb}(A(G))$, we can follow the proof of
Theorem~\ref{thm:ideal_meas} to find $(L,R)\in M(A(G))$ with $\pi_{cb}(L(a)) =
T\pi_{cb}(a)$ and $\pi_{cb}(R(a)) = \pi_{cb}(a)T$ for $a\in A(G)$.

For $n\in\mathbb N$, let $i_n:L^{p_n}(\hat G)\rightarrow E$ be the inclusion map,
which is a completely contractive $A(G)$-bimodule homomorphism.  Then
\[ Ti_n(a\cdot\xi) = T \pi_{cb}(a) i_n(\xi) = \pi_{cb}(L(a)) i_n(\xi)
= i_n(L(a)\cdot\xi)
\qquad (a\in A(G), \xi\in L^{p_n}(\hat G)). \]
As $A(G)\cdot L^p(\hat G)$ is dense in $L^p(\hat G)$ for all $p$, we conclude that
there exists $T_n\in\mc{CB}(L^{p_n}(\hat G))$ with $Ti_n = i_nT_n$ and
$\|T_n\|_{cb} \leq \|T\|_{cb}$.  It now follows that
\[ T_n(a\cdot\xi) = L(a)\cdot\xi, \quad a \cdot T_n(\xi) = R(a)\cdot\xi
\qquad (a\in A(G), \xi\in L^{p_n}(\hat G)). \]

Let $A_0 = A(G)\cap VN_r(G)$ regarded as a subspace of $A(G)$ (so that $A_0$
is $\phi_* j_{(0)}(L)$).  Consider the map $i^*_{(0)}\phi_*^{-1}:A(G) \rightarrow
L^*_{(0)}$, which maps $A_0$ into $L^p(\hat G)$ for all $p$.  Let
$\iota_n: A_0 \rightarrow L^{p_n}(\hat G)$ be the resulting map.  We have that
$a\cdot\iota_n(b) = \iota_n(ab)$ for $a\in A(G)$ and $b\in A_0$.
We hence see that
\[ a\cdot T_n \iota_n(b) = R(a)\cdot\iota_n(b) = \iota_n(R(a)b) = \iota_n(a L(b))
= a \cdot \iota_n(L(b)) \qquad (a\in A(G), b\in A_0). \]
It follows that $T_n \iota_n = \iota_n L$.  By much the same argument as at the
start of the proof, we see that for $a=(a_{ij})\in \mathbb M_m(A_0)$,
\[ \|(L)_m(a)\| = \lim_n \| (\iota_n L(a_{ij})) \| = \lim_n \|(T_n \iota_n(a_{ij})) \|
\leq \|T\|_{cb} \lim_n \|(\iota_n(a_{ij}))\| \leq \|T\|_{cb} \|(a_{ij})\|. \]
Thus $L$ is completely bounded, with $\|L\|_{cb}\leq\|T\|_{cb}$,
and so induces a member of $M_{cb}A(G)$.
We can now follow the end of the previous proof to conclude that $T \in
\hat\pi_{cb}(M_{cb}A(G))$.
\end{proof}

\section{Analogues of the Figa-Talamanca--Herz algebras}

In Section~\ref{sec:first} we saw that the Figa-Talamanca--Herz algebras $A_p(G)$
naturally appeared.  We have now developed enough theory to very easily suggest a
definition for analogues of the Figa-Talamanca--Herz algebras, starting with $A(G)$
instead of $L^1(G)$.  Indeed, consider the map $\pi^p:A(G)\rightarrow\mc B(L^p(\hat G))$
as in the previous section.  We define $A_p(\hat G)$ to be the image of $\pi^p_*$,
equipped with the quotient norm, so that $A_p(\hat G)$ is isometric to
$(L^p(\hat G) \proten L^{p'}(\hat G)) /\ker \pi^p_*$.  By Proposition~\ref{prop:pi_p_values}
we see that $A_p(\hat G)$ is a subspace of $C^*_\lambda(G)$, which we would
expect, as this is the ``dual'' statement to the fact that $A_p(G)\subseteq C_0(G)$.

The following says, informally, that $A_2(\hat G) = L^1(G)$.

\begin{theorem}\label{thm:atwo_is_gpconv}
For a locally compact group $G$, $A_2(\hat G)$ is equal to $\lambda(L^1(G))$ as
a subset of $C^*_\lambda(G)$, and the norm on $A_2(\hat G)$ agrees with that on
$L^1(G)$.
\end{theorem}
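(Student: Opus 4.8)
The plan is to let the Hilbert-space identification $\theta:L^2(G)\to L^2(\hat G)$ of Proposition~\ref{prop:std_hilbert} do all the work, reducing the statement to the elementary fact that $L^1(G)$ is the quotient of the projective square of $L^2(G)$ under pointwise multiplication of the factors. First I would transport $\pi^2$ through $\theta$. From Proposition~\ref{prop:std_hilbert} one gets $\theta^{-1}(j^*_{(0)}\rho(h)) = \nabla^{-3/4}\check h$, and combining this with the module formula $a\cdot j^*_{(0)}\rho(h) = j^*_{(0)}\rho(a\cdot h)$ of Theorem~\ref{thm:ag_mod_action}, a one-line computation on the dense set $\{\nabla^{-3/4}\check h : h\in C_{00}(G)^2\}$ shows that $\theta^{-1}\pi^2(a)\theta$ is multiplication by $\check a$. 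Since $a\in A(G)\subseteq C_0(G)$ is bounded, this operator is bounded, so the identity extends by continuity to all of $L^2(G)$. Writing $\beta(a)$ for multiplication by $\check a$ on $L^2(G)$, we thus have $\pi^2 = \operatorname{Ad}(\theta)\circ\beta$ with $\theta$ unitary (it intertwines the inner products).

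Next I would compute the preadjoint on rank-one tensors. Because $L^2(\hat G)$ is a Hilbert space, the projective tensor norm on $L^2(\hat G)\proten L^2(\hat G)^*$ is $\inf\sum_i\|\xi_i\|\,\|\eta_i\|$, and the unitary $\theta$ transfers this isometrically to the analogous infimum over decompositions in $L^2(G)$, carrying $\pi^2_*$ to $\beta_*$. For $\zeta,\omega\in L^2(G)$ I would then compute
\[ \ip{\beta(a)}{\zeta\otimes\bar\omega} = \int_G \check a(s)\,\zeta(s)\,\overline{\omega(s)}\ ds
= \int_G a(s)\,\check\zeta(s)\,\overline{\check\omega(s)}\,\nabla(s)^{-1}\ ds, \]
the second equality by the substitution $s\mapsto s^{-1}$. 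Recognising $\check\zeta\,\nabla^{-1/2}=K\zeta$ and $\overline{\check\omega}\,\nabla^{-1/2}=\overline{K\omega}$, this reads $\beta_*(\zeta\otimes\bar\omega)=\lambda(K\zeta\cdot\overline{K\omega})\in\lambda(L^1(G))$, consistent with Proposition~\ref{prop:pi_p_values}. Here $K$ is the isometry $K\xi=\check\xi\,\nabla^{-1/2}$, so $\|K\zeta\|_2\|K\omega\|_2=\|\zeta\|_2\|\omega\|_2$.

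It then remains to identify the image and match the quotient norm. Any $u=\sum_i\zeta_i\otimes\bar\omega_i$ with $\sum_i\|\zeta_i\|_2\|\omega_i\|_2<\infty$ is sent to $\lambda(b)$ with $b=\sum_i K\zeta_i\cdot\overline{K\omega_i}$; since $\|K\zeta_i\cdot\overline{K\omega_i}\|_1\le\|\zeta_i\|_2\|\omega_i\|_2$ by Cauchy--Schwarz, the series converges absolutely in $L^1(G)$, so the image lies in $\lambda(L^1(G))$ and, taking the infimum over representations, the quotient norm of $\lambda(b)$ is at least $\|b\|_1$. Conversely, the factorisation $b=c\,\bar d$ with $c=\operatorname{sgn}(b)|b|^{1/2}$, $d=|b|^{1/2}$ satisfies $\|c\|_2\|d\|_2=\|b\|_1$; applying $K^{-1}$ produces a rank-one tensor mapping to $\lambda(b)$ of projective norm $\le\|b\|_1$, so $\lambda(L^1(G))$ is contained in the image and the quotient norm is at most $\|b\|_1$. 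Hence $A_2(\hat G)=\lambda(L^1(G))$ with equal norms.

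The only genuinely delicate point is the bookkeeping with the inversion $\check{}$, the linear conjugation $K$, and the modular function $\nabla$ in the preadjoint computation: one must get the powers of $\nabla$ to cancel so that the clean formula $\beta_*(\zeta\otimes\bar\omega)=\lambda(K\zeta\cdot\overline{K\omega})$ emerges (and, as a side check, to confirm that $\theta$ really is unitary as claimed). Once that formula is secured, the remaining step is the standard realisation of $L^1$ as a quotient of the projective square of $L^2$, which is routine.
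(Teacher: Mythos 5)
Your proposal is correct and takes essentially the same route as the paper: both transport everything through the isometry $\theta$ of Proposition~\ref{prop:std_hilbert}, establish on the dense set $C_{00}(G)^2$ the key formula identifying $\pi^2_*$ on elementary tensors with $\lambda$ applied to a pointwise product twisted by $K$ (your $\lambda(K\zeta\cdot\overline{K\omega})$ is the paper's $\lambda(K\xi\cdot\eta)$ up to the anti-linear relabelling of the second factor), extend by continuity, and conclude via Cauchy--Schwarz in one direction and the factorisation $b=\operatorname{sgn}(b)|b|^{1/2}\cdot|b|^{1/2}$ in the other. Your only deviations are cosmetic --- you use the sesquilinear self-duality of $L^2(\hat G)$ where the paper uses Izumi's bilinear pairing, and you package the action as the multiplication operator $\beta(a)$ --- but be aware that checking $\theta^{-1}\pi^2(a)\theta = M_{\check a}$ vector-by-vector on the dense set implicitly evaluates $\theta$ at $\nabla^{-3/4}(a\cdot h)^\vee$ with $a\cdot h\notin C_{00}(G)^2$, so this identity is cleanest to verify weakly, by pairing against $\theta(g)$ for $g\in C_{00}(G)^2$, which is exactly the computation of Proposition~\ref{prop:pi_p_values} that the paper invokes directly.
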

\begin{proof}
We recall the isometric isomorphism $\theta:L^2(G)\rightarrow L^2(\hat G)$ given
by Proposition~\ref{prop:std_hilbert}, $\theta(f) = j^*_{(0)}\rho(\Delta^{-3/4}\check f)$
for $f\in C_{00}(G)^2$.  Then, from above,
\[ \pi^2_*(\theta(f)\otimes\theta(g)) = \lambda\big( \Delta^{-3/4}\check f
   \cdot K(\Delta^{-3/4}\check g) \big)
= \lambda\big( \Delta^{-1/2} \check f \cdot g \big)
= \lambda\big( Kf \cdot g \big) \qquad (f,g\in C_{00}(G)^2). \]
As $K:L^2(G)\rightarrow L^2(G)$ is unitary, by continuity,
we have that $\pi^2_*(\theta(\xi)\otimes\theta(\eta))
= \lambda(K\xi\cdot\eta)$ for $\xi,\eta\in L^2(G)$.  In particular, by Cauchy-Schwarz,
we have that $K\xi\cdot\eta \in L^1(G)$ with $\|K\xi\cdot\eta\|_1 \leq \|K\xi\|_2\|\eta\|_2$,
for $\xi,\eta\in L^2(G)$.

For $\tau\in L^2(G)\proten L^2(G)$ and $\epsilon>0$, we can find sequences $(\xi_n)$
and $(\eta_n)$ in $L^2(G)$ with
\[ \tau = \sum_n \xi_n\otimes\tau_n, \quad \|\tau\| \leq \sum_n \|\xi_n\|_2\|\eta_n\|_2
< \|\tau\|+\epsilon. \]
Then let $f=\sum_n K\xi_n\cdot\eta_n \in L^1(G)$, the sum converging by Cauchy-Schwarz,
with $\|f\|_1 \leq \|\tau\|+\epsilon$.  We see that
\[ \pi^2_*(\theta\otimes\theta)\tau = \lambda\Big( \sum_n K\xi_n \cdot \eta_n \Big)
= \lambda(f). \]
As $(\theta\otimes\theta)$ is an isometric isomorphism, it follows that $A_2(\hat G) \subseteq
\lambda(L^1(G))$.

For $f\in L^1(G)$, let $\xi = K(|f|^{1/2})\in L^2(G)$ and $\eta = f |f|^{-1/2} \in L^2(G)$,
so that $\pi^2_*(\theta(\xi)\otimes\theta(\eta)) = f$, and $\|\xi\|_2 = \|\eta\|_2 = \|f\|_1^{1/2}$.
We conclude that $A_2(\hat G) = \lambda(L^1(G))$, with the quotient norm on $A_2(\hat G)$
agreeing with the $L^1$ norm on $\lambda(L^1(G))$.
\end{proof}

In particular, $A_2(\hat G)$ is a subalgebra of $C^*_\lambda(G)$, and with the quotient norm,
$A_2(\hat G)$ is a Banach algebra.  We have been unable to
decide if the same is true for $A_p(\hat G)$, for $p\not=2$.  However, we do have the following.

\begin{proposition}\label{prop:aphatg_ds}
For $1<p<\infty$, $A_p(\hat G)$ contains a dense subset which is a subalgebra of
$C^*_\lambda(G)$.
\end{proposition}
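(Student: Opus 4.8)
The plan is to exhibit $\lambda(C_{00}(G))$ as the required dense subalgebra, where as usual $\lambda:L^1(G)\rightarrow C^*_\lambda(G)$ is the integrated left-regular representation. First I would recall from the proof of Proposition~\ref{prop:pi_p_values} that, for $f,g\in C_{00}(G)^2$,
\[ \pi^p_*\big( j^*_{(0)}\rho(f) \otimes j^*_{(0)}\rho(g) \big) = \lambda(f\cdot Kg), \]
where $Kg(s)=g(s^{-1})\nabla(s)^{-1/2}$, so that each such image is $\lambda$ of a member of $C_{00}(G)$. By Proposition~\ref{prop:den_in_lp}, the set $j^*_{(0)}\rho(C_{00}(G)^2)$ is norm dense in $L^p(\hat G)$ and in $L^{p'}(\hat G)$, so the corresponding elementary tensors span a dense subspace of the projective tensor product $L^p(\hat G)\proten L^{p'}(\hat G)$. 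As $\pi^p_*$ is a norm-decreasing surjection onto $A_p(\hat G)$ equipped with the quotient norm, the linear span $\mc D := \lin\{ \lambda(f\cdot Kg) : f,g\in C_{00}(G)^2\}$ is dense in $A_p(\hat G)$.

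The crux is then to identify $\mc D$ explicitly, namely to show that $\{ f\cdot Kg : f,g\in C_{00}(G)^2 \} = C_{00}(G)$. The inclusion into $C_{00}(G)$ is clear, since $f\cdot Kg$ is a product of compactly supported continuous functions with the continuous function $\nabla^{-1/2}$. For the reverse inclusion I would argue that a single product already suffices: given $h\in C_{00}(G)$, set $f=\nabla^{1/2}\cdot h\in C_{00}(G)$ and choose $g\in C_{00}(G)$ with $g\equiv 1$ on the compact set $(\operatorname{supp} h)^{-1}$. Then $g(s^{-1})=1$ for $s\in\operatorname{supp} h$, so that $(f\cdot Kg)(s)=\nabla(s)^{1/2}h(s)\,g(s^{-1})\nabla(s)^{-1/2}=h(s)g(s^{-1})=h(s)$ for all $s$, both sides vanishing off $\operatorname{supp} h$. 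Recalling that $C_{00}(G)^2=C_{00}(G)$ (every $h$ factors as $h=h\cdot\psi$ with $\psi\equiv 1$ on $\operatorname{supp} h$), we conclude that $\mc D=\lambda(C_{00}(G))$.

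It then remains to observe that $\lambda(C_{00}(G))$ is a subalgebra of $C^*_\lambda(G)$. Since $\lambda$ is an algebra homomorphism from the convolution algebra $L^1(G)$ into $C^*_\lambda(G)$, namely $\lambda(u)\lambda(v)=\lambda(uv)$ with $uv$ denoting convolution, and since $C_{00}(G)$ is closed under convolution, the image $\lambda(C_{00}(G))$ is closed under both products and linear combinations; hence it is a subalgebra. Combined with the density established above, this proves the proposition.

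I do not anticipate a serious obstacle: the only real content is the elementary observation that the functions $f\cdot Kg$ already exhaust $C_{00}(G)$, which reduces the algebraic claim to the familiar fact that $C_{00}(G)$ is a convolution subalgebra of $L^1(G)$. The points requiring a little care are purely bookkeeping, namely correctly transporting density through the projective tensor product and the quotient norm defining $A_p(\hat G)$, and keeping track of the modular function $\nabla$ in the definition of $K$.
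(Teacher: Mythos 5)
There is a genuine gap, and it sits precisely where the real work of the paper's proof lies. Throughout the paper juxtaposition of functions denotes \emph{convolution}, so $C_{00}(G)^2 = \lin\{fg : f,g\in C_{00}(G)\}$ is the linear span of convolution products, and this is in general a \emph{proper} subset of $C_{00}(G)$: by Lemma~\ref{lem:int_rrr}, every $F\in C_{00}(G)^2$ satisfies $\nabla^{1/2}F\in A(G)$, whereas for infinite abelian $G$ (say) there are compactly supported continuous functions not in $A(G)$. Your step ``$C_{00}(G)^2=C_{00}(G)$, since every $h$ factors as $h=h\cdot\psi$'' uses the \emph{pointwise} product and so proves nothing about $C_{00}(G)^2$. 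This matters because the identity $\pi^p_*\big(j^*_{(0)}\rho(f)\otimes j^*_{(0)}\rho(g)\big)=\lambda(f\cdot Kg)$ from Proposition~\ref{prop:pi_p_values} is only available for $f,g\in C_{00}(G)^2$: one needs $\rho(f)\in L$, equivalently (Lemma~\ref{lem:int_rrr}, Proposition~\ref{prop:int_rrr_better}) $\nabla^{1/2}f\in A(G)$, which fails for general $f\in C_{00}(G)$, and then $j^*_{(0)}\rho(f)$ is not known to define an element of $L^p(\hat G)$. Your witnesses $f=\nabla^{1/2}\cdot h$ and the Urysohn-type $g$ lie only in $C_{00}(G)$, so they produce no elements of $\mc D$. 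Indeed your identification $\mc D=\lambda(C_{00}(G))$ is false, not merely unjustified: since $Kg=(\nabla^{1/2}g)^\vee$ and $A(G)$ is check-invariant, every $f\cdot Kg$ with $f,g\in C_{00}(G)^2$ equals $\nabla^{-1/2}\cdot a$ for some $a\in A(G)\cap C_{00}(G)$, so $\mc D\subseteq\lambda\big(\nabla^{-1/2}\cdot(A(G)\cap C_{00}(G))\big)\subsetneq\lambda(C_{00}(G))$ in general; it is not even clear that $\lambda(C_{00}(G))\subseteq A_p(\hat G)$ for $p\neq 2$, so your candidate set may fail to be a subset of $A_p(\hat G)$ at all.

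The density half of your argument (Proposition~\ref{prop:den_in_lp} plus transporting density through the projective tensor product and the quotient map) does agree with what the paper uses. But the subalgebra property cannot be routed through $\lambda(C_{00}(G))$; one must show directly that $\mc D$ is closed under products. The product $\lambda(f_1\cdot Kg_1)\lambda(f_2\cdot Kg_2)=\lambda(F)$, where $F=(f_1\cdot Kg_1)(f_2\cdot Kg_2)$ is a convolution and hence lies in $C_{00}(G)^2$; the crux --- which your Urysohn choice elides --- is to manufacture $g\in C_{00}(G)^2$, i.e.\ itself a convolution, with $Kg\equiv 1$ on the support of $F$, so that $F\cdot Kg=F$ and $\lambda(F)=\pi^p_*\big(j^*_{(0)}\rho(F)\otimes j^*_{(0)}\rho(g)\big)\in\mc D$. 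This is exactly what the paper's explicit construction achieves: the convolution $g_0=e\check f$ of (normalised) characteristic functions is continuous, compactly supported and identically $1$ on a prescribed compact set, and $g=(\nabla^{-1/2}g_1)(\nabla^{-1/2}g_0)\in C_{00}(G)^2$ corrects by the modular function so that $Kg\equiv 1$ where needed. An ordinary cutoff function cannot be substituted here, because membership in $C_{00}(G)^2$ is a genuine restriction; repairing your proof essentially forces you to reproduce this construction.
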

\begin{proof}
Let $a,b,c,d\in C_{00}(G)^2$, let $\xi_1=j^*_{(0)}(a), \xi_2=j^*_{(0)}(c) \in L^p(\hat G)$
and let $\eta_1=j^*_{(0)}(b), \eta_2=j^*_{(0)}(d) \in L^{p'}(\hat G)$.  Then,
as above, $\pi^p_*(\xi_1\otimes\eta_1) = \lambda(a\cdot Kb)$ and
$\pi^p_*(\xi_2\otimes\eta_2) = \lambda(c\cdot Kd)$.  Let $f=(a\cdot Kb)(c\cdot Kd)
\in C_{00}(G)^2$.

Pick $g_1\in C_{00}(G)$ with $\int_G g_1(s) \ ds =1$.
Let $X \subseteq G$ be a compact set containing the support of $f$, and let
$Y\subseteq G$ be a compact set containing the support of $g_1$.
Let $e=|Y|^{-1}\chi_{(XY)^{-1}Y}$ and $f=\chi_Y$, so that
$g_0 = e\check f \in C_{00}(G)$.  Then, for $s\in G$,
\[ g_0(s) = \int_G e(t) \check f(t^{-1}s) \ dt
= \frac{1}{|Y|} \int_{(XY)^{-1}Y} \chi_Y(s^{-1}t) \ dt
= \frac{|sY \cap (XY)^{-1}Y|}{|Y|}, \]
so if $s\in (XY)^{-1}$, then $sY \subseteq (XY)^{-1}Y$ and so
$g_0(s) = |sY||Y|^{-1} = 1$.  Now let 
$g = (\nabla^{-1/2}g_1)(\nabla^{-1/2}g_0) \in C_{00}(G)^2$, so for $s\in X$,
\begin{align*} (\nabla^{-1/2}g_1) & (\nabla^{-1/2}g_0)(s^{-1})
= \int_G \nabla(t)^{-1/2} g_1(t) \nabla(t^{-1}s^{-1})^{-1/2}  g_0(t^{-1}s^{-1}) \ dt \\
&= \nabla(s)^{1/2} \int_Y g_1(t) g_0(t^{-1}s^{-1}) \ dt
= \nabla(s)^{1/2} \int_Y g_1(t) \ dt = \nabla(s)^{1/2},
\end{align*}
as if $t\in Y$ then $t^{-1}s^{-1} \in (XY)^{-1}$.  Hence, for $s\in X$,
we see that $Kg(s) = g(s^{-1}) \nabla(s)^{-1/2} = 1$.
Thus $f \cdot Kg = f$, showing that
\[ \pi^p_*(\xi_1\otimes\eta_1) \pi^p_*(\xi_2\otimes\eta_2)
= \lambda(f) = \pi^p_*(j^*_{(0)}\rho(f)\otimes j^*_{(0)}\rho(g)). \]

We conclude that
\[ \lin \big\{ \pi^p_*(j^*_{(0)}\rho(f)\otimes j^*_{(0)}\rho(g)) :
f,g\in C_{00}(G)^2 \big\} \subseteq A_p(\hat G) \]
is a dense subalgebra.
\end{proof}

One could instead work with $\pi^{p,cb}_*$, which would lead to an operator
space version of $A_p(\hat G)$, say $OA_p(\hat G)$.  However, as this would
naturally use the operator space projective tensor product, in general we would
only have that $A_p(\hat G) \subseteq OA_p(\hat G)$.  Indeed, in
\cite{runde}, Runde used the natural operator space structure on vector valued
\emph{commutative} $L^p$ spaces to define algebras $OA_p(G)$, as an attempt to find
an operator space structure on $A_p(G)$.  If $G$ is abelian, then by the Fourier
transform, $OA_p(\hat G)$ has an unambiguous meaning (either ours or Runde's).
Let $PM_p(\hat G)$ be the weak$^*$-closure of $\pi^p(A(G))$ in
$\mc B(L^p(\hat G))$.  After \cite[Proposition~2.1]{runde}, in a remark
attributed to G. Pisier, it is shown that there exist abelian $G$ with
$PM_p(\hat G) \not\subseteq \mc{CB}(L^p(\hat G))$.  It follows that $OA_p(\hat G)
\not= A_p(\hat G)$.  If we wish to view $OA_p(\hat G)$ as a generalisation of
$A_p(G)$, then this a problem!

\bigskip
\noindent\textbf{Author's address:}
\parbox[t]{5in}{School of Mathematics,\\
University of Leeds,\\
Leeds LS2 9JT\\
United Kingdom}

\smallskip
\noindent\textbf{Email:} \texttt{matt.daws@cantab.net}


\begin{thebibliography}{9}

\normalsize

\newcommand{\bibbook}[3]{\textsc{#1}, \emph{#2}, (#3).}
\newcommand{\bibpaper}[6]{\textsc{#1}, `#2', \emph{#3} #4 (#5) #6.}
\newcommand{\bibpreprint}[2]{\textsc{#1}, `#2', preprint.}

\bibitem{bergh} \bibbook{J. Bergh, J. L\"ofstr\"om}
   {Interpolation Spaces.  An introduction}
   {Springer-Verlag, Berlin-New York, 1976}

\bibitem{dales} \bibbook{H.\,G. Dales}
   {Banach algebras and automatic continuity}
   {Clarendon Press, Oxford, 2000}

\bibitem{daws} \bibpaper{M. Daws}
   {Dual Banach algebras: representations and injectivity}
   {Studia Math.}{178}{2007}{231--275}

\bibitem{DcH} \bibpaper{J. De Canni\`{e}re, U. Haagerup}
   {Multipliers of the Fourier algebras of some simple Lie groups and their discrete subgroups}
   {Amer. J. Math.}{107}{1985}{455--500}

\bibitem{ER} \bibbook{E.\,G. Effros, Z.-J. Ruan}
   {Operator spaces}{Oxford University Press, New York, 2000}

\bibitem{ES} \bibbook{M. Enock, J.-M. Schwartz}
   {Kac algebras and duality of locally compact groups}
   {Springer-Verlag, Berlin, 1992}

\bibitem{eymard} \bibpaper{P. Eymard}
   {L'alg\`ebre de Fourier d'un groupe localement compact}
   {Bull. Soc. Math. France}{92}{1964}{181--236}

\bibitem{fls} \textsc{B. Forrest, H. Lee, E. Samei},
   `Projectivity of modules over Fourier algebras',
   preprint, see arXiv:0807.4361v2 [math.FA]

\bibitem{gha} \bibpaper{F. Ghahramani}
   {Isometric representation of $M(G)$ on $B(H)$.}
   {Glasgow Math. J.}{23}{1982}{119--122}

\bibitem{haa_noncomm} \textsc{U. Haagerup},
   `{$L\sp{p}$}-spaces associated with an arbitrary von {N}eumann algebra',
   in \emph{Alg\`ebres d'op\'erateurs et leurs applications en physique
   math\'ematique ({P}roc. {C}olloq., {M}arseille, 1977)},
   \emph{Colloq. Internat. CNRS} {274} 175--184.

\bibitem{haa} \bibpaper{U. Haagerup}
   {Operator-valued weights in von {N}eumann algebras. {II}}
   {J. Funct. Anal.}{33}{1979}{339--361}

\bibitem{herz1} \bibpaper{C. Herz}
   {The theory of {$p$}-spaces with an application to convolution operators}
   {Trans. Amer. Math. Soc.}{154}{1971}{69--82}

\bibitem{herz2} \bibpaper{C. Herz}{Harmonic synthesis for subgroups}
   {Ann. Inst. Fourier (Grenoble)}{23}{1973}{91--123}
   
\bibitem{hilsum} \bibpaper{M. Hilsum}
   {Les espaces {$L\sp{p}$} d'une alg\`ebre de von {N}eumann
              d\'efinies par la deriv\'ee spatiale}
   {J. Funct. Anal.}{40}{1981}{151--169}

\bibitem{izumi} \bibpaper{H. Izumi}
   {Constructions of non-commutative $L\sp p$-spaces with a complex
      parameter arising from modular actions}
   {Internat. J. Math.}{8}{1997}{1029--1066}

\bibitem{izumi2} \bibpaper{H. Izumi}
   {Natural bilinear forms, natural sesquilinear forms and the
      associated duality on non-commutative $L\sp p$-spaces}
   {Internat. J. Math.}{9}{1998}{975--1039}

\bibitem{johnson} \bibpaper{B.\,E. Johnson}
   {An introduction to the theory of centralizers}
   {Proc. London Math. Soc.}{14}{1964}{299--320}

\bibitem{jnr} \bibpaper{M. Junge, M. Neufang, Z.-J. Ruan}
   {A representation theorem for locally compact quantum groups}
   {Internat. J. Math.}{20}{2009}{377-400}

\bibitem{jrx} \textsc{M. Junge, Z.-J. Ruan, Q. Xu},
   `Rigid {$\mc{OL}\sb p$} structures of non-commutative {$L\sb p$}-spaces 
   associated with hyperfinite von {N}eumann algebras',
   \emph{Math. Scand.} \textbf{96} (2005) 63--95.

\bibitem{kosaki} \bibpaper{H. Kosaki}
   {Applications of the complex interpolation method to a von
              {N}eumann algebra: noncommutative {$L\sp{p}$}-spaces}
   {J. Funct. Anal.}{56}{1984}{1}{29--78}

\bibitem{KR} \bibpaper{J. Kraus, Z.-J. Ruan}
   {Multipliers of {K}ac algebras}
   {Internat. J. Math.}{8}{1997}{213--248}

\bibitem{kusvn} \bibpaper{J. Kustermans, S. Vaes}
   {Locally compact quantum groups in the von {N}eumann algebraic setting}
   {Math. Scand.}{92}{2003}{68--92}

\bibitem{kusbook} \textsc{J. Kustermans},
   `Locally compact quantum groups',
   in \emph{Quantum independent increment processes. I}, 99--180,
   Lecture Notes in Math. \textbf{1865}  (Springer, Berlin, 2005)

\bibitem{neufang} \bibpaper{M. Neufang, Z.-J. Ruan, N. Spronk}
   {Completely isometric representations of {$M\sb {\rm cb}A(G)$}
              and {${\rm UCB}(\hat G)$}}
   {Trans. Amer. Math. Soc.}{360}{2008}{1133--1161}

\bibitem{palmer} \bibbook{T.\,W. Palmer}%
   {Banach algebras and the general theory of $^*$-algebras, Vol 1}%
   {Cambridge University Press, Cambridge, 1994}

\bibitem{pisier_mem} \textsc{G. Pisier},
   `The operator Hilbert space ${\rm OH}$, complex interpolation and tensor norms',
   \emph{Mem. Amer. Math. Soc.} \textbf{122} (1996), no. 585.

\bibitem{pisier_non} \textsc{G. Pisier},
   `Non-commutative vector valued $L\sb p$-spaces and completely $p$-summing maps',
   \emph{Ast�risque} No. \textbf{247} (1998).

\bibitem{pisier} \bibbook{G. Pisier}
   {Introduction to operator space theory}
   {Cambridge University Press, Cambridge, 2003}

\bibitem{PZ} \textsc{G. Pisier, Q. Xu},
   `Non-commutative $L\sp p$-spaces', in
   \emph{Handbook of the geometry of Banach spaces, Vol. 2}, 1459--1517,
   (North-Holland, Amsterdam, 2003).

\bibitem{runde} \bibpaper{V. Runde}
   {Operator {F}ig\`a-{T}alamanca--{H}erz algebras}
   {Studia Math.}{155}{2003}{153--170}

\bibitem{spronk} \bibpaper{N. Spronk}
   {Measurable {S}chur multipliers and completely bounded
              multipliers of the {F}ourier algebras}
   {Proc. London Math. Soc.}{89}{2004}{161--192}

\bibitem{staf} \bibpaper{J. Stafney}
   {The spectrum of an operator on an interpolation space}
   {Trans. Amer. Math. Soc.}{144}{1969}{333--349}

\bibitem{sz} \bibbook{S. Str\u{a}til\u{a}, L. Zsod\'o}
   {Lectures on von Neumann algebras. Revision of the 1975 original.
   Translated from the Romanian by Silviu Teleman.}
   {Abacus Press, Tunbridge Wells, 1979}

\bibitem{tak2} \bibbook{M. Takesaki}
   {Theory of operator algebras. II.}
   {Springer-Verlag, Berlin, 2003}

\bibitem{terp} \bibpaper{M. Terp}
   {Interpolation spaces between a von {N}eumann algebra and its predual}
   {J. Operator Theory}{8}{1982}{327--360}

\bibitem{uygul} \textsc{F. Uygul},
   `A representation theorem for completely contractive dual Banach algebras',
   preprint, see arXiv:math/0702200v1 [math.FA]

\bibitem{wendel} \bibpaper{J.\,G. Wendel}
   {Left centralizers and isomorphisms of group algebras}
   {Pacific J. Math.}{2}{1952}{251--261}

\bibitem{xu} \bibpaper{Q. Xu}{Interpolation of operator spaces}
   {J. Funct. Anal.}{139}{1996}{500--539}

\bibitem{young} \bibpaper{N. Young}
   {Periodicity of functionals and representations of normed
              algebras on reflexive spaces}
   {Proc. Edinburgh Math. Soc.}{20}{1976/77}{99--120}

\end{thebibliography}
\end{document}